\journalname{Numerical Algorithms}
\newcommand{\diag}{\ensuremath\mathop{\mathrm{diag}}}
\newcommand{\rank}{\ensuremath\mathop{\mathrm{rank}}}
\newcommand{\sech}{\ensuremath\mathop{\mathrm{sech}}}
\newcommand{\sign}{\ensuremath\mathop{\mathrm{sign}}}
\newcommand{\off}{\ensuremath\mathop{\mathrm{off}_F^2}}
\newcommand{\fma}{\ensuremath\mathop{\mathtt{fma}}}
\newcommand{\RE}{\ensuremath\mathop{\mathrm{Re}}}
\newcommand{\IM}{\ensuremath\mathop{\mathrm{Im}}}
\begin{document}
\title{A Kogbetliantz-type algorithm for the hyperbolic SVD}
\author{Vedran Novakovi\'{c}\and Sanja Singer}
\institute{%
  Vedran Novakovi\'{c} (corresponding author)\at
  completed a major part of this research as a collaborator on the MFBDA project, 10000 Zagreb, Croatia
  (\,\includegraphics[height=2ex,bb=40 52 296 308]{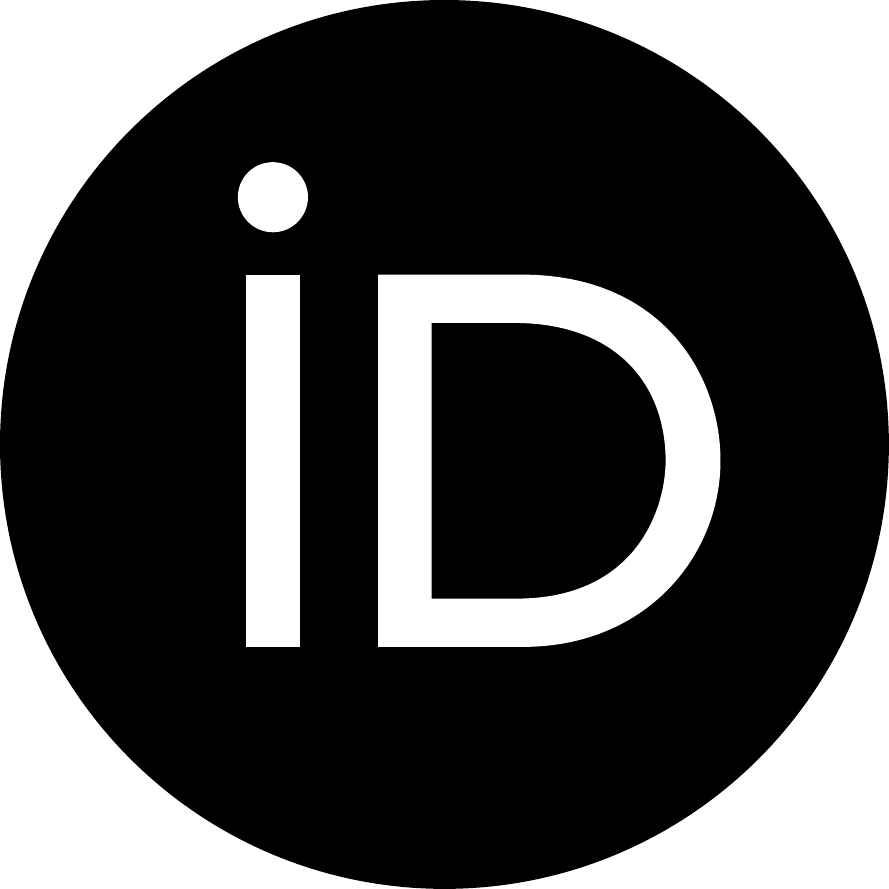}\,\href{https://orcid.org/0000-0003-2964-9674}{\texttt{0000-0003-2964-9674}}\,)\\
  \email{venovako@venovako.eu}%
  \and
  Sanja Singer$\null^{\dagger}$\at
  University of Zagreb, Faculty of Mechanical Engineering and Naval Architecture,
  Ivana Lu\v{c}i\'{c}a 5, 10000 Zagreb, Croatia
  (\,\includegraphics[height=2ex,bb=40 52 296 308]{orcid.pdf}\,\href{https://orcid.org/0000-0002-4358-1840}{\texttt{0000-0002-4358-1840}}\,)
}
\dedication{This work is dedicated to the memory of Sanja Singer.}
\date{Received: date / Accepted: date}
\maketitle
\begin{abstract}
  In this paper a two-sided, parallel Kogbetliantz-type algorithm for
  the hyperbolic singular value decomposition (HSVD) of real and
  complex square matrices is developed, with a single assumption that
  the input matrix, of order $n$, admits such a decomposition into the
  product of a unitary, a non-negative diagonal, and a $J$-unitary
  matrix, where $J$ is a given diagonal matrix of positive and
  negative signs.  When $J=\pm I$, the proposed algorithm computes the
  ordinary SVD\@.

  \looseness=-1
  The paper's most important contribution---a derivation of formulas
  for the HSVD of $2\times 2$ matrices---is presented first, followed
  by the details of their implementation in floating-point arithmetic.
  Next, the effects of the hyperbolic transformations on the columns
  of the iteration matrix are discussed.  These effects then guide a
  redesign of the dynamic pivot ordering, being already a
  well-established pivot strategy for the ordinary Kogbetliantz
  algorithm, for the general, $n\times n$ HSVD\@.  A heuristic but
  sound convergence criterion is then proposed, which contributes to
  high accuracy demonstrated in the numerical testing results.  Such a
  $J$-Kogbetliantz algorithm as presented here is intrinsically slow,
  but is nevertheless usable for matrices of small orders.
  \keywords{hyperbolic singular value decomposition\and Kogbetliantz algorithm\and Hermitian eigenproblem\and OpenMP multicore parallelization}
  \subclass{65F15\and 65Y05\and 15A18}
\end{abstract}
\section{Introduction}\label{s:1}
The Kogbetliantz algorithm~\cite{Kogbetliantz-55} is the oldest
effective method discovered that computes the singular value
decomposition (SVD) of a square matrix $G$ as $G=U\Sigma V^{\ast}$,
where $U$ and $V$ are unitary matrices, while $\Sigma$ is a diagonal
matrix with the non-negative diagonal elements, called the singular
values, that are usually ordered non-increasingly, i.e.,
$\Sigma=\diag(\sigma_1,\ldots,\sigma_n)$ and
$\sigma_k\ge\sigma_{\ell}\ge 0$, for $1\le k<\ell\le n$, with
$n$ being the matrix order of $G$, $U$, $\Sigma$, and $V$.

In this paper a Kogbetliantz-type algorithm for the hyperbolic
singular value decomposition~\cite{Onn-Steinhardt-Bojanczyk-91} (the
HSVD in short) is developed and called the $J$-Kogbetliantz algorithm
in the following.  Since the Kogbetliantz-type algorithms operate only
on square matrices, Definition~\ref{d:1.1} of the HSVD is restricted
to such a case.

\begin{definition}\label{d:1.1}
  Let $J$ and $G$ be two square matrices of order $n$, such that $J$
  is a diagonal matrix of signs, $J=\diag(\pm 1)$, and $G$ is a matrix
  over $\mathbb{F}\in\{\mathbb{R},\mathbb{C}\}$ with
  $\rank(G J G^{\ast})=\rank(G)$.  A decomposition of $G$,
  \begin{displaymath}
    G=U\Sigma V^{-1},
  \end{displaymath}
  such that $U$ is a unitary matrix over $\mathbb{F}$, $V$ is a
  $J$-unitary matrix over $\mathbb{F}$ with respect to $J$ (i.e.,
  $V^{\ast} J V = J$, so $V$ is hypernormal in the terminology
  of~\cite{Bojanczyk-Onn-Steinhardt-93}), and $\Sigma$ is a real
  diagonal matrix with a non-negative diagonal, is called the
  \emph{hyperbolic singular value decomposition} of $G$.  The diagonal
  elements of $\Sigma$ are called the
  \emph{hyperbolic singular values} of $G$, which are assumed to be
  ordered non-increasingly (non-decreasingly) in any, not necessarily
  contiguous, range of diagonal elements of $\Sigma$ for which the
  corresponding range of diagonal elements of $J$ contains only
  positive (negative) signs.
\end{definition}
When $J=\pm I_n$ the HSVD becomes the ordinary SVD, with $V$ unitary,
and the $J$-Kogbetliantz algorithm reduces to the ordinary
Kogbetliantz algorithm for the SVD.

In Definition~\ref{d:1.1} the assumption that
$\rank(G J G^{\ast})=\rank(G)$ ensures~\cite{Zha-96} that the HSVD of
$G$ exists, with a diagonal $\Sigma$.  If the assumption does not
hold, or if $G$ is rectangular, for the $J$-Kogbetliantz algorithm $G$
should be preprocessed by, e.g., the $J$-URV
factorization~\cite{SingerSanja-06,Singer-DiNapoli-Novakovic-Caklovic-20}
to a square matrix $G_0$ of order $n_0\le n$, such that
$\rank(G_0^{} J_0^{} G_0^{\ast})=\rank(G_0^{})$, i.e.,
$G=U_0^{\ast} \widetilde{G}_0^{} V_0^{}$, where $U_0$ is unitary, $V_0$
is $J$-unitary, $J_0=\diag(\pm 1)$ is of order $n_0$, and
$G_0=\widetilde{G}_0(1:n_0,1:n_0)$.  Otherwise, let $n_0=n$, $G_0=G$,
$V_0=U_0=I_n$, and $J_0=J$.

Besides an obvious application as the main part of a method for
solving a Hermitian indefinite
eigenproblem~\cite{Singer-et-al-12a,Singer-et-al-12b}, the HSVD has
also been used in various signal processing
applications~\cite{Onn-Steinhardt-Bojanczyk-91}, and recently in a
modified approach to the Kalman filtering, especially in the
ill-conditioned
case~\cite{Kulikov-Kulikova-20,Kulikova-19a,Kulikova-19b}.  The HSVD
can be efficiently and accurately computed by the \emph{one-sided}
blocked Jacobi-type
algorithms~\cite{Hari-SingerSanja-SingerSasa-10,Hari-SingerSanja-SingerSasa-14},
even on the massively parallel architectures~\cite{Novakovic-15}.
A \emph{two-sided}, Kogbetliantz-type method proposed here is not
intended to outperform the existing one-sided HSVD algorithms.
Instead, it is meant to showcase the tools (e.g., a careful dynamic
parallel ordering and a sound convergence criterion) designed to cope
with the perils of non-unitary transformations, that empirically
happen to be a greater risk for numerical stability of the two-sided
methods---here, for the hyperbolic, but possibly also applicable to
the more general, orthosymmetric~\cite{Mackey-Mackey-Tisseur-05}
SVD---than for that of the one-sided algorithms.

The ordinary Kogbetliantz algorithm usually transforms an upper
triangular matrix $R$ (see, e.g.,~\cite{Hari-Veselic-87}), resulting
from the preprocessing of $G$ by the QR factorization, in order to
simplify the transformations and lower their number.  If a particular
cyclic pivot ordering is applied, at the end of the first cycle a
lower triangular matrix is obtained, while after the subsequent cycle
the iteration matrix becomes upper triangular again.  Unfortunately, a
simple generalization of the Kogbetliantz algorithm to a hyperbolic
one (by introducing the hyperbolic transformations but leaving the
pivot strategy intact) can fail, since even a single hyperbolic
transformation from the right, with a sufficiently high condition
number, can cause an excessive growth of the off-diagonal elements of
the iteration matrix and ruin the convergence of the algorithm.
A different, dynamic (data-dependent) pivot strategy is therefore
needed to keep the sometimes unavoidable growth of the off-diagonal
norm in check.  As a consequence, the iteration matrix cannot be
assumed to have or retain any particular structure.

The $J$-Kogbetliantz algorithm computes the HSVD of $G_0$ in a
sequence of transformations, infinite in general but cut off to a
finite leading part when computing in machine precision and a
convergence criterion is met, as
\begin{displaymath}
  U^{\ast}G_0 V\approx\Sigma,\quad
  U^{\ast}=U_N^{\ast}U_{N-1}^{\ast}\cdots U_1^{\ast},\quad
  V=V_1^{}V_2^{}\cdots V_N^{},
\end{displaymath}
where, for each $k$, $1\le k\le N$, $U_k^{\ast}$ and $V_k^{}$ are the
embeddings of the $2\times 2$ transformations $\widehat{U}_k^{\ast}$
and $\widehat{V}_k^{}$, respectively, into $I_{n_0}^{}$, that are
applied to the iteration matrix $G_{k-1}^{}$, forming a sequence of
the iteration matrices
\begin{displaymath}
  G_1^{},G_2^{},\ldots,G_N^{}\approx\Sigma,
\end{displaymath}
where $G_k^{}=U_k^{\ast}G_{k-1}^{}V_k^{}$.  If $G_0$ is already in the
form required of $\Sigma$, then $V=U=I_{n_0}^{}$, $\Sigma=G_0^{}$, and
no transformations take place.  Else, $U_k^{\ast}$ is unitary
(orthogonal in the real case), while $V_k^{}$ is $J_0^{}$-unitary
($J_0^{}$-orthogonal in the real case).  Each $\widehat{U}_k^{\ast}$
is unitary (orthogonal), and $\widehat{V}_k^{}$ is
$\widehat{J}_k^{}$-unitary ($\widehat{J}_k^{}$-orthogonal), where
$\widehat{J}_k^{}=\diag(J_0^{}(p_k,p_k),J_0^{}(q_k,q_k))$ for the
$k$th pivot index pair $(p_k,q_k)$, $1\le p_k<q_k\le n_0$.  The
decomposition is finalized by forming
$U=(U^{\ast})^{\ast}=U_1^{}U_2^{}\cdots U_N^{}$, while by multiplying
$V^{\ast}J_0^{}V=J_0^{}$ from the left by $J_0^{}$, and noting that
$J_0^2=I_{n_0}^{}$, it follows that $V^{-1}=J_0^{}V^{\ast}J_0^{}$.

In the $k$th step the elements $(1,1)$, $(2,1)$, $(1,2)$, and $(2,2)$
of $\widehat{U}_k^{\ast}$ (and $\widehat{V}_k^{}$) are embedded into
$U_k^{\ast}$ (and $V_k^{}$) at the pivot positions $(p_k,p_k)$,
$(q_k,p_k)$, $(p_k,q_k)$, and $(q_k,q_k)$, respectively, where the
pivot indices are chosen such that for the pivot matrix
$\widehat{G}_{k-1}^{}$,
\begin{equation}
  \widehat{G}_{k-1}^{}=
  \begin{bmatrix}
    G_{k-1}^{}(p_k,p_k) & G_{k-1}^{}(p_k,q_k)\\
    G_{k-1}^{}(q_k,p_k) & G_{k-1}^{}(q_k,q_k)
  \end{bmatrix},
  \label{e:1.1}
\end{equation}
holds at least one of
\begin{displaymath}
  \widehat{G}_{k-1}^{}(2,1)\ne 0,\quad
  \widehat{G}_{k-1}^{}(1,2)\ne 0,\quad
  \widehat{G}_{k-1}^{}(1,1)\notin\mathbb{R}_0^+,\quad
  \widehat{G}_{k-1}^{}(2,2)\notin\mathbb{R}_0^+.
\end{displaymath}
When $\widehat{J}_k^{}(1,1)=\widehat{J}_k^{}(2,2)$ (such a case is
called \emph{trigonometric} in the following, while the other case is
called \emph{hyperbolic}), $\widehat{G}_{k-1}^{}$ is also a
transformation candidate if it is diagonal, with the non-negative
diagonal elements such that
$\widehat{G}_{k-1}^{}(1,1)<\widehat{G}_{k-1}^{}(2,2)$ if
$\widehat{J}_k^{}=I_2^{}$, or
$\widehat{G}_{k-1}^{}(1,1)>\widehat{G}_{k-1}^{}(2,2)$ if
$\widehat{J}_k^{}=-I_2{}$.  This is summarized in
Definition~\ref{d:1.2}.

\begin{definition}\label{d:1.2}
  In a step $k\ge 1$, let $p_k$ and $q_k$, $1\le p_k<q_k\le n_0$, be
  such indices for which $\widehat{G}_{k-1}$ from~\eqref{e:1.1} does
  \emph{not} have a form required of a matrix of hyperbolic singular
  values, as specified in Definition~\ref{d:1.1}.  Then,
  $\widehat{G}_{k-1}$ is called a transformation candidate or a pivot
  (sub)matrix.  If such indices do not exist, there are no
  transformation candidates in the $k$th step.  If more than one index
  pair satisfies the defining condition, one pivot index pair is
  selected according to the chosen pivot strategy.
\end{definition}

If the step index is omitted, then for the pivot index pair $(p,q)$
the transformation matrices ($\widehat{U}^{\ast}$ and $\widehat{V}$),
the pivot matrix ($\widehat{G}$), and the corresponding matrices of
signs ($\widehat{J}$) and hyperbolic singular values
($\widehat{\Sigma}$) can be written as
\begin{displaymath}
  \renewcommand{\arraycolsep}{2pt}
  \widehat{U}^{\ast}=
  \begin{bmatrix}
    \bar{u}_{pp} & \bar{u}_{qp}\\
    \bar{u}_{pq} & \bar{u}_{qq}
  \end{bmatrix}\!,\;
  \widehat{V}=
  \begin{bmatrix}
    v_{pp} & v_{pq}\\
    v_{qp} & v_{qq}
  \end{bmatrix}\!,\;
  \widehat{G}=
  \begin{bmatrix}
    g_{pp} & g_{pq}\\
    g_{qp} & g_{qq}
  \end{bmatrix}\!,\;
  \widehat{J}=
  \begin{bmatrix}
    j_{pp} & 0\\
    0 & j_{qq}
  \end{bmatrix}\!;\;
  \widehat{\Sigma}=
  \begin{bmatrix}
    \sigma_{pp} & 0\\
    0 & \sigma_{qq}
  \end{bmatrix}\!.
\end{displaymath}
With $\widehat{G}$ given, $\widehat{U}^{\ast}$ and $\widehat{V}$ are
sought for, such that
$\widehat{U}^{\ast}\widehat{G}\widehat{V}=\widehat{\Sigma}$, with
$\sigma_{pp}$ and $\sigma_{qq}$ being the hyperbolic singular values
of $\widehat{G}$, and
$\widehat{V}^{\ast}\widehat{J}\widehat{V}=\widehat{J}$.

If there are no transformation candidates in the $k$th step, or if the
convergence criterion is satisfied, the algorithm stops successfully,
with (an approximation of) the HSVD of $G$ found.  Otherwise,
$\widehat{U}_k^{\ast}$ and $\widehat{V}_k^{}$ are computed for one,
suitably chosen transformation candidate of $G_{k-1}^{}$, and applied
to the pivot rows $p_k$ and $q_k$ from the left
($\widehat{U}_k^{\ast}$) and the pivot columns $p_k$ and $q_k$ from
the right ($\widehat{V}_k^{}$), in that order for determinacy, to form
$G_k^{}$.  The process is then repeated in the step $k+1$.

Note that several, but at most $\lfloor n_0/2\rfloor$ successive steps
can be grouped to form a multi-step $\mathbf{k}$, where
$0\le|\mathbf{k}|\le\lfloor n_0/2\rfloor$ is the chosen length of
$\mathbf{k}$, if and only if $\{p_k,q_k\}\cap\{p_l,q_l\}=\emptyset$
for all $k\ne l$ such that $\{k,l\}\subseteq\mathbf{k}$, i.e., all
basic steps within a multi-step can be performed in \emph{parallel}
(with $t\ge 1$ parallel tasks at hand).  The number of basic steps may
vary from one multi-step to another if, e.g., not enough
transformation candidates are available, and may reach zero when no
further transformations are possible.  With $n_0=6$, e.g., the
multi-step $\mathbf{1}$ might be $\{1,2,3\}$, $\mathbf{2}$ might be
$\{4,5\}$, $\mathbf{3}$ might be $\{6,7,8\}$, continuing until some
$\mathbf{N}$ at which the algorithm halts.

The parallel application of the $2\times 2$ transformations has to
take into account that all transformations from one side (e.g., the
row transformations from the left) have to precede any transformation
from the other side (e.g., the column transformations from the right).
However, all transformations from the same side can proceed
concurrently, and the choice of the first side to be transformed is
arbitrary.

To fully describe the $J$-Kogbetliantz algorithm, it therefore
suffices to specify:
\begin{compactenum}
  \item a method for computing the HSVD of the pivot matrices of order
    two,
  \item the details of performing the row and the column
    transformations,
  \item a pivot strategy that selects the transformation candidate(s)
    in a (multi-)step, and
  \item a convergence criterion that halts the execution.
\end{compactenum}
The above list guides the organization of this paper as follows.  The
first item is covered in section~\ref{s:2}, the second one in
section~\ref{s:3}, the third one in section~\ref{s:4}, and the last
one in section~\ref{s:5}, containing an overview of the algorithm.
The numerical testing results are summarized in section~\ref{s:6}, and
the paper is concluded with some remarks on the future work in
section~\ref{s:7}.  Appendix~\ref{s:A} contains several technical
proofs.
\section{Computing the HSVD of matrices of order two}\label{s:2}
In the ordinary Kogbetliantz algorithm it is preferred that the pivot
matrices throughout the process remain (upper or lower)
triangular~\cite{Charlier-Vanbegin-VanDooren-87,Hari-Veselic-87} under
a cyclic pivot strategy: a serial (e.g., the row or the column cyclic,
with $G_0$ triangular) or a parallel one (e.g., the modulus strategy,
with $G_0$ preprocessed into the butterfly
form~\cite{Hari-Zadelj-Martic-07})---a fact relied upon for a simple
and accurate computation of the transformation
parameters~\cite{Hari-Matejas-09,Matejas-Hari-10,Matejas-Hari-15}.
However, the $J$-Kogbetliantz algorithm employs a strategy that has no
periodic (cyclic) pattern, so no particular form of the pivot matrices
can be guaranteed and no special form of $G_0$ is presumed.  Depending
on $\widehat{J}$ and its own structure, a transformation candidate
$\widehat{G}$ might have to be preprocessed into a real, triangular
form with non-negative elements that is suitable for a novel,
numerically stable computation of the transformation matrices
$\widehat{U}^{\ast}$ and $\widehat{V}$ and the hyperbolic singular
values in $\widehat{\Sigma}$.
\subsection{The $\widehat{J}$-UTV factorization of $\widehat{G}$}\label{ss:2.1}
The preprocessing of $\widehat{G}$ starts with checking if
$\widehat{G}$ is diagonal, because this information will affect the
convergence criterion (see subsection~\ref{ss:5.1}) and can simplify
an optimized implementation of the $2\times 2$ HSVD\@.  Then,
$\widehat{G}$ is transformed into a real, triangular matrix
$\widehat{T}$ with non-negative elements by the $\widehat{J}$-UTV
factorization that preserves the hyperbolic singular values.  The
$\widehat{J}$-UTV factorization is similar to the
URV~\cite{Stewart-92} factorization, with the form
\begin{displaymath}
  \check{U}^{\ast}\widehat{G}\check{V}=\widehat{T},
\end{displaymath}
where $\check{U}$ is unitary, $\check{V}$ is $\widehat{J}$-unitary,
and $\widehat{T}$ is either upper or lower triangular with real,
non-negative elements.  Moreover, $\widehat{T}$ is upper triangular in
the trigonometric case, and
$\hat{t}_{11}^2\ge\hat{t}_{12}^2+\hat{t}_{22}^2$.  In the
hyperbolic case $\widehat{T}$ is either upper triangular, as
described, or lower triangular, with
$\hat{t}_{22}^2\ge\hat{t}_{21}^2+\hat{t}_{11}^2$, depending on
whether the first or the second column of $\widehat{G}$ has a greater
Frobenius norm, respectively.  The $2\times 2$ $\widehat{J}$-UTV
factorization always exists and is uniquely determined, as
demonstrated by Algorithm~\ref{a:2.1}, which, given $\widehat{G}$,
operates on a scaled matrix $\widehat{G}_0=2^s\widehat{G}$ to avoid
the possibility of floating-point overflows.  Determination of $s$ is
described in subsection~\ref{sss:2.1.1}, with the only assumption that
all (components of) the elements of $\widehat{G}$ are finite.  Assume
$\arg(0)=0$ for simplicity, and let $P_2$ be the $2\times 2$
permutation matrix
$\left[\begin{smallmatrix}0 & 1\\1 & 0\end{smallmatrix}\right]$.  Let
Boolean flags---those being only true ($\top$) or false ($\bot$)---be
denoted by small caps (e.g., \textsc{d}, \textsc{h}, and \textsc{c},
indicating whether $\widehat{G}_0$ is diagonal, whether the hyperbolic
or the trigonometric case is considered, and whether the columns of
$\widehat{G}_0$ have been swapped, respectively).

\begin{algorithm}[hbtp]
  \SetKwInput{Note}{Note}
  \leIf(\tcp*[f]{specifically, $\text{\sc d}=\top$ if $\widehat{G}_0=\mathbf{0}$}){$\widehat{G}_0$ {\rm is diagonal}}{$\text{\sc d}=\top$}{$\text{\sc d}=\bot$}
  \leIf(\tcp*[f]{$\bot$: trigonometric, \hskip-1pt$\top$: hyperbolic case}){$\widehat{J}=I_2$ {\rm or} $\widehat{J}=-I_2$}{$\text{\sc h}=\bot$}{$\text{\sc h}=\top$}
  \tcp{\hfill STAGE~$1$: column pivoting of $\widehat{G}_0$\hfill}
  \leIf{$\left\|\left[\begin{smallmatrix}(\widehat{G}_0)_{11}\\(\widehat{G}_0)_{21}\end{smallmatrix}\right]\right\|_F<\left\|\left[\begin{smallmatrix}(\widehat{G}_0)_{12}\\(\widehat{G}_0)_{22}\end{smallmatrix}\right]\right\|_F$}{$\text{\sc c}=\top$ and $\check{V}_1^{}=P_2^{}$}{$\text{\sc c}=\bot$ and $\check{V}_1^{}=I_2^{}$}
  \tcp{if $\text{\sc h}\wedge\text{\sc c}$: $\check{V}_1$ is \emph{not} $\widehat{J}$-unitary but it is canceled by $\check{V}_5$ in stage $5\text{\sc l}$}
  $\widehat{G}_1=\widehat{G}_0\check{V}_1$,\quad$\check{V}:=\check{V}_1$\tcp*[r]{start accumulating $\check{V}$}
  \tcp{\hfill STAGE~$2$: make the first column of $\widehat{G}_1$ real and non-negative\hfill}
  $\check{U}_2^{\ast}=\begin{bmatrix}e^{-\arg{(\widehat{G}_1)_{11}}}&0\\0&e^{-\arg{(\widehat{G}_1)_{21}}}\end{bmatrix}$\tcp*[r]{applied to get $(\widehat{G}_2^{})_{11}^{}=|(\widehat{G}_1^{})_{11}^{}|$ \&\ $(\widehat{G}_2^{})_{21}^{}=|(\widehat{G}_1^{})_{21}^{}|$}
  $\widehat{G}_2^{}=\check{U}_2^{\ast}\widehat{G}_1^{}$,\quad$\check{U}^{\ast}:=\check{U}_2^{\ast}$\tcp*[r]{start accumulating $\check{U}^{\ast}$}
  \tcp{\hfill STAGE~$3$: row pivoting of $\widehat{G}_2$\hfill}
  \leIf{$(\widehat{G}_2^{})_{11}^{}<(\widehat{G}_2^{})_{21}^{}$}{$\text{\sc r}=\top$ and $\check{U}_3^{\ast}=P_2^{}$}{$\text{\sc r}=\bot$ and $\check{U}_3^{\ast}=I_2^{}$}
  $\widehat{G}_3^{}=\check{U}_3^{\ast}\widehat{G}_2^{}$,\quad$\check{U}^{\ast}:=\check{U}_3^{\ast}\check{U}^{\ast}$\tcp*[r]{$(\widehat{G}_3^{})_{11}^{}\ge(\widehat{G}_3^{})_{21}^{}\ge 0$}
  \tcp{\hfill STAGE~$4$: the QR factorization of $\widehat{G}_3$\hfill}
  \eIf(\tcp*[f]{not {\sc d} implies $(\widehat{G}_3^{})_{11}^{}>0$}){$\neg\text{\sc d}$}{%
    $\tan\phi=(\widehat{G}_3^{})_{21}^{}/(\widehat{G}_3^{})_{11}^{}$,\quad$\cos\phi=1/\sqrt{1+\tan^2\phi}$\tcp*[r]{$0\le\phi\le\pi/4$}
    $\check{U}_4^{\ast}=\cos\phi\begin{bmatrix}1 & \tan\phi\\-\tan\phi & 1\end{bmatrix}$\tcp*[r]{the Givens rotation $\check{U}_4^{\ast}$ is $Q^{\ast}$ in the QR}
  }(\tcp*[f]{the QR factorization of a diagonal matrix is trivial}){%
    $\check{U}_4^{\ast}=I_2^{}$\tcp*[r]{$\phi=0$}
  }
  $\widehat{G}_4^{}=\check{U}_4^{\ast}\widehat{G}_3^{}$,\quad$\check{U}^{\ast}:=\check{U}_4^{\ast}$\tcp*[r]{$\widehat{G}_4^{}$ is upper triangular}
  \eIf(\tcp*[f]{this \emph{pipeline} ends with $\widehat{T}$ lower triangular}){$\text{\sc h}\wedge\text{\sc c}$}{%
    \tcp{\hfill STAGE~$5\text{\sc l}$: swap the columns of $\widehat{G}_4$ to cancel $\check{V}_1$\hfill}
    $\check{V}_5^{}=P_2^{}$,\quad$\widehat{G}_5^{}=\widehat{G}_4^{}\check{V}_5^{}$,\quad$\check{V}:=I_2^{}(=\check{V}\check{V}_5^{})$\tcp*[r]{since $P_2^{}=P_2^{\ast}$}
    \tcp{\hfill STAGE~$6\text{\sc l}$: swap the rows of $\widehat{G}_5$\hfill}
    $\check{U}_6^{\ast}=P_2^{}$,\quad$\widehat{G}_6^{}=\check{U}_6^{\ast}\widehat{G}_5^{}$,\quad$\check{U}^{\ast}:=\check{U}_6^{\ast}\check{U}^{\ast}$\tcp*[r]{$\widehat{G}_6^{}$ is lower triangular}
    \tcp{\hfill STAGE~$7\text{\sc l}$: make $(\widehat{G}_6^{})_{21}^{}$ real and non-negative\hfill}
    $\check{V}_7=\begin{bmatrix}e^{-\arg{(\widehat{G}_6)_{21}}}&0\\0&1\end{bmatrix}$\tcp*[r]{applied to get $(\widehat{G}_7)_{21}=|(\widehat{G}_6)_{21}|$}
    $\widehat{G}_7=\widehat{G}_6\check{V}_7$,\quad$\check{V}:=\check{V}_7$\tcp*[r]{$\check{V}_7$ is \emph{both} unitary and $\widehat{J}$-unitary}
    \tcp{\hfill STAGE~$8\text{\sc l}$: make $(\widehat{G}_7^{})_{11}^{}$ real and non-negative\hfill}
    $\check{U}_8^{\ast}=\begin{bmatrix}e^{-\arg{(\widehat{G}_7)_{11}}}&0\\0&1\end{bmatrix}$\tcp*[r]{applied to get $(\widehat{G}_8^{})_{11}^{}=|(\widehat{G}_7^{})_{11}^{}|$}
    $\widehat{G}_8^{}=\check{U}_8^{\ast}\widehat{G}_7^{}$,\quad$\check{U}^{\ast}:=\check{U}_8^{\ast}\check{U}^{\ast}$\tcp*[r]{$\widehat{T}=\widehat{G}_8^{}$}
  }(\tcp*[f]{this \emph{pipeline} ends with $\widehat{T}$ upper triangular}){%
    \tcp{\hfill STAGE~$5\text{\sc u}$: make $(\widehat{G}_4^{})_{12}^{}$ real and non-negative\hfill}
    $\check{V}_5=\begin{bmatrix}1&0\\0&e^{-\arg{(\widehat{G}_4)_{12}}}\end{bmatrix}$\tcp*[r]{applied to get $(\widehat{G}_5)_{12}=|(\widehat{G}_4)_{12}|$}
    $\widehat{G}_5^{}=\widehat{G}_4^{}\check{V}_5^{}$,\quad$\check{V}:=\check{V}\check{V}_5^{}$\tcp*[r]{$\check{V}_5$ is \emph{both} unitary and $\widehat{J}$-unitary}
    \tcp{\hfill STAGE~$6\text{\sc u}$: make $(\widehat{G}_5^{})_{22}^{}$ real and non-negative\hfill}
    $\check{U}_6^{\ast}=\begin{bmatrix}1&0\\0&e^{-\arg{(\widehat{G}_5)_{22}}}\end{bmatrix}$\tcp*[r]{applied to get $(\widehat{G}_6^{})_{22}^{}=|(\widehat{G}_5^{})_{22}^{}|$}
    $\widehat{G}_6^{}=\check{U}_6^{\ast}\widehat{G}_5^{}$,\quad$\check{U}^{\ast}:=\check{U}_6^{\ast}\check{U}^{\ast}$\tcp*[r]{$\widehat{T}=\widehat{G}_6^{}$}
  }
  \Note{All transitions $\widehat{G}_l\to\widehat{G}_{l+1}$ actually happen in-place by transforming the \emph{working matrix}.}
  \caption{The $\widehat{J}$-UTV factorization of a complex $2\times 2$ matrix $\widehat{G}_0$.}
  \label{a:2.1}
\end{algorithm}

When $\widehat{T}$ is upper triangular, Algorithm~\ref{a:2.1} is a
refinement of the URV factorization, as described in detail
in~\cite{Novakovic-20} for the trigonometric case, where
$R=\widehat{T}$ is real and non-negative, with a prescribed relation
among the magnitudes of its elements.  The $2\times 2$
$\widehat{J}$-UTV factorization guarantees that $\check{V}$ is
\emph{both} unitary and $\widehat{J}$-unitary in all cases.  When
$\widehat{G}$ is real, a multiplication of an element $\hat{g}$ of the
working matrix by $e^{-\arg{\hat{g}}}$ amounts to a multiplication by
$\sign{\hat{g}}$, without any rounding error.

In brief, Algorithm~\ref{a:2.1} consists of two possible pipelines,
having in common the first four stages.  The longer pipeline has
eight, and the shorter one six stages in total.

In a stage $\mathfrak{s}\ge 1$, the working matrix and either
$\check{U}^{\ast}$ or $\check{V}$ are transformed in place, either by
a left multiplication by $\widehat{U}_{\mathfrak{s}}^{\ast}$, or by a
right multiplication by $\widehat{V}_{\mathfrak{s}}^{}$,
respectively.  The working matrix thus progresses from the state
$\widehat{G}_{\mathfrak{s}-1}$ to $\widehat{G}_{\mathfrak{s}}$, until
it is eventually reduced to $\widehat{T}$, at which point
$\check{U}^{\ast}$ and $\check{V}$ are considered final, i.e., the
$\widehat{J}$-UTV factorization is computed.  Initially,
$\check{U}^{\ast}$ and $\check{V}$ are set to identities, and
\textsc{d} and \textsc{h} are determined.

In stage~1, if the column pivoting of $\widehat{G}_0^{}$ is needed
(i.e., its first column is of lower Frobenius norm than its second),
\textsc{c} is set to $\top$ and $\check{V}_1^{}$ to $P_2^{}$, else
they become $\bot$ and $I_2^{}$, respectively.  Note that
$\check{V}_1^{}$ is not $\widehat{J}$-unitary if \textsc{h} and
\textsc{c} are $\top$, but in that case $\check{V}_1^{}$ will be
cancelled by $P_2^{}=P_2^{\ast}$, with no intervening right
transformations, in stage~5\textsc{l}.

In stage~2, the first column of $\widehat{G}_1^{}$ is made real and
non-negative by $\check{U}_2^{\ast}$, effectively setting the elements
of the first column of $\widehat{G}_2^{}$ to the magnitudes of those
from $\widehat{G}_1^{}$.  Then, it is possible to compare them in
stage~3.  If the first is less than the second, the rows of
$\widehat{G}_2^{}$ are swapped by $\check{U}_3^{\ast}$.  In stage~4
(the final one shared among the two pipelines) the QR factorization of
$\widehat{G}_3^{}$ is performed.  If $\widehat{G}_0^{}$ was diagonal,
$\widehat{G}_3^{}$ will come out as such from the previous stage, and
the factorization is trivial.  Else, the \emph{real} Givens rotation
$\check{U}_4^{\ast}$ (i.e., $Q^{\ast}$) is computed from the elements
of the first column of $\widehat{G}_3^{}$.  Due to the row pivoting,
the rotation's angle $\phi$ lies between $0$ and $\pi/4$.  Now, $R$
from the factorization, i.e., $\widehat{G}_4^{}$, is upper triangular,
with its second column possibly complex, while
$(\widehat{G}_4^{})_{11}^{}$ is by construction real and of the
largest magnitude in $\widehat{G}_4^{}$.

The remaining stages of the longer pipeline, i.e., the one in which
\textsc{h} and \textsc{c} are $\top$, are described first.  In
stage~5\textsc{l} $\check{V}_1^{}=P_2^{}$ is cancelled by
$\check{V}_5^{}=P_2^{}$, i.e., the columns of $\widehat{G}_4^{}$ are
swapped to obtain $\widehat{G}_5^{}$, which in stage~6\textsc{l} gets
its rows swapped by $\check{U}_6^{\ast}$ to become the lower
triangular $\widehat{G}_6^{}$.   Stage~5\textsc{l} is necessary to
make the current $\check{V}$ $\widehat{J}$-unitary (identity) again.
Stage~6\textsc{l} brings the working matrix into a more ``familiar''
(i.e., lower triangular) form.  The working matrix in this pipeline
remains lower triangular until the end.  To emphasize that,
``\textsc{l}'' is appended to the stage identifiers.  To make the
working matrix real, $\check{V}_7^{}$ (obviously unitary but also
$\widehat{J}$-unitary, since
$\check{V}_7^{\ast}\widehat{J}\check{V}_7^{}=\widehat{J}$, where
$\widehat{J}$ has two opposite signs on its diagonal) is applied to it
in stage~7\textsc{l}, to turn $(\widehat{G}_6^{})_{21}^{}$ into its
magnitude.  Finally, in stage~8\textsc{l},
$(\widehat{G}_7^{})_{11}^{}$ is turned into its magnitude by
$\check{U}_8^{\ast}$.

The shorter pipeline, followed when \textsc{h} or \textsc{c} are
$\bot$, keeps the working matrix upper triangular, and thus the stage
identifiers have ``\textsc{u}'' appended to them.  Stage~5\textsc{u}
turns $(\widehat{G}_4^{})_{12}^{}$ into its magnitude by (similarly as
above, both unitary and $\widehat{J}$-unitary) $\check{V}_5^{}$, while
stage~6\textsc{u} completes the pipeline by turning
$(\widehat{G}_5^{})_{22}^{}$ into its magnitude by
$\widehat{U}_6^{\ast}$.

\looseness=-1
Figure~\ref{f:2.1} illustrates the possible progressions of a
non-diagonal complex matrix $\widehat{G}_0$ through the stages of
Algorithm~\ref{a:2.1} in both its pipelines (but only one pipeline is
taken with any given input), and as if all optional transformations
take place (the column and/or the row pivoting may not actually
happen).  A diagonal matrix admits a simplified procedure, evident
enough for its details to be omitted here for brevity.  The state of
the working matrix before the operations of each stage commence is
shown, while the transformation matrices are implied by the effects of
a particular stage.

\begin{figure}[hbt]
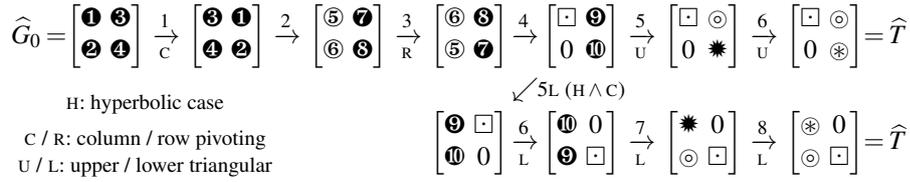
\normalsize
  \begin{displaymath}
    \begin{array}{rcccccc@{\mkern2.45mu}c@{\mkern2.45mu}ccccl}
      \widehat{G}_0\mkern-1mu=\mkern-4mu\begin{bmatrix}
        \text{\ding{182}} & \text{\ding{184}}\\
        \text{\ding{183}} & \text{\ding{185}}
      \end{bmatrix} &
      \xrightarrow[\text{\sc c}]{1} &
      \begin{bmatrix}
        \text{\ding{184}} & \text{\ding{182}}\\
        \text{\ding{185}} & \text{\ding{183}}
      \end{bmatrix} &
      \xrightarrow{2} &
      \begin{bmatrix}
        \text{\ding{176}} & \text{\ding{188}}\\
        \text{\ding{177}} & \text{\ding{189}}
      \end{bmatrix} &
      \xrightarrow[\text{\sc r}]{3} &
      \begin{bmatrix}
        \text{\ding{177}} & \text{\ding{189}}\\
        \text{\ding{176}} & \text{\ding{188}}
      \end{bmatrix} &
      \xrightarrow{4} &
      \begin{bmatrix}
        \boxdot & \text{\ding{190}}\\
        0 & \text{\ding{191}}
      \end{bmatrix} &
      \xrightarrow[\text{\sc u}]{5} &
      \begin{bmatrix}
        \boxdot & \circledcirc\\
        0 & \text{\ding{89}}
      \end{bmatrix} &
      \xrightarrow[\text{\sc u}]{6} &
      \begin{bmatrix}
        \boxdot & \circledcirc\\
        0 & \circledast
      \end{bmatrix}\mkern-4mu=\mkern-1mu\widehat{T}\\[8pt]
      \null & \null & \null & \null & \null & \null & \null & \swarrow & \mathclap{\text{\small 5{\sc l} ($\text{\sc h}\wedge\text{\sc c}$)}}\\[-4pt]
      \mathclap{\genfrac{}{}{0pt}{0}{\genfrac{}{}{0pt}{0}{\text{\small{\sc h}: hyperbolic case}}{\text{\small{\sc c} / {\sc r}: column / row pivoting}}}{\text{\small{\sc u} / {\sc l}: upper / lower triangular}}} & \null & \null & \null & \null & \null &
      \begin{bmatrix}
        \text{\ding{190}} & \boxdot\\
        \text{\ding{191}} & 0
      \end{bmatrix} &
      \xrightarrow[\text{\sc l}]{6} &
      \begin{bmatrix}
        \text{\ding{191}} & 0\\
        \text{\ding{190}} & \boxdot
      \end{bmatrix} &
      \xrightarrow[\text{\sc l}]{7} &
      \begin{bmatrix}
        \text{\ding{89}} & 0\\
        \circledcirc & \boxdot
      \end{bmatrix} &
      \xrightarrow[\text{\sc l}]{8} &
      \begin{bmatrix}
        \circledast & 0\\
        \circledcirc & \boxdot
      \end{bmatrix}\mkern-4mu=\mkern-1mu\widehat{T}
    \end{array}
  \end{displaymath}
  \caption{A schematic representation of the $\widehat{J}$-UTV
    factorization pipelines of a complex $2\times 2$ matrix
    $\widehat{G}_0$.  The upper pipeline is followed in the
    trigonometric case, or in the hyperbolic case when no column
    pivoting was performed in stage~1, resulting in an upper
    triangular $\widehat{T}$ after six transformations.   Otherwise,
    the lower pipeline is followed from stage~5 onwards, resulting in
    a lower triangular $\widehat{T}$ after eight transformations of
    the working matrix.  The symbols with a black background stand for
    the complex  elements, while those with a white background denote
    the real non-negative ones.  The circled numbers represent the
    transient values, while the framed operators are reserved for the
    final ones, such that $\boxdot^2\ge\circledcirc^2+\circledast^2$.}
  \label{f:2.1}
\end{figure}

The numerical issues of computing in the machine precision (in
Fortran) are dealt with in subsection~\ref{sss:2.1.1} for the column
norms, in subsection~\ref{sss:2.1.2} for the polar form of a complex
number, and in subsection~\ref{sss:2.1.3} for the rotation-like
transformations.  It is assumed that the floating-point arithmetic is
not trapping on an exception, that the rounding to nearest (with tie
to even) is employed, that the gradual underflow and the fused
multiply-add ($\fma$) operation are available, that
$\mathop{\mathtt{HYPOT}}(a,b)=\sqrt{a^2+b^2}$ intrinsic causes no
undue overflow, and that $\mathop{\mathtt{HYPOT}}(a,b)=0\iff a=b=0$.

Algorithm~\ref{a:2.1} is conditionally reproducible, i.e., its results
on any given input are bitwise identical in every floating-point
environment behaving as described above and employing the same
datatype, provided that the $\mathtt{HYPOT}$ intrinsic is
reproducible.
\subsubsection{Determination of the scaling parameter $s$}\label{sss:2.1.1}
For a complex number $z$, $|z|$ can overflow in floating-point
arithmetic.  The magnitude of $z$ can also pose a problem, e.g., in
computing of the norms of the columns of $\widehat{G}$ in stage~1 of
Algorithm~\ref{a:2.1} for the column pivoting in the QR
factorization.  There,
\begin{displaymath}
  \left\|\begin{bmatrix}a&b\end{bmatrix}^T\right\|_F^{}=\sqrt{|a|^2+|b|^2}=\mathop{\mathtt{HYPOT}}(\mathop{\mathtt{HYPOT}}(\RE{a},\IM{a}),\mathop{\mathtt{HYPOT}}(\RE{b},\IM{b}))=c,
\end{displaymath}
so it has to be ensured, by an appropriate prescaling of
$\widehat{G}$, that $|a|$, $|b|$, and $c$ do not overflow.  A similar
concern is addressed in the \texttt{xLARTG}
LAPACK~\cite{Anderson-et-al-99} routines, but for efficiency it is
advisable to avoid the overhead of function calls in the innermost
computational parts.  At the same time, to avoid computing with the
subnormal (components of) numbers, $\widehat{G}$ could be upscaled
when no overflow can occur.  Therefore, to efficiently mitigate as
many overflow and underflow issues as practicable, either the
following prescaling of $\widehat{G}$, and the corresponding
postscaling of the resulting $\widehat{\Sigma}$, should be employed,
or, without any scaling needed, the entire computation of the
$2\times 2$ HSVD should be performed in a floating-point datatype at
least as precise as the type of (the components of) the elements of
$\widehat{G}$, but with a wider exponent range (e.g., in the Intel's
80-bit, hardware supported extended precision datatype).

Down- and up-scalings of $\widehat{G}$ can be made (almost) exact,
without any loss of precision in the vast majority of cases, by
decreasing, or respectively increasing, the exponents of (the
components of) the elements only, i.e., by multiplying the values by
$2^s$ with $s\in\mathbb{Z}$, leaving the significands intact (except
for a subnormal value and $s<0$, when the $|s|$ lowest bits of the
significand---some of which may not be zero---perish), as explained
in~\cite[subsection~2.1]{Novakovic-20} and summarized here in Fortran
parlance.

Let
$\mathop{\eta}(a)=\mathop{\mathtt{EXPONENT}}(a)$ for a floating-point
value $a$, and let $\nu=\mathop{\eta}(\Omega)$, where
$\Omega=\mathop{\mathtt{HUGE}}(\mathtt{0D0})$ is the largest finite
\texttt{DOUBLE PRECISION} value.  For $a$ finite, let
\begin{displaymath}
  \mathop{\chi}(a)=\nu-\mathop{\eta}(\max\{|a|,\omega\})-2
\end{displaymath}
be a measure of how much, in terms of its exponent, $a$ has to be
downscaled, or how much it can be safely upscaled.  Here and in the
following, $\omega$ is the smallest non-zero positive subnormal
floating-point number\footnote{For $a=0$, $\mathop{\eta}(a)=0$ instead
of a huge negative integer, so taking $\max\{|a|,\omega\}$ filters out
such $a$.}, while
$\widehat{\omega}=\mathop{\mathtt{TINY}}(\mathtt{0D0})$ is the
smallest positive normal \texttt{DOUBLE PRECISION} value.  Two sets of
such exponent differences,
\begin{align*}
  E_{\Re}&=\{\mathop{\chi}(\RE{\hat{g}_{11}}),\mathop{\chi}(\RE{\hat{g}_{21}}),\mathop{\chi}(\RE{\hat{g}_{12}}),\mathop{\chi}(\RE{\hat{g}_{22}})\},\\
  E_{\Im}&=\{\mathop{\chi}(\IM{\hat{g}_{11}}),\mathop{\chi}(\IM{\hat{g}_{21}}),\mathop{\chi}(\IM{\hat{g}_{12}}),\mathop{\chi}(\IM{\hat{g}_{22}})\},
\end{align*}
are then formed, and the scaling parameter $s$ is taken to be the
minimum of $E_{\Re}\cup E_{\Im}$,
\begin{equation}
  s=\min\{\min{E_{\Re}},\min{E_{\Im}}\}.
  \label{e:2.1}
\end{equation}
Each component $c$ of the elements of $\widehat{G}$ is then scaled
as $c_0=\mathop{\mathtt{SCALE}}(c,s)$ to get $\widehat{G}_0$.

Backscaling of the computed hyperbolic singular values by $2^{-s}$ can
cause overflows or underflows (to the subnormal range, or even zero),
but they are unavoidable if the scaling should be transparent to the
users of the $2\times 2$ HSVD\@.  A remedy, discussed
in~\cite{Novakovic-20}, would be to keep the scales in addition to the
values to which they apply, in a separate array.  Alternatively,
computing in a wider datatype avoids both scaling and such issues, at
a cost of a slower and not easily vectorizable arithmetic.

In brief, the scaling parameter $s$ from~\eqref{e:2.1} is chosen in
such a way that:
\begin{compactenum}
\item no column norm of a scaled $2\times 2$ real or complex matrix
  can overflow, and
\item no \emph{ordinary} singular value of a scaled matrix can
  overflow~\cite[Theorem~1]{Novakovic-20}, and
\item as many subnormal (components of the) matrix elements as
  possible are brought up to the normal range, for a faster and maybe
  more accurate computation.
\end{compactenum}
The following Example~\ref{x:2.1} illustrates several reasons for and
issues with the scaling.

\begin{example}\label{x:2.1}
  Assume $\widehat{J}^{[i]}=I_2$, and consider the following real
  $\widehat{G}^{[i]}$ for $1\le i\le 3$,
  \begin{displaymath}
    \widehat{G}^{[1]}=
    \begin{bmatrix}
      \Omega & \Omega/4\\
      \Omega/8 & \Omega/2
    \end{bmatrix},\quad
    \widehat{G}^{[2]}=
    \begin{bmatrix}
      \Omega/16 & \widehat{\omega}/4\\
      \widehat{\omega}/2 & \widehat{\omega}/8
    \end{bmatrix},\quad
    \widehat{G}^{[3]}=
    \begin{bmatrix}
      \omega & 0\\
      0 & \Omega
    \end{bmatrix},
  \end{displaymath}
  alongside their scaled versions $\widehat{G}_0^{[i]}=2^{s^{[i]}}\widehat{G}^{[i]}$,
  \begin{displaymath}
    \widehat{G}_0^{[1]}=
    \begin{bmatrix}
      \Omega/4 & \Omega/16\\
      \Omega/32 & \Omega/8
    \end{bmatrix},\quad
    \widehat{G}_0^{[2]}=
    \begin{bmatrix}
      \Omega/4 & \widehat{\omega}\\
      2\widehat{\omega} & \widehat{\omega}/2
    \end{bmatrix},\quad
    \widehat{G}_0^{[3]}=
    \begin{bmatrix}
      \omega/4=0 & 0\\
      0 & \Omega/4
    \end{bmatrix},
  \end{displaymath}
  with the scaling parameters, computed from~\eqref{e:2.1}, being
  $s^{[1]}=s^{[3]}=-2$ and $s^{[2]}=2$.

  The singular values of $\widehat{G}^{[1]}$, computed symbolically,
  are $\Omega(\sqrt{145}\pm 5)/16$, and obviously the larger one would
  overflow without the scaling.  In $\widehat{G}^{[2]}$ there are
  three subnormal values.  Two of them (the off-diagonal ones) get
  raised into the normal range, while the diagonal one stays
  subnormal, after the scaling.  Finally, $\widehat{G}^{[3]}$ is a
  pathological case of a non-singular matrix that is as
  ill-conditioned as possible, which becomes singular after the
  scaling, since $\omega/4=0$ in the rounding-to-nearest mode.
\end{example}
\subsubsection{Accurate floating-point computation of the polar form of a complex number}\label{sss:2.1.2}
Expressing $z$ as $|z|e^{\mathrm{i}\arg{z}}$ requires a reliable way
of computing $e^{\mathrm{i}\arg{z}}$, where
\begin{displaymath}
  e^{\mathrm{i}\arg{z}}=\cos(\arg{z})+\mathrm{i}\cdot\sin(\arg{z}).
\end{displaymath}
One such method, presented in~\cite[eq.~(1)]{Novakovic-20}, is
summarized as follows.  It is assumed that $z$ has already been scaled
as described in subsection~\ref{sss:2.1.1}, so $|z|$ cannot overflow.

Let $\mathtt{MIN}$ be a floating-point minimum function\footnote{For
example, the \textbf{minimumNumber} operation of the IEEE 754-2019
standard~\cite[section~9.6]{IEEE-754-2019}.}, with a property that if
its first argument is a $\mathtt{NaN}$ and the second is not, the
result is the second argument.  For the $\mathtt{IEEE\_MIN\_NUM}$
Fortran~2018~\cite{Fortran-18} intrinsic such a behavior is
guaranteed, but is also present with the $\mathtt{MIN}$ intrinsic of
the recent Intel Fortran compilers, and several others.  The same
property will be required for a maximum function $\mathtt{MAX}$ later
on.  Define
\begin{displaymath}
  \cos(\arg{z})=\mathop{\mathtt{MIN}}\left(\frac{|\RE{z}|}{|z|},1\right)\cdot\sign(\RE{z}),\quad
  \sin(\arg{z})=\frac{\IM{z}}{\max\{|z|,\omega\}}.
\end{displaymath}
When $|z|=0$, $\mathtt{MIN}$ ensures the correct value of
$\cos(\arg{z})$, because $\mathop{\mathtt{MIN}}(0/0,1)=1$, while in
that case taking the maximum in the denominator of $\sin(\arg{z})$
makes the whole quotient well defined, since $0/\omega=0$.  When
$|z|>0$, it holds $\max\{|z|,\omega\}=|z|$.
\subsubsection{Performing the rotation-like transformations in floating-point}\label{sss:2.1.3}
Some real-valued quantities throughout the paper are obtained by the
expressions of the form $a\cdot b+c$, and should be computed by a
single $\fma$, i.e., with a single rounding.  One such example is
$\cos\phi=1/\sqrt{\tan\phi\cdot\tan\phi+1}$ in stage~4 of
Algorithm~\ref{a:2.1}.

Regrettably, there is no widespread hardware support for the correctly
rounded reciprocal square root (i.e., $1/\sqrt{x}$) operation.
Therefore, the computation of a cosine involves at least three
roundings: from the fused multiply-add operation to obtain the
argument of the square root, from the square root itself, and from
taking the reciprocal value.  Instead of the cosines, the respective
secants can be computed with two roundings, without taking the
reciprocals.  Then, in all affected formulas the multiplications by
the cosines can be replaced by the respective divisions by the
secants.  Such an approach is slower than the usual one, but more
accurate in the worst case.

There is no standardized way to accurately compute $d=a\cdot b+c$ with
some or all values being complex.  However, the real fused
multiply-add operation can be employed as in the
CUDA~\cite[\texttt{cuComplex.h} header]{NVidia-19} implementation of
the complex arithmetic for an efficient, accurate, and reproducible
computation of $d$ as
\begin{displaymath}
  \begin{aligned}
    \RE(d)&=\fma(\RE(a),\RE(b),\fma(-\IM(a),\IM(b),\RE(c))),\\
    \IM(d)&=\fma(\RE(a),\IM(b),\fma(\hphantom{-}\IM(a),\RE(b),\IM(c))),
  \end{aligned}
\end{displaymath}
when $a$, $b$, and $c$ are complex.  Otherwise, when $a$ is real,
\begin{displaymath}
  \RE(d)=\fma(a,\RE(b),\RE(c)),\quad
  \IM(d)=\fma(a,\IM(b),\IM(c)),
\end{displaymath}
and when $a$ is non-zero and purely imaginary,
\begin{displaymath}
  \RE(d)=\fma(-\IM(a),\IM(b),\RE(c)),\quad
  \IM(d)=\fma(\IM(a),\RE(b),\IM(c)).
\end{displaymath}
Such operations can be implemented explicitly by the
$\mathtt{IEEE\_FMA}$ Fortran~2018~\cite{Fortran-18} intrinsic, or
implicitly, by relying on the compiler to emit the appropriate $\fma$
instructions.  By an abuse of notation, $\fma(a,b,c)$ in the following
stand for both the real-valued and the above complex-valued
operations, depending on the context.

With $a$ and $b$ complex, a multiplication $a\cdot b$ can be expressed
as $\fma(a,b,c)$ with $c=0$ and implemented as such, in a reproducible
way, but simplified by converting all real $\fma$ operations involving
a component of $c$ to real multiplications.

A plane rotation, if it is to be applied from the left, can be written
as (see, e.g.,~\cite{Drmac-97})
\begin{displaymath}
  \begin{bmatrix}
    \hphantom{\mp}\cos\phi & \pm\sin\phi\\
    \mp\sin\phi & \hphantom{\pm}\cos\phi
  \end{bmatrix}=
  \cos\phi\cdot
  \begin{bmatrix}
    1 & \pm\tan\phi\\
    \mp\tan\phi & 1    
  \end{bmatrix}=
  C\cdot T,
\end{displaymath}
and similarly, if a plane (i.e., trigonometric) rotation is to be
applied from the right,
\begin{displaymath}
  T\cdot C=
  \begin{bmatrix}
    1 & \pm\tan\phi\\
    \mp\tan\phi & 1    
  \end{bmatrix}
  \cdot\cos\phi=
  \begin{bmatrix}
    \hphantom{\mp}\cos\phi & \pm\sin\phi\\
    \mp\sin\phi & \hphantom{\pm}\cos\phi
  \end{bmatrix}.
\end{displaymath}
A multiplication by $T$ can be realized by a single $\fma$ per an
element of the result, while the subsequent scaling by $C$ can be
converted to divisions by the corresponding secants.  A similar
factorization holds for a hyperbolic rotation, e.g., from the right,
\begin{displaymath}
  T\cdot C=
  \begin{bmatrix}
    1 & \tanh\phi\\
    \tanh\phi & 1    
  \end{bmatrix}
  \cdot\cosh\phi=
  \begin{bmatrix}
    \cosh\phi & \sinh\phi\\
    \sinh\phi & \cosh\phi
  \end{bmatrix}.
\end{displaymath}
\subsection{The HSVD of $\widehat{T}$}\label{ss:2.2}
Given $\widehat{T}$ from Algorithm~\ref{a:2.1}, the HSVD of
$\widehat{T}$,
$\widetilde{U}^{\ast}\widehat{T}\widetilde{V}^{}=\widetilde{\Sigma}$,
now remains to be found.

\looseness=-1
First,
$\widetilde{U}^{\prime\ast}\widehat{T}\widetilde{V}'=\widetilde{\Sigma}'$
is computed, where (despite the notation) all matrices are real,
$\widetilde{\Sigma}'$ is diagonal with non-negative elements,
$\widetilde{U}^{\prime\ast}$ is orthogonal, and is sought in the form
of a plane rotation, while $\widetilde{V}'$ is
$\widehat{J}$-orthogonal, and is sought in the form of a plane
rotation when $\widehat{J}=I_2^{}$ or $\widehat{J}=-I_2^{}$, and in
the form of a hyperbolic rotation otherwise.

If $\widehat{J}\ne-I_2^{}$, $\widetilde{\Sigma}'$ is equal to the
matrix $\widetilde{\Sigma}$ of the hyperbolic singular values of
$\widehat{T}$.  Else,
$\widetilde{\Sigma}'=P_2^{}\widetilde{\Sigma}P_2^{\ast}$, i.e., the
diagonal elements of $\widetilde{\Sigma}'$ are in the order opposite
to the one prescribed by Definition~\ref{d:1.1}, and thus have to be
swapped to get
$\widetilde{\Sigma}=P_2^{\ast}\widetilde{\Sigma}'P_2^{}$.  In the
latter case, when $\widehat{J}=-I_2^{}$, let $\widetilde{U}^{\ast}$
and $\widetilde{V}$ be $P_2^{\ast}\widetilde{U}^{\prime\ast}$ and
$\widetilde{V}'P_2^{}$, respectively; else, let
$\widetilde{U}^{\ast}=\widetilde{U}^{\prime\ast}$ and
$\widetilde{V}=\widetilde{V}'$.  Note that $\widetilde{\Sigma}$ holds
the scaled hyperbolic singular values of $\widehat{G}$.

\looseness=-1
There are four non-disjoint cases for $\widehat{T}$ and $\widehat{J}$,
to be considered in the following order, stopping at, and proceeding
as in, the first case for which its condition is satisfied:
\begin{compactenum}
\item if $\widehat{T}$ is already diagonal then let
  $\widetilde{U}^{\prime\ast}=\widetilde{V}'=I_2^{}$ and
  $\widetilde{\Sigma}'=\widehat{T}$, else
\item if $\widehat{J}=I_2$ or $\widehat{J}=-I_2$, $\widehat{T}$ is
  upper triangular and proceed as in subsection~\ref{sss:2.2.1}, else
\item if $\widehat{T}$ is upper triangular, then proceed as in
  subsection~\ref{sss:2.2.2}, else
\item $\widehat{T}$ is lower triangular, and proceed as in
  subsection~\ref{sss:2.2.3}.
\end{compactenum}
The first case above corresponds to $\text{\sc d}=\top$, the second
one to $\text{\sc h}=\bot$, the third one to
$\text{\sc h}\wedge\neg\text{\sc c}=\top$, and the last one to
$\text{\sc h}\wedge\text{\sc c}=\top$, in terms of the states of
Algorithm~\ref{a:2.1}.

In fact, instead of $\widetilde{\Sigma}'$, the backscaled
$\widehat{\Sigma}'=2^{-s}\widetilde{\Sigma}'$ is directly computed for
stability, i.e., the hyperbolic singular values of $\widehat{G}$,
instead of $\widehat{T}$, are finally obtained by the backscaling
procedure described in a separate paragraph and Algorithm~\ref{a:2.2}
below.

\paragraph{Backscaling.}
A backscaling routine from Algorithm~\ref{a:2.2} is used in
Algorithms~\ref{a:2.3}, \ref{a:2.4}, and \ref{a:2.5}.
Algorithm~\ref{a:2.2} distributes the scale $2^{-s}$, with $s$
from~\eqref{e:2.1}, among one of the values to which it should be
applied and the given factor $f$, $1\le f\le\Omega/4$.  The values
$d_1^{}$ and $d_2^{}$ correspond to the diagonal elements of
$\widehat{T}$, in some order.  From~\eqref{e:2.8}, \eqref{e:2.14}, and
\eqref{e:2.20}, it follows that the scaled hyperbolic singular values
are of the form $d_1^{}f$ and $d_2^{}/f$, for a certain $f$.  The idea
behind Algorithm~\ref{a:2.2} is to upscale $f$ (since $s\ge -2$, it
cannot be by more than fourfold) or downscale it as much as possible,
while keeping it above the underflow threshold $\widehat{\omega}$, to
$f'$.  Any remaining backscaling factor is applied to $d_1^{}$ before
multiplying it by $f'$ to get $d_1'$, while $d_2^{}$ is divided by $f$
and the result is fully backscaled by $2^{-s}$ to obtain $d_2'$.
With such a distribution of $2^{-s}$ among $d_1^{}$ and $f$, neither
of them should lose accuracy in an intermediate computation leading to
$d_1'$ by being needlessly pushed down to the subnormal range, what
could otherwise happen by multiplying any of them by the full
backscaling factor.

\begin{algorithm}[hbtp]
  \SetKwFunction{Backscale}{backscale}
  \textbf{subroutine} \Backscale{$s$, $d_1^{}$, $d_2^{}$, $f$, $d_1'$, $d_2'$}\;
  \KwIn{$s$ from~\eqref{e:2.1}, $d_1\ge 0$, $d_2\ge 0$, $1\le f\le\Omega/4$}
  \KwOut{$d_1'\ge 0$, $d_2'\ge 0$}
  \KwData{$\mu=\mathop{\eta}(\widehat{\omega})$}
  $\Delta=\mathop{\eta}(f)-\mu$\tcp*[r]{exponents' distance between $f$ and the underflow threshold}
  \eIf(\tcp*[f]{partial backscaling of $f$}){$s>\Delta$}{$f'=2^{-\Delta}f$,\quad$\xi=s-\Delta$\;}(\tcp*[f]{full backscaling of $f$}){$f'=2^{-s}f$,\quad$\xi=0$\;}
  $d_1'=(2^{-\xi}d_1^{})f'$,\quad$d_2'=2^{-s}(d_2^{}/f)$\;
  \caption{A safe backscaling of the hyperbolic singular values.}
  \label{a:2.2}
\end{algorithm}

For example, let $f=1'$, the immediate floating-point successor of
unity, and $2^{-s}=\widehat{\omega}/2$.  Then, $2^{-s}f$ would cause
the least significant bit of $f$ to perish, with the result being
subnormal.  It might also happen that $2^{-s}d_1^{}$ becomes
subnormal.  Instead, with Algorithm~\ref{a:2.2}, $f'$ would be the
immediate successor of $\widehat{\omega}$, so no accuracy would be
lost and normality of the result would be preserved, and $\xi=1$,
i.e., $2^{-\xi}d_1^{}=d_1^{}/2$.

Let in the following $\alpha$ be
$\mathop{\mathtt{SQRT}}(\Omega)\approx 1.34078079299425956\cdot 10^{154}$
(when computing in \texttt{DOUBLE PRECISION}), corresponding to
$\sqrt{\mathtt{DBL\_MAX}}$ parameter from~\cite{Novakovic-20}.
\subsubsection{Trigonometric case with $\widehat{T}$ upper triangular}\label{sss:2.2.1}
This special case of the ordinary SVD and has been thoroughly covered
in~\cite{Novakovic-20} (with the trigonometric angles of the opposite
signs), but is summarized here for completeness.  Many techniques to
be introduced here are also used in the hyperbolic case.

\looseness=-1
The diagonalization requirement
$\widetilde{U}^{\prime\ast}\widehat{T}\widetilde{V}'=\widetilde{\Sigma}'$
can be expressed as
\begin{displaymath}
  \begin{bmatrix}
    \hphantom{-}\cos\varphi & \sin\varphi\\
    -\sin\varphi & \cos\varphi
  \end{bmatrix}
  \begin{bmatrix}
    \hat{t}_{11}^{} & \hat{t}_{12}^{}\\
    0 & \hat{t}_{22}^{}
  \end{bmatrix}
  \begin{bmatrix}
    \cos\psi & -\sin\psi\\
    \sin\psi & \hphantom{-}\cos\psi
  \end{bmatrix}=
  \begin{bmatrix}
    \tilde{\sigma}_{11}' & 0\\
    0 & \tilde{\sigma}_{22}'
  \end{bmatrix},
\end{displaymath}
where the order of the two matrix multiplications is arbitrary.  Let
such order be fixed to
$(\widetilde{U}^{\prime\ast}\widehat{T})\widetilde{V}'$.  By observing
that $\hat{t}_{11}^{}\ge\max\{\hat{t}_{12}^{},\hat{t}_{22}^{}\}\ge 0$
and $\hat{t}_{11}^{}>0$, and by restricting the ranges for $\varphi$
and $\psi$ such that $\cos\varphi>0$ and $\cos\psi>0$, it follows that
both sides can be scaled by $1/(\hat{t}_{11}^{}\cos\varphi\cos\psi)$
to obtain a system of equations in $\tan\varphi$ and $\tan\psi$ as
\begin{equation}
  \begin{bmatrix}
    1 & \tan\varphi\\
    -\tan\varphi & 1
  \end{bmatrix}
  \begin{bmatrix}
    1 & x\\
    0 & y
  \end{bmatrix}
  \begin{bmatrix}
    1 & -\tan\psi\\
    \tan\psi & 1
  \end{bmatrix}=
  \begin{bmatrix}
    \tilde{\sigma}_{11}'' & 0\\
    0 & \tilde{\sigma}_{22}''
  \end{bmatrix},
  \label{e:2.2}
\end{equation}
where $0\le\min\{x,y\}\le\max\{x,y\}\le 1$.  This \emph{crucial}
conversion of one arbitrary value to a constant
($\hat{t}_{11}^{}\to 1$), enabled by the special form of
$\widehat{T}$, is the key difference from the standard derivation of
the Kogbetliantz formulas used in, e.g., \texttt{xLASV2} LAPACK
routines, that makes the ones to be derived here simpler but still
highly accurate~\cite{Novakovic-20}.

Equating the off-diagonal elements of the left and the right hand side
of~\eqref{e:2.2} gives the annihilation conditions
\begin{equation}
  x+y\tan\varphi-\tan\psi=0=\tan\psi(y-x\tan\varphi)-\tan\varphi,
  \label{e:2.3}
\end{equation}
from which $\tan\psi$ can be expressed as
\begin{equation}
  x+y\tan\varphi=\tan\psi=\frac{\tan\varphi}{y-x\tan\varphi},
  \label{e:2.4}
\end{equation}
where the right hand side is not defined if and only if
$y-x\tan\varphi=0$, what would imply, from~\eqref{e:2.3}, that
$\tan\varphi=0$ and thus $y=0$, so $\tan\psi=x$ and the decomposition
of $\widehat{T}$ is complete.  In all other cases,~\eqref{e:2.4} leads
to a quadratic equation in $\tan\varphi$,
\begin{displaymath}
  xy(1-\tan^2\varphi)=(x^2-y^2+1)\tan\varphi,
\end{displaymath}
or, with the terms rearranged,
\begin{displaymath}
  0\le\frac{xy}{x^2-y^2+1}=\frac{\tan\varphi}{1-\tan^2\varphi}=\frac{1}{2}\tan(2\varphi),
\end{displaymath}
where $\tan^2\varphi=1$ would imply $x^2-y^2+1=0$, what cannot happen
since $0\le y\le 1$ and $x>0$ (due to the assumption that
$\widehat{T}$ is not diagonal).  Thus, $0\le\tan(2\varphi)<\infty$,
\begin{equation}
  \tan(2\varphi)=\frac{2xy}{(x-y)(x+y)+1},
  \label{e:2.5}
\end{equation}
and $0\le\tan\varphi<1$.  Note that $y=0$ implies $\tan(2\varphi)=0$,
so $\tan\varphi=0$ and no special handling, as above, of this case is
actually required.  Also, all squares of the elements of the scaled
$\widehat{T}$ have vanished from the denominator in~\eqref{e:2.5}.
Then,
\begin{equation}
  \tan\varphi=\frac{\tan(2\varphi)}{1+\sqrt{1+\tan^2(2\varphi)}},\quad
  \sec\varphi=\sqrt{1+\tan^2\varphi},\quad
  \cos\varphi=1/\sec\varphi,
  \label{e:2.6}
\end{equation}
while, from~\eqref{e:2.4},
\begin{equation}
  \tan\psi=x+y\tan\varphi,\quad
  \sec\psi=\sqrt{1+\tan^2\psi},\quad
  \cos\psi=1/\sec\psi.
  \label{e:2.7}
\end{equation}

The following result has been proven as a part of~\cite[Theorem~1]{Novakovic-20}.
\begin{theorem}\label{t:2.1}
  It holds $\sqrt{2}\ge\tan\psi\ge\tan\varphi\ge 0$ and
  $\tilde{\sigma}_{11}'\ge\tilde{\sigma}_{22}'\ge 0$, where
  \begin{equation}
    \tilde{\sigma}_{11}'=\frac{\sec\psi}{\sec\varphi}\hat{t}_{11}^{}=\frac{\cos\varphi}{\cos\psi}\hat{t}_{11}^{},\quad
    \tilde{\sigma}_{22}'=\frac{\sec\varphi}{\sec\psi}\hat{t}_{22}^{}=\frac{\cos\psi}{\cos\varphi}\hat{t}_{22}^{}.
    \label{e:2.8}
  \end{equation}
\end{theorem}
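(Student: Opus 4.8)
The plan is to work directly from the explicit formulas \eqref{e:2.5}--\eqref{e:2.8}, treating $x$ and $y$ with $0\le x,y\le 1$ (and, for the non-trivial part, $x>0$) as the only free parameters, and to establish the four chained inequalities $\sqrt 2\ge\tan\psi\ge\tan\varphi\ge 0$ together with $\tilde\sigma_{11}'\ge\tilde\sigma_{22}'\ge 0$. First I would dispose of the degenerate branch: if $y=0$ then \eqref{e:2.5} gives $\tan(2\varphi)=0$, hence $\tan\varphi=0$ from \eqref{e:2.6}, and $\tan\psi=x\in[0,1]$ from \eqref{e:2.7}, so all claimed bounds hold trivially and $\tilde\sigma_{11}'=\hat t_{11}\ge\hat t_{22}=\tilde\sigma_{22}'$ from \eqref{e:2.8} since $\sec\psi\ge\sec\varphi=1$ and, by the form of $\widehat T$ coming out of Algorithm~\ref{a:2.1}, $\hat t_{11}\ge\hat t_{22}$. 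So assume henceforth $x>0$ and $y>0$.

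Next I would prove $\tan\varphi\ge 0$ and $\tan\varphi<1$: nonnegativity is immediate since $\tan(2\varphi)=2xy/((x-y)(x+y)+1)\ge 0$ (the denominator is $x^2-y^2+1\ge 1-y^2\ge 0$, in fact $>0$ because $y\le 1$ with equality forcing $x^2>0$), and then \eqref{e:2.6} gives $0\le\tan\varphi<1$ because $t/(1+\sqrt{1+t^2})<1$ for every finite $t\ge 0$. For the middle inequality $\tan\psi\ge\tan\varphi$, I would use the representation $\tan\psi=x+y\tan\varphi$ from \eqref{e:2.7}: subtracting, $\tan\psi-\tan\varphi=x-(1-y)\tan\varphi$, and since $0\le 1-y\le 1$ and $0\le\tan\varphi<1$, the subtracted term is at most $\tan\varphi<1$; a cleaner route is to also invoke the second expression $\tan\psi=\tan\varphi/(y-x\tan\varphi)$ from \eqref{e:2.4}, which shows $y-x\tan\varphi>0$, i.e. $\tan\varphi<y/x$, and then one of the two forms of $\tan\psi$ will yield $\tan\psi\ge\tan\varphi$ after an elementary manipulation; I expect a short case split on whether $x\le y$ or $x>y$ makes this painless. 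The upper bound $\tan\psi\le\sqrt 2$ I would get from $\tan\psi=x+y\tan\varphi\le 1+\tan\varphi$ and a sharper estimate of $\tan\varphi$: since $\tan(2\varphi)=2xy/(x^2-y^2+1)\le 2xy/(x^2+ (1-y^2))$ and AM--GM-type bounds give $\tan(2\varphi)\le$ some explicit constant when $x,y\le 1$, one pushes $\tan\varphi$ below $\sqrt2-1$; this is the one spot where I expect to actually have to optimize a two-variable expression over the unit square.

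Finally, for the singular-value ordering: $\tilde\sigma_{11}',\tilde\sigma_{22}'\ge 0$ is clear from \eqref{e:2.8} since all of $\sec\psi,\sec\varphi,\hat t_{11},\hat t_{22}$ are $\ge 0$. For $\tilde\sigma_{11}'\ge\tilde\sigma_{22}'$, I would multiply the two formulas in \eqref{e:2.8} to get $\tilde\sigma_{11}'\tilde\sigma_{22}'=\hat t_{11}\hat t_{22}$ (the secants cancel), and separately note $\tilde\sigma_{11}'/\tilde\sigma_{22}'=(\sec^2\psi/\sec^2\varphi)(\hat t_{11}/\hat t_{22})=\bigl((1+\tan^2\psi)/(1+\tan^2\varphi)\bigr)(\hat t_{11}/\hat t_{22})$, which is $\ge 1$ because $\tan\psi\ge\tan\varphi$ (just proved) and $\hat t_{11}\ge\hat t_{22}$ (the defining property of $\widehat T$: in the upper-triangular trigonometric case $\hat t_{11}^2\ge\hat t_{12}^2+\hat t_{22}^2\ge\hat t_{22}^2$). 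I would also have to argue $\tilde\sigma_{22}'\ne 0$ is not needed — nonnegativity suffices — but if $\hat t_{22}=0$ the ratio argument is vacuous and $\tilde\sigma_{22}'=0$ directly. The main obstacle is the $\sqrt2$ bound on $\tan\psi$: it is the only inequality that genuinely requires extremizing over $(x,y)\in[0,1]^2$ rather than following from monotonicity, and I would attack it by substituting $\tan\psi=x+y\tan\varphi$ with $\tan\varphi$ eliminated via \eqref{e:2.5}--\eqref{e:2.6}, reducing to showing a single rational-plus-radical inequality in $x,y$ on the square, then checking it on the boundary $\{x=1\}$ and at interior critical points.
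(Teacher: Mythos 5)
The decisive step of your argument---the bound $\tan\psi\le\sqrt{2}$---is set up on the wrong domain, and on that domain the bound is simply false. You propose to extremize $\tan\psi=x+y\tan\varphi$ over the unit square $[0,1]^2$, but at $x=y=1$ equations \eqref{e:2.5}--\eqref{e:2.7} give $\tan(2\varphi)=2$, $\tan\varphi=2/(1+\sqrt{5})=(\sqrt{5}-1)/2$, and hence $\tan\psi=(1+\sqrt{5})/2\approx 1.618>\sqrt{2}$. The inequality holds only because Algorithm~\ref{a:2.1} guarantees $\hat{t}_{11}^2\ge\hat{t}_{12}^2+\hat{t}_{22}^2$, i.e., $x^2+y^2\le 1$---a constraint you quote later (for the ordering of the singular values) but never feed into the optimization. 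On that quarter disk the extremization is easy: $\partial\tan(2\varphi)/\partial x\propto y(1-x^2-y^2)\ge 0$ and $\partial\tan(2\varphi)/\partial y\propto x(1+x^2+y^2)\ge 0$, so $\tan\psi$ is nondecreasing in both variables and attains its maximum on the arc $x^2+y^2=1$, where $\tan(2\varphi)=y/x$ yields $\tan\varphi=y/(1+x)$ and $\tan\psi=x+(1-x^2)/(1+x)=1\le\sqrt{2}$. Your auxiliary claim that $\tan\varphi$ can be pushed below $\sqrt{2}-1$ is also false even on the disk ($\tan\varphi\to 1$ as $(x,y)\to(0,1)$ along the arc), so the route via $\tan\psi\le 1+\tan\varphi$ cannot be repaired.

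Two smaller points. First, your initial justification of $\tan\psi\ge\tan\varphi$ (``the subtracted term $(1-y)\tan\varphi$ is at most $\tan\varphi<1$'') does not imply $x-(1-y)\tan\varphi\ge 0$; a working fix along the lines you sketch is the elementary bound $\tan\varphi\le\tfrac12\tan(2\varphi)=xy/(x^2-y^2+1)$, which gives $(1-y)\tan\varphi\le x(y-y^2)/(x^2-y^2+1)\le x$ because $y\le 1+x^2$. Second, the theorem also asserts the formulas \eqref{e:2.8} themselves, which you treat as given; as in the paper's proofs of the analogous Theorems~\ref{t:2.2} and~\ref{t:2.3}, they should be derived by evaluating the diagonal of \eqref{e:2.2} with the help of \eqref{e:2.3}--\eqref{e:2.4} (the $(1,1)$ entry equals $1+\tan^2\psi=\sec^2\psi$ and the $(2,2)$ entry equals $y\sec^2\varphi$) and multiplying back by $\hat{t}_{11}\cos\varphi\cos\psi$. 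For context, the paper does not prove Theorem~\ref{t:2.1} itself but cites \cite[Theorem~1]{Novakovic-20}; your handling of the remaining inequalities---nonnegativity of $\tan\varphi$ and the ordering $\tilde{\sigma}_{11}'\ge\tilde{\sigma}_{22}'\ge 0$ via $\sec\psi\ge\sec\varphi$ and $\hat{t}_{11}\ge\hat{t}_{22}$---is sound.
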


From Theorem~\ref{t:2.1} it follows that the computed singular values
of $\widehat{T}$ are ordered descendingly.  When $\widehat{J}=-I_2$,
they have to be swapped, as described in subsection~\ref{ss:2.2}.

Algorithm~\ref{a:2.3} stably computes the derived quantities in
floating-point and works also for a diagonal $\widehat{T}$.  Bounding
$\tan(2\varphi)$ from above by $\alpha$ is necessary to avoid a
possible overflow of the argument of the square root in computing
$\tan\varphi$, without resorting to $\mathtt{HYPOT}$ or losing
accuracy of the result (see~\cite[subsection~2.3]{Novakovic-20}).  The
expression $2xy$ is computed as $(2a)b$, where $a=\min\{x,y\}$ and
$b=\max\{x,y\}$, to prevent the avoidable underflows of $xy$ (e.g.,
when $x$ is just above the underflow threshold and $y\approx 1/2$).
No division $a/b$ should be replaced by $a(1/b)$, nor square root
inaccurately approximated.  Algorithm~\ref{a:2.3} is reproducible if
a method of computing $1/\sqrt{x}$ is fixed.

\begin{algorithm}[hbtp]
  $x=\mathop{\mathtt{MAX}}(\hat{t}_{12}/\hat{t}_{11},0)$,\quad$y=\mathop{\mathtt{MAX}}(\hat{t}_{22}/\hat{t}_{11},0)$\tcp*[r]{\eqref{e:2.2} ($\mathtt{MAX}$ handles $\widehat{T}=\mathbf{0}$ case)}
  $\displaystyle\tan(2\varphi)=\min\left\{\mathop{\mathtt{MAX}}\left(\frac{(2\min\{x,y\})\max\{x,y\}}{\fma(x-y,x+y,1)},0\right),\alpha\right\}$\tcp*[r]{\eqref{e:2.5}}
  $\displaystyle\tan\varphi=\frac{\tan(2\varphi)}{1+\sqrt{\fma(\tan(2\varphi),\tan(2\varphi),1)}}$,\quad$\tan\psi=\fma(y,\tan\varphi,x)$\tcp*[r]{{\rm\eqref{e:2.6}--\eqref{e:2.7}}}
  \eIf{{\rm an accurate enough (e.g., correctly rounded) $\mathop{\mathtt{rsqrt}}(x)=1/\sqrt{x}$ function is available}}{%
    $\cos\varphi=\mathop{\mathtt{rsqrt}}(\fma(\tan\varphi,\tan\varphi,1))$\tcp*[r]{\eqref{e:2.6}}
    $\cos\psi=\mathop{\mathtt{rsqrt}}(\fma(\tan\psi,\tan\psi,1))$\tcp*[r]{\eqref{e:2.7}}
    \Backscale{$s$, $\hat{t}_{11}^{}$, $\hat{t}_{22}^{}$, $\cos\varphi/\cos\psi$, $\hat{\sigma}_{11}'$, $\hat{\sigma}_{22}'$}\tcp*[r]{\eqref{e:2.8} \&\ Algorithm~\ref{a:2.2}}
  }(\tcp*[f]{usually, $\mathop{\mathtt{rsqrt}}(x)$ is not available}){%
    $\sec\varphi=\sqrt{\fma(\tan\varphi,\tan\varphi,1)}$,\quad$\cos\varphi=1/\sec\varphi$\tcp*[r]{\eqref{e:2.6}}
    $\sec\psi=\sqrt{\fma(\tan\psi,\tan\psi,1)}$,\quad$\cos\psi=1/\sec\psi$\tcp*[r]{\eqref{e:2.7}}
    \Backscale{$s$, $\hat{t}_{11}^{}$, $\hat{t}_{22}^{}$, $\sec\psi/\sec\varphi$, $\hat{\sigma}_{11}'$, $\hat{\sigma}_{22}'$}\tcp*[r]{\eqref{e:2.8} \&\ Algorithm~\ref{a:2.2}}
  }
  \caption{The SVD of an upper triangular $\widehat{T}$ in \texttt{DOUBLE PRECISION}.}
  \label{a:2.3}
\end{algorithm}
\subsubsection{Hyperbolic case with $\widehat{T}$ upper triangular}\label{sss:2.2.2}
The diagonalization requirement
$\widetilde{U}^{\prime\ast}\widehat{T}\widetilde{V}'=\widetilde{\Sigma}'$,
scaled by $1/(\hat{t}_{11}\cos\varphi\cosh\psi)>0$ in the hyperbolic
case for $\widehat{T}$ upper triangular, is
\begin{equation}
  \begin{bmatrix}
    1 & \tan\varphi\\
    -\tan\varphi & 1
  \end{bmatrix}
  \begin{bmatrix}
    1 & x\\
    0 & y
  \end{bmatrix}
  \begin{bmatrix}
    1 & \tanh\psi\\
    \tanh\psi & 1
  \end{bmatrix}=
  \begin{bmatrix}
    \tilde{\sigma}_{11}'' & 0\\
    0 & \tilde{\sigma}_{22}''
  \end{bmatrix},
  \label{e:2.9}
\end{equation}
from which, similarly to the trigonometric case, the annihilation
conditions follow as
\begin{equation}
  x+y\tan\varphi+\tanh\psi=0=\tanh\psi(y-x\tan\varphi)-\tan\varphi.
  \label{e:2.10}
\end{equation}
If $y=0$ then $\tan\varphi=0$ and $\tanh\psi=-x$
satisfy~\eqref{e:2.10}, so the decomposition of $\widehat{T}$ is
complete, unless $x=1$ and therefore $\tanh\psi=-1$, what is
impossible.  Thus, the HSVD is \emph{not defined} when the columns of
$\widehat{T}$ (equivalently, of $\widehat{G}$) are identical.

In the remaining cases, with $y>0$, $\tanh\psi$ can be
expressed as
\begin{equation}
  x+y\tan\varphi=-\tanh\psi=\frac{\tan\varphi}{x\tan\varphi-y},
  \label{e:2.11}
\end{equation}
what gives, in a manner and with caveats similar to the trigonometric
case,
\begin{equation}
  -\infty<\tan(2\varphi)=\frac{-2xy}{(y-x)(y+x)+1}\le 0,
  \label{e:2.12}
\end{equation}
and the remaining functions of $\varphi$ are computed as
in~\eqref{e:2.6}.  From~\eqref{e:2.11} it follows
\begin{equation}
  \tanh\psi=-(x+y\tan\varphi),\quad
  \sech\psi=\sqrt{1-\tanh^2\psi},\quad
  \cosh\psi=1/\sech\psi.
  \label{e:2.13}
\end{equation}

Theorem~\ref{t:2.2} is an analogon of Theorem~\ref{t:2.1} in this
hyperbolic case.

\begin{theorem}\label{t:2.2}
  If, in~\eqref{e:2.9}, $x\ne 1$, then $|\tanh\psi|<1$ and
  \begin{equation}
    \tilde{\sigma}_{11}'=\frac{\sech\psi}{\sec\varphi}\hat{t}_{11}^{}=\frac{\cos\varphi}{\cosh\psi}\hat{t}_{11}^{},\quad
    \tilde{\sigma}_{22}'=\frac{\sec\varphi}{\sech\psi}\hat{t}_{22}^{}=\frac{\cosh\psi}{\cos\varphi}\hat{t}_{22}^{}.
    \label{e:2.14}
  \end{equation}
\end{theorem}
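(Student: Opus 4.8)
The plan is to mirror the proof of Theorem~\ref{t:2.1}, with the symmetric hyperbolic rotation playing the role of $\widetilde{V}'$. Write $x=\hat{t}_{12}^{}/\hat{t}_{11}^{}$ and $y=\hat{t}_{22}^{}/\hat{t}_{11}^{}$, as in~\eqref{e:2.9}. The $\widehat{J}$-UTV factorization (Algorithm~\ref{a:2.1}) produces, in this branch, an upper triangular, non-diagonal $\widehat{T}$ with real non-negative entries satisfying $\hat{t}_{11}^{}>0$ and $\hat{t}_{11}^2\ge\hat{t}_{12}^2+\hat{t}_{22}^2$; hence $0\le x,y\le 1$, $x^2+y^2\le 1$, and the hypothesis $x\ne 1$ forces $x<1$. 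First I would pin down the range of $\varphi$: in~\eqref{e:2.12} the denominator $(y-x)(y+x)+1=1+y^2-x^2$ is strictly positive because $x<1$, so $-\infty<\tan(2\varphi)\le 0$ is well defined; the half-angle formula~\eqref{e:2.6} then yields $\tan\varphi\le 0$ and, since $1+\sqrt{1+t^2}>|t|$ for every real $t$, also $|\tan\varphi|=|\tan(2\varphi)|/\bigl(1+\sqrt{1+\tan^2(2\varphi)}\bigr)<1$. Thus $-1<\tan\varphi\le 0$ (trivially so when $y=0$, where $\tan\varphi=0$).

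Next I would establish $|\tanh\psi|<1$. By~\eqref{e:2.11} (equivalently, by the first identity in~\eqref{e:2.10}, also used in~\eqref{e:2.13}) one has $\tanh\psi=-(x+y\tan\varphi)$. For the upper bound, $\tan\varphi\le 0$ and $y\ge 0$ give $x+y\tan\varphi\le x<1$. For the lower bound, $\tan\varphi>-1$ gives $y\tan\varphi\ge -y$, and together with $x\ge 0$ and $y\le 1$ this yields $x+y\tan\varphi\ge x-y\ge -1$; moreover the inequality is strict, since $x+y\tan\varphi=-1$ would force $x=0$, $y=1$, and $\tan\varphi=-1$, contradicting $|\tan\varphi|<1$. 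Hence $-1<x+y\tan\varphi<1$, i.e.\ $|\tanh\psi|<1$, so $\sech\psi=\sqrt{1-\tanh^2\psi}>0$ and $\cosh\psi=1/\sech\psi$ are well defined, exactly as in~\eqref{e:2.13}.

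Finally, for the singular-value formulas~\eqref{e:2.14} I would expand the triple matrix product on the left of~\eqref{e:2.9}. Its $(1,2)$ and $(2,1)$ entries are $\tanh\psi+x+y\tan\varphi$ and $\tanh\psi(y-x\tan\varphi)-\tan\varphi$, which vanish precisely by the two annihilation conditions in~\eqref{e:2.10}. Substituting $\tanh\psi=-(x+y\tan\varphi)$ into the diagonal entries gives $\tilde{\sigma}_{11}''=1-(x+y\tan\varphi)^2=1-\tanh^2\psi=\sech^2\psi$ and, after the term $x\tan\varphi$ cancels, $\tilde{\sigma}_{22}''=y(1+\tan^2\varphi)=y\sec^2\varphi$. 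Since~\eqref{e:2.9} was obtained from $\widetilde{U}^{\prime\ast}\widehat{T}\widetilde{V}'=\widetilde{\Sigma}'$ by scaling with the positive factor $1/(\hat{t}_{11}^{}\cos\varphi\cosh\psi)$---equivalently, by writing $\widetilde{U}^{\prime\ast}=\cos\varphi\left[\begin{smallmatrix}1&\tan\varphi\\-\tan\varphi&1\end{smallmatrix}\right]$ and $\widetilde{V}'=\cosh\psi\left[\begin{smallmatrix}1&\tanh\psi\\\tanh\psi&1\end{smallmatrix}\right]$---undoing the scaling gives $\tilde{\sigma}_{11}'=\hat{t}_{11}^{}\cos\varphi\cosh\psi\cdot\sech^2\psi=\hat{t}_{11}^{}\cos\varphi/\cosh\psi$ and $\tilde{\sigma}_{22}'=\hat{t}_{11}^{}\cos\varphi\cosh\psi\cdot y\sec^2\varphi=\hat{t}_{22}^{}\cosh\psi/\cos\varphi$, which are the right-hand forms of~\eqref{e:2.14}; the left-hand forms follow from $\cos\varphi=1/\sec\varphi$ and $\cosh\psi=1/\sech\psi$.

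The \emph{delicate} step is the strict inequality $|\tanh\psi|<1$: the boundary values $\tanh\psi=\pm 1$ must be excluded exactly under the hypothesis $x\ne 1$, and this rests on $|\tan\varphi|<1$ being strict (hence on $1+\sqrt{1+t^2}>|t|$) together with $0\le x<1$ and $0\le y\le 1$ supplied by the $\widehat{J}$-UTV factorization. Everything else is routine algebra once~\eqref{e:2.12} and the range $-1<\tan\varphi\le 0$ are in hand, and closely parallels the proof of Theorem~\ref{t:2.1}.
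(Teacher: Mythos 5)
Your proof is correct and follows essentially the same route as the paper's: establish $-1<\tan\varphi\le 0$ from~\eqref{e:2.12}, bound $|\tanh\psi|=|x+y\tan\varphi|$ using $0\le x<1$ and $0\le y\le 1$ to get strict inequality, and then obtain~\eqref{e:2.14} by substituting the annihilation conditions into the diagonal entries of~\eqref{e:2.9} and undoing the scaling by $\hat{t}_{11}^{}\cos\varphi\cosh\psi$. Your version is merely more explicit about where the bounds on $x$, $y$, and $\tan\varphi$ come from (the $\widehat{J}$-UTV structure and the half-angle formula), which the paper leaves implicit.
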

\begin{proof}
  From~\eqref{e:2.12}, $-1<\tan\varphi\le 0$, and from~\eqref{e:2.13},
  $|\tanh\psi|=|x+y\tan\varphi|$.  With $x$ fixed, $|\tanh\psi|$
  attains the maximum value for $y\tan\varphi$ being either the
  smallest, i.e., when approaching $-1$ from the right, or the largest
  possible, i.e., zero.  In the first case, $|\tanh\psi|<|x-1|\le 1$.
  In the second, $|\tanh\psi|=|x|=x$, so if $x\ne 1$ then
  $|\tanh\psi|<1$.

  Now~\eqref{e:2.14} is proven.  From~\eqref{e:2.9} and~\eqref{e:2.13}
  it follows
  \begin{align*}
    \tilde{\sigma}_{11}''&=1+(x+y\tan\varphi)\tanh\psi=1-\tanh^2\psi={\sech}^2\psi,\\
    \tilde{\sigma}_{22}''&=y-x\tan\varphi-\tan\varphi\tanh\psi=y-\tan\varphi(x+\tanh\psi)=y+y\tan^2\varphi=y\sec^2\varphi,
  \end{align*}
  what, after multiplying both equations by
  $\hat{t}_{11}^{}\cos\varphi\cosh\psi$, gives~\eqref{e:2.14}.
  \qed%
\end{proof}

Algorithm~\ref{a:2.4}, similarly to Algorithm~\ref{a:2.3}, computes
reproducibly and as stably as practicable the HSVD of an upper
triangular (including diagonal) $\widehat{T}$ when
$\widehat{J}\ne\pm I_2$.

\begin{algorithm}[hbtp]
  $x=\mathop{\mathtt{MAX}}(\hat{t}_{12}/\hat{t}_{11},0)$,\quad$y=\mathop{\mathtt{MAX}}(\hat{t}_{22}/\hat{t}_{11},0)$\tcp*[r]{\eqref{e:2.9} ($\mathtt{MAX}$ handles $\widehat{T}=\mathbf{0}$ case)}
  $\displaystyle\tan(2\varphi)=-\min\left\{\mathop{\mathtt{MAX}}\left(\frac{(2\min\{x,y\})\max\{x,y\}}{\fma(y-x,y+x,1)},0\right),\alpha\right\}$\tcp*[r]{\eqref{e:2.12}}
  $\displaystyle\tan\varphi=\frac{\tan(2\varphi)}{1+\sqrt{\fma(\tan(2\varphi),\tan(2\varphi),1)}}$,\quad$\tanh\psi=-\fma(y,\tan\varphi,x)$\tcp*[r]{\eqref{e:2.6}~\&~\eqref{e:2.13}}
  \lIf(\tcp*[f]{$|\tanh\psi|$ is unsafe if it is too close to unity}){$|\tanh\psi|\ge\upsilon$}{\KwRet{$\bot$}}
  \eIf{{\rm an accurate enough (e.g., correctly rounded) $\mathop{\mathtt{rsqrt}}(x)=1/\sqrt{x}$ function is available}}{%
    $\cos\varphi=\mathop{\mathtt{rsqrt}}(\fma(\tan\varphi,\tan\varphi,1))$\tcp*[r]{\eqref{e:2.6}}
    $\cosh\psi=\mathop{\mathtt{rsqrt}}(\fma(-\tanh\psi,\tanh\psi,1))$\tcp*[r]{\eqref{e:2.13}}
    \Backscale{$s$, $\hat{t}_{22}^{}$, $\hat{t}_{11}^{}$, $\cosh\psi/\cos\varphi$, $\hat{\sigma}_{22}'$, $\hat{\sigma}_{11}'$}\tcp*[r]{\eqref{e:2.14} \&\ Algorithm~\ref{a:2.2}}
  }(\tcp*[f]{usually, $\mathop{\mathtt{rsqrt}}(x)$ is not available}){%
    $\sec\varphi=\sqrt{\fma(\tan\varphi,\tan\varphi,1)}$,\quad$\cos\varphi=1/\sec\varphi$\tcp*[r]{\eqref{e:2.6}}
    $\sech\psi=\sqrt{\fma(-\tanh\psi,\tanh\psi,1)}$,\quad$\cosh\psi=1/\sech\psi$\tcp*[r]{\eqref{e:2.13}}
    \Backscale{$s$, $\hat{t}_{22}^{}$, $\hat{t}_{11}^{}$, $\sec\varphi/\sech\psi$, $\hat{\sigma}_{22}'$, $\hat{\sigma}_{11}'$}\tcp*[r]{\eqref{e:2.14} \&\ Algorithm~\ref{a:2.2}}
  }
  \KwRet{$\top$}\tcp*[r]{the hyperbolic transformation is defined and safe}
  \caption{The HSVD of an upper triangular $\widehat{T}$ in \texttt{DOUBLE PRECISION}.}
  \label{a:2.4}
\end{algorithm}

Here and in Algorithm~\ref{a:2.5} from subsection~\ref{sss:2.2.3} a
safety parameter $\upsilon\lesssim 1$ is introduced that will be fully
explained in subsection~\ref{ss:2.3}.  For now, assume $\upsilon=1$.
\subsubsection{Hyperbolic case with $\widehat{T}$ lower triangular}\label{sss:2.2.3}
The diagonalization requirement
$\widetilde{U}^{\prime\ast}\widehat{T}\widetilde{V}'=\widetilde{\Sigma}'$,
scaled by $1/(\hat{t}_{22}\cos\varphi\cosh\psi)>0$ in the hyperbolic
case for $\widehat{T}$ lower triangular, is
\begin{equation}
  \begin{bmatrix}
    1 & \tan\varphi\\
    -\tan\varphi & 1
  \end{bmatrix}
  \begin{bmatrix}
    y & 0\\
    x & 1
  \end{bmatrix}
  \begin{bmatrix}
    1 & \tanh\psi\\
    \tanh\psi & 1
  \end{bmatrix}=
  \begin{bmatrix}
    \tilde{\sigma}_{11}'' & 0\\
    0 & \tilde{\sigma}_{22}''
  \end{bmatrix},
  \label{e:2.15}
\end{equation}
from which the annihilation conditions follow as
\begin{equation}
  \tanh\psi(y+x\tan\varphi)+\tan\varphi=0=x-y\tan\varphi+\tanh\psi.
  \label{e:2.16}
\end{equation}
If $y=0$ then $\tan\varphi=0$ and $\tanh\psi=-x$
satisfy~\eqref{e:2.16}, so the decomposition of $\widehat{T}$ is
complete, unless $x=1$ and thus $\tanh\psi=-1$, what is impossible.
The HSVD of $\widehat{T}$ is \emph{not defined}, same as in
subsection~\ref{sss:2.2.2}, when the columns of $\widehat{T}$ (i.e.,
$\widehat{G}$) are identical.

In the remaining cases, with $y>0$, $\tanh\psi$ can be
expressed as
\begin{equation}
  x-y\tan\varphi=-\tanh\psi=\frac{\tan\varphi}{y+x\tan\varphi},
  \label{e:2.17}
\end{equation}
what gives, similarly to the other hyperbolic case,
\begin{equation}
  0\le\tan(2\varphi)=\frac{2xy}{(y-x)(y+x)+1}<\infty,
  \label{e:2.18}
\end{equation}
and the remaining functions of $\varphi$ are computed as
in~\eqref{e:2.6}.  From~\eqref{e:2.17} it follows
\begin{equation}
  \tanh\psi=y\tan\varphi-x,\quad
  \sech\psi=\sqrt{1-\tanh^2\psi},\quad
  \cosh\psi=1/\sech\psi.
  \label{e:2.19}
\end{equation}

Theorem~\ref{t:2.3} is an analogon of Theorem~\ref{t:2.2} in this
hyperbolic case.

\begin{theorem}\label{t:2.3}
  If, in~\eqref{e:2.15}, $x\ne 1$, then $|\tanh\psi|<1$ and
  \begin{equation}
    \tilde{\sigma}_{11}'=\frac{\sec\varphi}{\sech\psi}\hat{t}_{11}^{}=\frac{\cosh\psi}{\cos\varphi}\hat{t}_{11}^{},\quad
    \tilde{\sigma}_{22}'=\frac{\sech\psi}{\sec\varphi}\hat{t}_{22}^{}=\frac{\cos\varphi}{\cosh\psi}\hat{t}_{22}^{}.
    \label{e:2.20}
  \end{equation}
\end{theorem}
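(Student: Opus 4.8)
The plan is to mirror the proof of Theorem~\ref{t:2.2}, transferred to the lower-triangular normalization~\eqref{e:2.15} and the relations~\eqref{e:2.17}--\eqref{e:2.19} derived just above. First I would fix the ranges of the data: since $\widehat{T}$ is lower triangular with real non-negative entries satisfying $\hat{t}_{22}^2\ge\hat{t}_{21}^2+\hat{t}_{11}^2$, and is not diagonal (that case being handled by the first case of subsection~\ref{ss:2.2}), we have $\hat{t}_{22}>0$, so $x=\hat{t}_{21}/\hat{t}_{22}$ and $y=\hat{t}_{11}/\hat{t}_{22}$ satisfy $0\le x\le 1$, $0\le y\le 1$, and $x^2+y^2\le 1$. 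From~\eqref{e:2.18} (and trivially in the branch $y=0$, where $\tan\varphi=0$) we get $\tan(2\varphi)\ge 0$, so~\eqref{e:2.6} yields $0\le\tan\varphi<1$, because $\sqrt{1+t^2}>t$ for $t\ge 0$ forces $t/(1+\sqrt{1+t^2})<1$.

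Next, for $|\tanh\psi|<1$ I would use $\tanh\psi=y\tan\varphi-x$ from~\eqref{e:2.19}: here $y\tan\varphi\in[0,\tan\varphi]\subseteq[0,1)$ since $0\le y\le 1$ and $\tan\varphi\ge 0$, while $x\ne 1$ combined with $x\le 1$ gives $-x\in(-1,0]$, and adding these bounds yields $-1<\tanh\psi<1$. This is the step that carries the theorem's hypothesis, and it is the only genuine obstacle: it is precisely $x\ne 1$ that keeps the lower end of the interval strictly above $-1$, matching the observation below~\eqref{e:2.16} that the HSVD ceases to be defined exactly when the columns of $\widehat{T}$ coincide ($x=1$, $y=0$). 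I would also note that the estimate is uniform across the $y=0$ and $y>0$ cases.

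For the formulas~\eqref{e:2.20} I would multiply out the left-hand side of~\eqref{e:2.15}: its off-diagonal entries vanish by the annihilation conditions~\eqref{e:2.16}; the $(2,2)$ entry equals $1+(x-y\tan\varphi)\tanh\psi=1-\tanh^2\psi=\sech^2\psi$ by~\eqref{e:2.17}; and the $(1,1)$ entry equals $y+\tan\varphi(x+\tanh\psi)=y(1+\tan^2\varphi)=y\sec^2\varphi$ after substituting $x+\tanh\psi=y\tan\varphi$ from~\eqref{e:2.19}. Hence $\tilde{\sigma}_{11}''=y\sec^2\varphi$ and $\tilde{\sigma}_{22}''=\sech^2\psi$; undoing the scale $1/(\hat{t}_{22}\cos\varphi\cosh\psi)$ and using $y\hat{t}_{22}=\hat{t}_{11}$ together with $\cos\varphi\sec^2\varphi=\sec\varphi$ and $\cosh\psi\sech^2\psi=\sech\psi$ gives $\tilde{\sigma}_{11}'=\hat{t}_{11}\sec\varphi\cosh\psi$ and $\tilde{\sigma}_{22}'=\hat{t}_{22}\cos\varphi\sech\psi$, which are~\eqref{e:2.20} once rewritten via $\cos\varphi=1/\sec\varphi$ and $\cosh\psi=1/\sech\psi$; both are non-negative since $\sech\psi>0$ by the bound just established. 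This last part is routine bookkeeping, so the write-up would spend most of its care on the sharpness of the $|\tanh\psi|<1$ estimate.
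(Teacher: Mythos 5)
Your proposal is correct and follows essentially the same route as the paper's proof: the bound $|\tanh\psi|<1$ comes from $\tanh\psi=y\tan\varphi-x$ with $y\tan\varphi\in[0,1)$ and $x\le 1$, $x\ne 1$ (the paper phrases this as evaluating $|y\tan\varphi-x|$ at the endpoints of the interval, which is the same elementary estimate), and the identities $\tilde{\sigma}_{11}''=y\sec^2\varphi$ and $\tilde{\sigma}_{22}''=\sech^2\psi$ followed by undoing the scale $1/(\hat{t}_{22}\cos\varphi\cosh\psi)$ are exactly the paper's computation. No gaps.
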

\begin{proof}
  From~\eqref{e:2.18}--\eqref{e:2.19}, $0\le\tan\varphi<1$ and
  $|\tanh\psi|=|y\tan\varphi-x|$.  With $x$ fixed, $|\tanh\psi|$
  attains the maximum value for $y\tan\varphi$ being either the
  largest, i.e., when approaching unity from the left, or the smallest
  possible, i.e., zero.  In the first case, $|\tanh\psi|<|1-x|\le 1$.
  In the second, $|\tanh\psi|=|-x|=x$, so $x\ne 1\implies|\tanh\psi|<1$.

  Now~\eqref{e:2.20} is proven.  From~\eqref{e:2.15} and~\eqref{e:2.19}
  it follows
  \begin{align*}
    \tilde{\sigma}_{11}''&=y+x\tan\varphi+\tan\varphi\tanh\psi=y+\tan\varphi(x+\tanh\psi)=y+y\tan^2\varphi=y\sec^2\varphi,\\
    \tilde{\sigma}_{22}''&=1+(x-y\tan\varphi)\tanh\psi=1-\tanh^2\psi={\sech}^2\psi,
  \end{align*}
  what, after multiplying both equations by
  $\hat{t}_{22}^{}\cos\varphi\cosh\psi$, gives~\eqref{e:2.20}.
  \qed%
\end{proof}

Algorithm~\ref{a:2.5}, similarly to Algorithm~\ref{a:2.4}, computes
reproducibly and as stably as practicable the HSVD of a lower
triangular (including diagonal) $\widehat{T}$ when
$\widehat{J}\ne\pm I_2$.

\begin{algorithm}[hbtp]
  $x=\mathop{\mathtt{MAX}}(\hat{t}_{21}/\hat{t}_{22},0)$,\quad$y=\mathop{\mathtt{MAX}}(\hat{t}_{11}/\hat{t}_{22},0)$\tcp*[r]{\eqref{e:2.15} ($\mathtt{MAX}$ handles $\widehat{T}=\mathbf{0}$ case)}
  $\displaystyle\tan(2\varphi)=\min\left\{\mathop{\mathtt{MAX}}\left(\frac{(2\min\{x,y\})\max\{x,y\}}{\fma(y-x,y+x,1)},0\right),\alpha\right\}$\tcp*[r]{\eqref{e:2.18}}
  $\displaystyle\tan\varphi=\frac{\tan(2\varphi)}{1+\sqrt{\fma(\tan(2\varphi),\tan(2\varphi),1)}}$,\quad$\tanh\psi=\fma(y,\tan\varphi,-x)$\tcp*[r]{\eqref{e:2.6}~\&~\eqref{e:2.19}}
  \lIf(\tcp*[f]{$|\tanh\psi|$ is unsafe if it is too close to unity}){$|\tanh\psi|\ge\upsilon$}{\KwRet{$\bot$}}
  \eIf{{\rm an accurate enough (e.g., correctly rounded) $\mathop{\mathtt{rsqrt}}(x)=1/\sqrt{x}$ function is available}}{%
    $\cos\varphi=\mathop{\mathtt{rsqrt}}(\fma(\tan\varphi,\tan\varphi,1))$\tcp*[r]{\eqref{e:2.6}}
    $\cosh\psi=\mathop{\mathtt{rsqrt}}(\fma(-\tanh\psi,\tanh\psi,1))$\tcp*[r]{\eqref{e:2.19}}
    \Backscale{$s$, $\hat{t}_{11}^{}$, $\hat{t}_{22}^{}$, $\cosh\psi/\cos\varphi$, $\hat{\sigma}_{11}'$, $\hat{\sigma}_{22}'$}\tcp*[r]{\eqref{e:2.20} \&\ Algorithm~\ref{a:2.2}}
  }(\tcp*[f]{usually, $\mathop{\mathtt{rsqrt}}(x)$ is not available}){%
    $\sec\varphi=\sqrt{\fma(\tan\varphi,\tan\varphi,1)}$,\quad$\cos\varphi=1/\sec\varphi$\tcp*[r]{\eqref{e:2.6}}
    $\sech\psi=\sqrt{\fma(-\tanh\psi,\tanh\psi,1)}$,\quad$\cosh\psi=1/\sech\psi$\tcp*[r]{\eqref{e:2.19}}
    \Backscale{$s$, $\hat{t}_{11}^{}$, $\hat{t}_{22}^{}$, $\sec\varphi/\sech\psi$, $\hat{\sigma}_{11}'$, $\hat{\sigma}_{22}'$}\tcp*[r]{\eqref{e:2.20} \&\ Algorithm~\ref{a:2.2}}
  }
  \KwRet{$\top$}\tcp*[r]{the hyperbolic transformation is defined and safe}
  \caption{The HSVD of a lower triangular $\widehat{T}$ in \texttt{DOUBLE PRECISION}.}
  \label{a:2.5}
\end{algorithm}

Let $\varepsilon$, unless noted otherwise, be half the unit in the
last place (ulp) of $1$, or equivalently, $\varepsilon=1-{'1}$, where
$'1$ is the immediate floating-point predecessor of $1$ in the chosen
datatype; e.g., $\varepsilon=2^{-24}$ and $\varepsilon=2^{-53}$ in
single and double precision, respectively.  In Fortran, the latter is
equal to the result of
$\mathop{\mathtt{EPSILON}}(\mathtt{0D0})/\mathtt{2}$.

From~\eqref{e:2.13} and~\eqref{e:2.19} it follows
$\sqrt{\varepsilon}\le\sech\psi\le 1$, since the argument of the
secant's defining square root cannot be less than $\varepsilon$
whenever $|\tanh\psi|<1$ (what is implied by
$\tanh^2\psi\le|\tanh\psi|\le{'1}$).  Also, from~\eqref{e:2.5},
\eqref{e:2.12}, and~\eqref{e:2.18} it can be concluded that
$|\tan\varphi|<1$, so $1\le\sec\varphi\le\sqrt{2}$ and
$1\le\sec\varphi/\sech\psi\le\sqrt{2/\varepsilon}$.  In the
trigonometric case, $1\le\sec\psi\le\sqrt{3}$ and
$\sec\varphi\le\sec\psi$, due to~\eqref{e:2.7} and
Theorem~\ref{t:2.1}, so
$1\le\sec\psi/\sec\varphi\le\sqrt{3}<\sqrt{2/\varepsilon}$.  Hence
$f\le\sqrt{2/\varepsilon}$ in the uses of Algorithm~\ref{a:2.2}.
\subsection{The HSVD of $\widehat{G}$}\label{ss:2.3}
The HSVD of $\widehat{G}$ as
$\widehat{U}^{\ast}\widehat{G}\widehat{V}=\widehat{\Sigma}$ is
obtained in four phases:
\begin{compactenum}
\item computing the scaling parameter $s$ and
  $\widehat{G}_0=2^s\widehat{G}$ as in subsection~\ref{sss:2.1.1},
\item factoring $\widehat{G}_0^{}$ as
  $\check{U}^{\ast}\widehat{G}_0^{}\check{V}=\widehat{T}$ by
  Algorithm~\ref{a:2.1},
\item computing the HSVD of $\widehat{T}$,
  $\widetilde{U}^{\ast}\widehat{T}\widetilde{V}=\widetilde{\Sigma}$
  and $\widehat{\Sigma}=2^{-s}\widetilde{\Sigma}$, as in
  subsection~\ref{ss:2.2}, and
\item assembling
  $\widehat{U}^{\ast}=\widetilde{U}^{\ast}\check{U}^{\ast}$ and
  $\widehat{V}=\check{V}\widetilde{V}$ (and, if required,
  $\widehat{V}^{-1}=\widehat{J}\widehat{V}^{\ast}\widehat{J}$).
\end{compactenum}

The third phase can fail if the HSVD is not defined, or is unsafe, as
indicated by the return values of Algorithm~\ref{a:2.4} and
Algorithm~\ref{a:2.5}.  Safety (or a lack thereof), parametrized by
$\upsilon$ in those algorithms, is a user's notion of how close
$|\tanh\psi|$ can get to unity from below to still define a hyperbolic
rotation that is well-enough conditioned to be applied to the pivot
columns of the iteration matrix.  For the $2\times 2$ HSVD in
isolation, $\upsilon=1$, but for the $n_0\times n_0$ HSVD it might
sometimes be necessary to set it a bit lower if the HSVD
process otherwise fails to converge, e.g., to $\upsilon=0.8$, as
in~\cite{Veselic-93}, or even lower.  There is no prescription for
choosing an adequate $\upsilon$ in advance; if an execution takes far
more (multi-)steps than expected (see section~\ref{s:6} for some
estimates in terms of cycles), it should be aborted and restarted with
a lower $\upsilon$.

If $\upsilon<1$, $|\tanh\psi|<1$, and the return value from either
algorithm is $\bot$, there are two possibilities.  First, $\tanh\psi$
can be set to $\sign(\tanh\psi)\upsilon$, and the hyperbolic
transformation can be computed accordingly~\cite{Slapnicar-92}.
However, the off-diagonal elements of the transformed pivot matrix
cannot be considered zeros anymore, and they have to be formed in the
iteration matrix and included in the weight computations (see
subsection~\ref{ss:3.2}).  An inferior but simpler solution declares
that the $2\times 2$ HSVD is not defined, as if $|\tanh\psi|\ge 1$,
and the pivot pair in question will no longer be a transformation
candidate in the current multi-step, in the hope that it will again
become one, with a better conditioned hyperbolic rotation, after its
pivot row(s) and column(s) have been sufficiently transformed.  In the
implementation the latter option has been chosen, but the former might
lead (not tested) to a faster convergence when it has to be employed.
\section{Row and column transformations}\label{s:3}
If the HSVD of $\widehat{G}_{k-1}$ is not defined, the algorithm
stops.  Else, having computed $\widehat{U}_k^{\ast}$,
$\widehat{\Sigma}_k$, and $\widehat{V}_k$ for a transformation
candidate with the pivot indices $p_k$ and $q_k$, the $p_k$th and the
$q_k$th row of $G_{k-1}$ are transformed by multiplying them from the
left by $\widehat{U}_k^{\ast}$,
\begin{displaymath}
  \begin{bmatrix}
    G_{k-1}'(p_k^{},:)\\
    G_{k-1}'(q_k^{},:)
  \end{bmatrix}=
  \widehat{U}_k^{\ast}
  \begin{bmatrix}
    G_{k-1}^{}(p_k^{},:)\\
    G_{k-1}^{}(q_k^{},:)
  \end{bmatrix}.
\end{displaymath}
Then, $G_k^{}$ is obtained from $G_{k-1}'$ after transforming the
$p_k^{}$th and the $q_k^{}$th column of $G_{k-1}'$ by multiplying them
from the right by $V_k^{}$,
\begin{displaymath}
  \begin{bmatrix}
    G_k^{}(:,p_k^{}) & G_k^{}(;,q_k^{})
  \end{bmatrix}=
  \begin{bmatrix}
    G_{k-1}'(:,p_k^{}) & G_{k-1}'(:,q_k^{})
  \end{bmatrix}
  V_k,
\end{displaymath}
and setting $G_k(p_k,p_k)$ to the first diagonal element of
$\widehat{\Sigma}_k$, $G_k(q_k,q_k)$ to the second one (in both cases
reusing the possibly more accurate hyperbolic singular values from the
HSVD of $\widehat{G}_{k-1}$ then those computed by the row and the
column transformations of $G_k$), while explicitly zeroing out
$G_k(p_k,q_k)$ and $G_k(q_k,p_k)$.

If the left and the right (hyperbolic) singular vectors are desired,
in a similar way as above the current approximations of $U^{\ast}$ and
$V$ are updated by $\widehat{U}_k^{\ast}$ and $\widehat{V}_k^{}$,
respectively.
\subsection{Effects of a hyperbolic transformation}\label{ss:3.1}
If $\widehat{V}_k$ is unitary, the square of the off-diagonal
Frobenius norm of the transformed $G_k$ is reduced by
$|G_{k-1}(q_k,p_k)|^2+|G_{k-1}(p_k,q_k)|^2\ge 0$.  Else, if
$\widehat{V}_k$ is $\widehat{J}_k$-unitary, the following
Lemma~\ref{l:3.1} sets the bounds to the relative change of the square
of the Frobenius norm of the $p_k$th and the $q_k$th columns
multiplied from the right by a hyperbolic rotation.  Note that a
complex $\widehat{V}_k$ is of the form
\begin{displaymath}
  \widehat{V}_k=
  \begin{bmatrix}
    1 & 0\\
    0 & e^{-\mathrm{i}\beta}
  \end{bmatrix}
  \begin{bmatrix}
    \cosh\psi & \sinh\psi\\
    \sinh\psi & \cosh\psi
  \end{bmatrix}=
  \begin{bmatrix}
    \hphantom{e^{-\mathrm{i}\beta}}\cosh\psi & \hphantom{e^{-\mathrm{i}\beta}}\sinh\psi\\
    e^{-\mathrm{i}\beta}\sinh\psi & e^{-\mathrm{i}\beta}\cosh\psi
  \end{bmatrix},
\end{displaymath}
which can alternatively be expressed as
\begin{displaymath}
  \begin{bmatrix}
    \hphantom{e^{-\mathrm{i}\beta}}\cosh\psi & \hphantom{e^{-\mathrm{i}\beta}}\sinh\psi\\
    e^{-\mathrm{i}\beta}\sinh\psi & e^{-\mathrm{i}\beta}\cosh\psi
  \end{bmatrix}=
  \begin{bmatrix}
    \hphantom{e^{-\mathrm{i}\beta}}\cosh\psi & e^{\mathrm{i}\beta}\sinh\psi\\
    e^{-\mathrm{i}\beta}\sinh\psi & \hphantom{e^{\mathrm{i}\beta}}\cosh\psi
  \end{bmatrix}
  \begin{bmatrix}
    1 & 0\\
    0 & e^{-\mathrm{i}\beta}
  \end{bmatrix},
\end{displaymath}
where the rightmost matrix (call it $B$) is unitary and does not
change the Frobenius norm of any matrix that multiplies it from the
left.  Lemma~\ref{l:3.1} is thus stated in full generality, for
$\widehat{V}_k B^{\ast}$, since
$\|X\widehat{V}_k B^{\ast}\|_F=\|X\widehat{V}_k B^{\ast}B\|_F=\|X\widehat{V}_k\|_F$
for all conformant $X$.

\begin{lemma}\label{l:3.1}
  If $\mathbf{x}$ and $\mathbf{y}$ are complex vectors of length $n$,
  such that
  $\left\|\begin{bmatrix}\mathbf{x}&\mathbf{y}\end{bmatrix}\right\|_F>0$,
  and
  \begin{displaymath}
    \begin{bmatrix}
      \mathbf{x}' & \mathbf{y}'
    \end{bmatrix}=
    \begin{bmatrix}
      \mathbf{x} & \mathbf{y}
    \end{bmatrix}
    \begin{bmatrix}
      \hphantom{e^{-\mathrm{i}\beta}}\cosh\psi & e^{\mathrm{i}\beta}\sinh\psi\\
      e^{-\mathrm{i}\beta}\sinh\psi & \hphantom{e^{\mathrm{i}\beta}}\cosh\psi
    \end{bmatrix},
  \end{displaymath}
  then
  \begin{displaymath}
    \cosh(2\psi)-|\sinh(2\psi)|\le
    \frac{\left\|\begin{bmatrix}\mathbf{x}'&\mathbf{y}'\end{bmatrix}\right\|_F^2}{\left\|\begin{bmatrix}\mathbf{x}&\mathbf{y}\end{bmatrix}\right\|_F^2}\le
    \cosh(2\psi)+|\sinh(2\psi)|.
  \end{displaymath}  
\end{lemma}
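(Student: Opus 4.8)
The plan is to evaluate the numerator $\left\|\begin{bmatrix}\mathbf{x}'&\mathbf{y}'\end{bmatrix}\right\|_F^2$ in closed form, as a linear combination of $\left\|\begin{bmatrix}\mathbf{x}&\mathbf{y}\end{bmatrix}\right\|_F^2$ and a single real cross term, and then to bound that cross term by two elementary inequalities. First I would write $\mathbf{x}'=\mathbf{x}\cosh\psi+\mathbf{y}\,e^{-\mathrm{i}\beta}\sinh\psi$ and $\mathbf{y}'=\mathbf{x}\,e^{\mathrm{i}\beta}\sinh\psi+\mathbf{y}\cosh\psi$, and expand the two squared Euclidean norms using $\|\mathbf{a}+\mathbf{b}\|_2^2=\|\mathbf{a}\|_2^2+\|\mathbf{b}\|_2^2+2\RE(\mathbf{a}^{\ast}\mathbf{b})$. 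Since $\mathbf{y}^{\ast}\mathbf{x}=\overline{\mathbf{x}^{\ast}\mathbf{y}}$, the factors $e^{\pm\mathrm{i}\beta}$ combine with their conjugates, and the cross term in each of $\|\mathbf{x}'\|_2^2$ and $\|\mathbf{y}'\|_2^2$ collapses to the same real quantity $2\sinh\psi\cosh\psi\,\RE(e^{-\mathrm{i}\beta}\mathbf{x}^{\ast}\mathbf{y})$.

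Adding the two expansions and using $\cosh^2\psi+\sinh^2\psi=\cosh(2\psi)$, $2\sinh\psi\cosh\psi=\sinh(2\psi)$, and $\left\|\begin{bmatrix}\mathbf{x}&\mathbf{y}\end{bmatrix}\right\|_F^2=\|\mathbf{x}\|_2^2+\|\mathbf{y}\|_2^2$, I would arrive at the identity
\begin{displaymath}
  \left\|\begin{bmatrix}\mathbf{x}'&\mathbf{y}'\end{bmatrix}\right\|_F^2
  =\cosh(2\psi)\left\|\begin{bmatrix}\mathbf{x}&\mathbf{y}\end{bmatrix}\right\|_F^2
  +2\sinh(2\psi)\,\RE(e^{-\mathrm{i}\beta}\mathbf{x}^{\ast}\mathbf{y}).
\end{displaymath}
It then remains only to control the last summand. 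By the Cauchy--Schwarz inequality, $\bigl|\RE(e^{-\mathrm{i}\beta}\mathbf{x}^{\ast}\mathbf{y})\bigr|\le|\mathbf{x}^{\ast}\mathbf{y}|\le\|\mathbf{x}\|_2\|\mathbf{y}\|_2$, and by the inequality of arithmetic and geometric means, $2\|\mathbf{x}\|_2\|\mathbf{y}\|_2\le\|\mathbf{x}\|_2^2+\|\mathbf{y}\|_2^2=\left\|\begin{bmatrix}\mathbf{x}&\mathbf{y}\end{bmatrix}\right\|_F^2$, whence $\bigl|2\RE(e^{-\mathrm{i}\beta}\mathbf{x}^{\ast}\mathbf{y})\bigr|\le\left\|\begin{bmatrix}\mathbf{x}&\mathbf{y}\end{bmatrix}\right\|_F^2$. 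Dividing the identity by $\left\|\begin{bmatrix}\mathbf{x}&\mathbf{y}\end{bmatrix}\right\|_F^2>0$ then gives
\begin{displaymath}
  \left|\frac{\left\|\begin{bmatrix}\mathbf{x}'&\mathbf{y}'\end{bmatrix}\right\|_F^2}{\left\|\begin{bmatrix}\mathbf{x}&\mathbf{y}\end{bmatrix}\right\|_F^2}-\cosh(2\psi)\right|\le|\sinh(2\psi)|,
\end{displaymath}
which is exactly the claimed two-sided bound (the extremes $\cosh(2\psi)\mp|\sinh(2\psi)|$ being $e^{-2|\psi|}$ and $e^{2|\psi|}$).

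I do not expect a genuine obstacle here: the argument is a short, direct computation. The only care required is in the bookkeeping --- verifying that the conjugate pairing really makes the two cross terms identical and real, so that nothing imaginary survives, and tracking the factors of two when passing from $\sinh\psi\cosh\psi$ and $\cosh^2\psi+\sinh^2\psi$ to $\sinh(2\psi)$ and $\cosh(2\psi)$. It is worth noting that both inequalities invoked are sharp, attained when $\mathbf{x}$ and $\mathbf{y}$ are collinear with $e^{-\mathrm{i}\beta}\mathbf{x}^{\ast}\mathbf{y}$ real of a fixed sign, so the bounds of Lemma~\ref{l:3.1} are the best possible. As an alternative route, the two-sided estimate also follows from the general fact that right multiplication by a matrix $M$ scales the squared Frobenius norm by a factor between $\sigma_{\min}(M)^2$ and $\sigma_{\max}(M)^2$, once one observes that the $2\times 2$ hyperbolic rotation (pre- or post-multiplied by a diagonal phase, which is unitary and irrelevant for the norm) is Hermitian with eigenvalues $e^{|\psi|}$ and $e^{-|\psi|}$; the direct computation is preferable because it additionally yields the exact value of the norm ratio.
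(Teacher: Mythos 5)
Your proof is correct and follows essentially the same route as the paper's: both expand the transformed squared Frobenius norm, isolate the single real cross term proportional to $\sinh(2\psi)$, and bound it by the arithmetic--geometric mean inequality; the only difference is that you organize the computation vectorially via $\RE(e^{-\mathrm{i}\beta}\mathbf{x}^{\ast}\mathbf{y})$ and Cauchy--Schwarz, whereas the paper works componentwise with polar forms and sums over $\ell$ at the end. Your closed-form identity for the norm ratio (and the remark on sharpness, which matches Lemma~\ref{l:3.2}) is a slightly cleaner packaging of the same argument, so no further comparison is needed.
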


The following Lemma~\ref{l:3.2} refines the bounds stated in
Lemma~\ref{l:3.1}.  Both Lemmas, proved in Appendix~\ref{s:A}, are
used in the proof of Theorem~\ref{t:3.1} in subsection~\ref{ss:3.2}.

\begin{lemma}\label{l:3.2}
  In the lower or the upper bound established in Lemma~\ref{l:3.1} the
  equality is attainable if and only if
  $\mathbf{y}=\pm e^{\mathrm{i}\beta}\mathbf{x}$ or $\psi=0$.  The
  lower bound is always positive but at most unity, and the upper
  bound is at least unity.
\end{lemma}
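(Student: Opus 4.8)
My plan is to start from the explicit expression for the ratio $\rho := \|[\mathbf{x}'\ \mathbf{y}']\|_F^2 / \|[\mathbf{x}\ \mathbf{y}]\|_F^2$ that must already be the centerpiece of the proof of Lemma~\ref{l:3.1}. Writing $a = \|\mathbf{x}\|_F^2$, $b = \|\mathbf{y}\|_F^2$, and $c = \mathbf{y}^{\ast}\mathbf{x}$ (or its real part after absorbing the phase $e^{\mathrm{i}\beta}$), a direct computation gives $\|[\mathbf{x}'\ \mathbf{y}']\|_F^2 = (a+b)\cosh(2\psi) + 2\,\RE(e^{\mathrm{i}\beta}c)\sinh(2\psi)$, so that $\rho = \cosh(2\psi) + \sinh(2\psi)\cdot\frac{2\RE(e^{\mathrm{i}\beta}c)}{a+b}$. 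The Cauchy--Schwarz inequality $|c| \le \sqrt{ab} \le (a+b)/2$ then shows $\bigl|\frac{2\RE(e^{\mathrm{i}\beta}c)}{a+b}\bigr| \le 1$, which is exactly what pins $\rho$ between $\cosh(2\psi)\mp|\sinh(2\psi)|$ and recovers Lemma~\ref{l:3.1}. The equality analysis for Lemma~\ref{l:3.2} is then a matter of tracking when each of these two inequalities is tight.

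First I would handle the trivial direction: if $\psi = 0$ then $\sinh(2\psi) = 0$, so $\rho = 1$ and both bounds collapse to $1$ and are attained; and if $\mathbf{y} = \pm e^{\mathrm{i}\beta}\mathbf{x}$ then $b = a$, $\RE(e^{\mathrm{i}\beta}c) = \pm a = \pm(a+b)/2$, so the factor $\frac{2\RE(e^{\mathrm{i}\beta}c)}{a+b}$ equals $\pm 1$ and $\rho$ hits $\cosh(2\psi)\pm|\sinh(2\psi)|$ exactly. For the converse, assume $\psi \ne 0$ and that equality holds in (say) the upper bound; then $\frac{2\RE(e^{\mathrm{i}\beta}c)}{a+b} = \sign(\sinh(2\psi))$, forcing simultaneously $\RE(e^{\mathrm{i}\beta}c) = \pm|c|$ (the phase of $c$ is locked), $|c| = \sqrt{ab}$ (Cauchy--Schwarz equality, so $\mathbf{x}$ and $\mathbf{y}$ are parallel), and $\sqrt{ab} = (a+b)/2$ (AM--GM equality, so $a = b$). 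Parallel plus equal norms plus the correct sign of the inner product gives $\mathbf{y} = \pm e^{\mathrm{i}\beta}\mathbf{x}$; the lower-bound case is identical with the sign flipped.

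Finally I would dispatch the last sentence. Since $\cosh(2\psi) \ge 1$ always, the upper bound $\cosh(2\psi) + |\sinh(2\psi)| \ge 1$ is immediate, with equality iff $\psi = 0$. For the lower bound, write $\ell := \cosh(2\psi) - |\sinh(2\psi)| = e^{-2|\psi|} > 0$, which is manifestly positive and at most $1$, with $\ell = 1$ iff $\psi = 0$. I expect the only mildly delicate point to be bookkeeping the phase $e^{\mathrm{i}\beta}$ so that the three equality conditions (phase alignment of $c$, Cauchy--Schwarz, AM--GM) are assembled into the single clean statement $\mathbf{y} = \pm e^{\mathrm{i}\beta}\mathbf{x}$ rather than something weaker; everything else is routine. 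Since the paper defers both lemmas to Appendix~\ref{s:A}, I would present this argument there, reusing the computation of $\rho$ already set up for Lemma~\ref{l:3.1}.
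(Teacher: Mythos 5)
Your proof is correct, but it takes a genuinely different route from the paper's. The paper proves Lemma~\ref{l:3.1} component-wise: each pair $(x_{\ell},y_{\ell})$ is put in polar form, the cross term carries a factor $\zeta_{\ell}=\cos(\delta_{\ell}-\gamma_{\ell}-\beta)$, and the bounds come from $|\zeta_{\ell}|\le 1$ plus the per-component AM--GM inequality $2|x_{\ell}||y_{\ell}|\le|x_{\ell}|^2+|y_{\ell}|^2$; the proof of Lemma~\ref{l:3.2} then characterizes equality by demanding $|x_{\ell}|=|y_{\ell}|$ for every $\ell$ and $\zeta_{\ell}$ equal to the constant $\pm\sign(\sinh\psi)$, which it assembles into $\mathbf{y}=\pm e^{\mathrm{i}\beta}\mathbf{x}$. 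You instead work globally with $a=\|\mathbf{x}\|_F^2$, $b=\|\mathbf{y}\|_F^2$, $c=\mathbf{y}^{\ast}\mathbf{x}$, obtain the exact identity $\rho=\cosh(2\psi)+\sinh(2\psi)\cdot 2\RE(e^{\mathrm{i}\beta}c)/(a+b)$ (which checks out against the transformation as stated), and read the equality conditions off the chain $|\RE(e^{\mathrm{i}\beta}c)|\le|c|\le\sqrt{ab}\le(a+b)/2$: phase alignment, Cauchy--Schwarz, and a single AM--GM on the two norms. The two sets of conditions are equivalent, and your closed form $\cosh(2\psi)-|\sinh(2\psi)|=e^{-2|\psi|}\in(0,1]$ replaces the paper's factorization $1=(\cosh(2\psi)-|\sinh(2\psi)|)(\cosh(2\psi)+|\sinh(2\psi)|)$. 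Your route buys an exact expression for the ratio rather than only two bounds, and an equality analysis that leans on the standard Cauchy--Schwarz equality case; the paper's route stays entirely at the level of magnitudes and phase differences of individual components and never needs the inner product. One point to make explicit when writing yours up: equality with $\psi\ne 0$ forces $\sqrt{ab}=(a+b)/2>0$, so both vectors are nonzero and the Cauchy--Schwarz equality case legitimately yields proportionality before you conclude $\mathbf{y}=\pm e^{\mathrm{i}\beta}\mathbf{x}$.
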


Another observation is that the norm of the transformed columns
depends both on the norm of the original columns, as well as on the
hyperbolic transformation applied.  Therefore, $\psi$ of a relatively
large magnitude does not by itself pose a problem if the original
columns have a modest norm.  And contrary, even $\psi$ of a relatively
small magnitude can---and in practice, will---cause the columns'
elements of a huge magnitude, should such exist, to overflow in the
finite machine arithmetic.
\subsection{Weight of a transformation candidate}\label{ss:3.2}
Let $\off(A)$ be the square of the off-diagonal Frobenius norm of
$A\in\mathbb{F}^{n\times n}$, i.e.,
\begin{displaymath}
  \off(A)=\sum_{j=1}^n\sum_{\substack{i=1\\i\ne j}}^n|a_{ij}^{}|^2=\|A-\diag(a_{11}^{},\ldots,a_{nn}^{})\|_F^2.
\end{displaymath}
The following Theorem~\ref{t:3.1} deals with the amount of change
$\off(G_{k-1}^{})-\off(G_k^{})$.

\begin{theorem}\label{t:3.1}
  For $k$ such that $1\le k\le N$ and
  $w_k^{}=\off(G_{k-1}^{})-\off(G_k^{})$ it holds
  \begin{equation}
    w_k^{}=|G_{k-1}^{}(q_k^{},p_k^{})|^2+|G_{k-1}^{}(p_k^{},q_k^{})|^2+h_k^{},
    \label{e:3.2}
  \end{equation}
  where $h_k=0$ and $w_k$ is non-negative if $V_k$ is unitary.
  Otherwise, for $h_k$ holds
  \begin{equation}
    h_k^{}=\smashoperator{\sum_{\substack{i=1\\i\notin\{p_k^{},q_k^{}\}}}^n}
    \left((|G_{k-1}^{}(i,p_k^{})|^2-|G_k^{}(i,p_k^{})|^2)+(|G_{k-1}^{}(i,q_k^{})|^2-|G_k^{}(i,q_k^{})|^2)\right),
    \label{e:3.3}
  \end{equation}
  and $w_k$ can be negative, positive, or zero.
\end{theorem}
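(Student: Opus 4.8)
The plan is to exploit the locality of the two transformations: $\widehat{U}_k^{\ast}$ alters only rows $p_k^{}$ and $q_k^{}$, $\widehat{V}_k^{}$ alters only columns $p_k^{}$ and $q_k^{}$, and they are applied in that order (rows first, then columns). First I would partition the set of off-diagonal positions $\{(i,j):1\le i,j\le n,\ i\ne j\}$ into four disjoint blocks: (a) the two pivot positions $(p_k^{},q_k^{})$ and $(q_k^{},p_k^{})$; (b) the ``row'' positions $(p_k^{},j)$ and $(q_k^{},j)$ with $j\notin\{p_k^{},q_k^{}\}$; (c) the ``column'' positions $(i,p_k^{})$ and $(i,q_k^{})$ with $i\notin\{p_k^{},q_k^{}\}$; and (d) all positions with both indices outside $\{p_k^{},q_k^{}\}$. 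Since $\off(\cdot)$ is the sum of the squared moduli over these positions, both $\off(G_{k-1}^{})$ and $\off(G_k^{})$ split accordingly, so $w_k^{}$ is the sum of the four block-wise differences $\off$-old minus $\off$-new.

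Next I would evaluate the four differences. Block (d) is untouched by either transformation, hence contributes $0$. In block (a), the corresponding part of $G_k^{}$ is $\widehat{\Sigma}_k^{}$, a diagonal matrix (and the entries $G_k^{}(p_k^{},q_k^{})$, $G_k^{}(q_k^{},p_k^{})$ are in any case explicitly zeroed), so block (a) contributes exactly $|G_{k-1}^{}(p_k^{},q_k^{})|^2+|G_{k-1}^{}(q_k^{},p_k^{})|^2$ to $w_k^{}$. In block (b), the $2\times(n-2)$ submatrix formed by rows $p_k^{},q_k^{}$ and the columns outside $\{p_k^{},q_k^{}\}$ is left-multiplied by the unitary $\widehat{U}_k^{\ast}$ and then not touched by $\widehat{V}_k^{}$; unitary invariance of the Frobenius norm makes this contribution $0$. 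In block (c), the $(n-2)\times 2$ submatrix formed by the rows outside $\{p_k^{},q_k^{}\}$ and columns $p_k^{},q_k^{}$ is \emph{not} reached by $\widehat{U}_k^{\ast}$ (those rows lie outside $\{p_k^{},q_k^{}\}$), so it still equals the corresponding part of $G_{k-1}^{}$ when $\widehat{V}_k^{}$ is applied from the right; writing $\mathbf{x}$ and $\mathbf{y}$ for its two columns, its contribution to $w_k^{}$ is precisely $\|[\mathbf{x}\ \mathbf{y}]\|_F^2-\|[\mathbf{x}'\ \mathbf{y}']\|_F^2$, which upon expanding the Frobenius norms entrywise is the quantity $h_k^{}$ of \eqref{e:3.3}. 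Adding the four contributions gives \eqref{e:3.2}. If $\widehat{V}_k^{}$ is unitary it preserves $\|[\mathbf{x}\ \mathbf{y}]\|_F^{}$, so $h_k^{}=0$ and $w_k^{}=|G_{k-1}^{}(p_k^{},q_k^{})|^2+|G_{k-1}^{}(q_k^{},p_k^{})|^2\ge 0$; note that \eqref{e:3.3} is in fact the universal formula for the block-(c) difference and merely collapses to $0$ in this case.

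It remains to justify that when $\widehat{V}_k^{}$ is $\widehat{J}_k^{}$-unitary but not unitary, $w_k^{}$ may be negative, positive, or zero. Here I would apply Lemma~\ref{l:3.1} to $\mathbf{x},\mathbf{y}$, after the reduction from $\widehat{V}_k^{}$ to $\widehat{V}_k^{}B^{\ast}$ noted just before that lemma (which leaves $\|[\mathbf{x}'\ \mathbf{y}']\|_F^{}$ unchanged): with $S=\|[\mathbf{x}\ \mathbf{y}]\|_F^2$ one gets $h_k^{}=S-\|[\mathbf{x}'\ \mathbf{y}']\|_F^2\in[(1-\cosh(2\psi)-|\sinh(2\psi)|)\,S,\ (1-\cosh(2\psi)+|\sinh(2\psi)|)\,S]$. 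By Lemma~\ref{l:3.2} the left endpoint is $\le 0$ and the right endpoint is $\ge 0$, both strictly so when $\psi\ne 0$ and $S>0$; since $\cosh(2\psi)+|\sinh(2\psi)|\to\infty$ as $|\psi|\to\infty$ while $|G_{k-1}^{}(p_k^{},q_k^{})|^2+|G_{k-1}^{}(q_k^{},p_k^{})|^2$ stays bounded, $h_k^{}$—hence $w_k^{}$—can be made arbitrarily negative in the expanding direction, is positive for $\psi$ of small magnitude or in the contracting direction, and can be zero in between.

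The only delicate point, and the one I would be most careful about, is the bookkeeping of the order of operations: verifying that in block (c) the submatrix is genuinely $\widehat{V}_k^{}$ applied to the original $G_{k-1}^{}$ entries (because $\widehat{U}_k^{\ast}$ never reaches those rows), and dually that in block (b) the submatrix is $\widehat{U}_k^{\ast}$ applied to original entries and then left alone, together with the fact that the block-(a) off-diagonal entries end up exactly zero. Everything else reduces to unitary invariance of the Frobenius norm and to Lemmas~\ref{l:3.1}--\ref{l:3.2}.
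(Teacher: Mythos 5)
Your decomposition and the paper's proof are essentially the same argument: both isolate the pivot positions (which contribute the two squared off-diagonal moduli, since $G_k^{}$ is explicitly zeroed there), dispose of the row block by unitary invariance of the Frobenius norm under $\widehat{U}_k^{\ast}$, observe that the untouched block contributes nothing, and attribute all of $h_k^{}$ to the action of $\widehat{V}_k^{}$ on the two shortened pivot columns, which is then controlled by Lemmas~\ref{l:3.1} and~\ref{l:3.2}. The one place where you diverge is the final claim that $w_k^{}$ can be negative, positive, or zero: the paper settles this constructively with three explicit real $2\times 2$ pivot matrices and accompanying column data realizing $w_k^{}<0$, $w_k^{}>0$, and $w_k^{}=0$ exactly, whereas you argue from the attainability of the bounds (via $\mathbf{y}=\pm e^{\mathrm{i}\beta}\mathbf{x}$) plus an implicit intermediate-value argument for the zero case. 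That reasoning can be made rigorous---$\psi$ and the non-pivot column norms are independently adjustable, and $h_k^{}$ varies continuously as $\mathbf{x}$ is scaled---but as written the ``can be zero in between'' step is asserted rather than exhibited; an explicit configuration (or a one-line continuity argument over a connected family of inputs) would close it. Everything else, including the careful bookkeeping of which transformation reaches which block and in what order, matches the paper.
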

\begin{proof}
  When $V_k$ is unitary, the statement of the Theorem~\ref{t:3.1} is a
  well-known property of the Kogbetliantz algorithm, and a consequence
  of $U_k^{\ast}$ also being unitary, as well as the Frobenius norm
  being unitary invariant.

  Else, if $V_k$ is not unitary, then observe that the elements of
  $G_k$ at the pivot positions (i.e., having their indices taken from
  the set $\{p_k,q_k\}$) do not contribute to $\off(G_k^{})$, since
  the off-diagonal elements at those positions are zero.  The change
  from $\off(G_{k-1}^{})$ to $\off(G_k^{})$ is therefore the sum of
  squares of the magnitudes of those elements, plus any change ($h_k$)
  happening outside the pivot positions, as in~\eqref{e:3.2}.

  The left transformation is unitary, and therefore the only two rows
  affected, $p_k$th and $q_k$th, shortened to have the pivot elements
  removed, keep their joint Frobenius norm unchanged from $G_{k-1}^{}$
  to $G_{k-1}'$.  Since the right transformation affects only the
  $p_k$th and the $q_k$th column, either of which intersect the
  $p_k$th and the $q_k$th row in the pivot positions only, there is no
  further change from $\off(G_{k-1}')$ to $\off(G_k^{})$ when the
  off-diagonal norm is restricted to the shortened and transformed
  rows, so a contribution to $h_k$ from the left transformation is
  zero.

  Therefore, only the right transformation is responsible for the
  value of $h_k$, which can be bounded by Lemma~\ref{l:3.1}, applied
  to the computed hyperbolic transformation $\widehat{V}_k$ and the
  $p_k$th and the $q_k$th column with the pivot elements removed from
  them.  The square of the joint Frobenius norm of the shortened
  columns might either fall or rise after the transformation, due to
  Lemma~\ref{l:3.2}.  If it falls, $h_k$ and thus $w_k$ is positive.
  If it rises, depending on the hyperbolic angle $\psi$ and on the
  off-diagonal elements, $h_k$ can become negative and so large in
  magnitude to push $w_k$ down to zero or below.

  For example, let $0<\epsilon\ll 1$ and observe that in the real case
  \begin{displaymath}
    \widehat{G}_{k-1}=
    \begin{bmatrix}
      1 & \epsilon-1\\
      0 & 0
    \end{bmatrix}
    \implies
    \widehat{V}_k=\cosh\psi
    \begin{bmatrix}
      1 & \epsilon-1\\
      1-\epsilon & -1
    \end{bmatrix},\quad
    \cosh\psi=\frac{1}{\sqrt{\epsilon(2-\epsilon)}},
  \end{displaymath}
  (see subsection~\ref{sss:2.2.2}).  Let $a$ and $b$ lie in $p_k$th
  and the $q_k$th column, respectively, in a row
  $\ell\notin\{p_k,q_k\}$, and let the other non-pivot elements be
  zero.  Then $a$ and $b$ are transformed to $\tilde{a}$ and
  $\tilde{b}$, respectively, where
  \begin{displaymath}
    \begin{bmatrix}
      \tilde{a} & \tilde{b}
    \end{bmatrix}=
    \begin{bmatrix}
      a & b
    \end{bmatrix}
    \widehat{V}_k=
    \begin{bmatrix}
      a+(1-\epsilon)b & a(\epsilon-1)-b
    \end{bmatrix}/\sqrt{\epsilon(2-\epsilon)}.
  \end{displaymath}
  If $a=b=1$, then
  $\tilde{a}=-\tilde{b}=\sqrt{2-\epsilon}/\sqrt{\epsilon}$, so
  \begin{displaymath}
    a^2-\tilde{a}^2+b^2-\tilde{b}^2=4(\epsilon-1)/\epsilon\approx-4/\epsilon,
  \end{displaymath}
  what is by magnitude far greater than $0^2+|\epsilon-1|^2\approx 1$,
  thus $w_k<0$.  Oppositely,
  \begin{displaymath}
    \widehat{G}_{k-1}=
    \begin{bmatrix}
      1 & 1-\epsilon\\
      0 & 0
    \end{bmatrix}
    \implies
    \widehat{V}_k=\cosh\psi
    \begin{bmatrix}
      1 & \epsilon-1\\
      \epsilon-1 & 1
    \end{bmatrix},\quad
    \cosh\psi=\frac{1}{\sqrt{\epsilon(2-\epsilon)}},
  \end{displaymath}
  while $\tilde{a}$ and $\tilde{b}$ are
  \begin{displaymath}
    \tilde{a}=(a+(\epsilon-1)b)/\sqrt{\epsilon(2-\epsilon)},\quad
    \tilde{b}=(a(\epsilon-1)+b)/\sqrt{\epsilon(2-\epsilon)}.
  \end{displaymath}
  If $a=b=1$, then
  $\tilde{a}=\tilde{b}=\sqrt{\epsilon}/\sqrt{2-\epsilon}$, so
  \begin{displaymath}
    a^2-\tilde{a}^2+b^2-\tilde{b}^2=4(1-\epsilon)/(2-\epsilon)\approx 2,
  \end{displaymath}
  thus $w_k>0$.  Finally, let $a>0$, $b=0$, and assume
  $\check{V}_k=I_2$ for simplicity.  Then
  \begin{displaymath}
    \begin{bmatrix}
      \tilde{a} & \tilde{b}
    \end{bmatrix}=
    \begin{bmatrix}
      a & 0
    \end{bmatrix}
    \begin{bmatrix}
      \cosh\psi & \sinh\psi\\
      \sinh\psi & \cosh\psi
    \end{bmatrix}=
    \begin{bmatrix}
      a\cosh\psi & a\sinh\psi
    \end{bmatrix},
  \end{displaymath}
  what, together with $1=\cosh^2\psi-\sinh^2\psi$, gives
  \begin{displaymath}
    a^2-\tilde{a}^2+b^2-\tilde{b}^2=a^2(1-\cosh^2\psi-\sinh^2\psi)=-2a^2\sinh^2\psi,
  \end{displaymath}
  what is equal to $-2a^2$ for $|\sinh\psi|=1$.  Setting
  \begin{displaymath}
    \widehat{G}_{k-1}=
    \begin{bmatrix}
      2a & a\sqrt{2}\\
      0 & 0
    \end{bmatrix}
    \implies
    \widehat{V}_k=
    \begin{bmatrix}
      \sqrt{2} & -1\\
      -1 & \sqrt{2}
    \end{bmatrix},\quad
    \sinh\psi=-1.
  \end{displaymath}
  The sum of squares of the off-diagonal elements of
  $\widehat{G}_{k-1}$ is $2a^2$ and thus $w_k=0$.
  \qed%
\end{proof}

A sequence of matrices $(G_k)_{k\ge 0}$ converges to a diagonal form
if and only if $(\off(G_k^{}))_{k\ge 0}$ tends to zero.  The sequence
$(\off(G_k^{}))_{k\ge 0}$ does not have to be monotonically
decreasing, i.e., $w_k$ can be negative for some $k$ in the HSVD case,
unlike in the ordinary Kogbetliantz algorithm.  That significantly
complicates any reasoning about convergence in theory, and attainment
of (a satisfactory rate of) convergence in practice.  To aid the
latter, the pivot weights $w_k$ from~\eqref{e:3.2} should be kept as
high as possible, by a careful choice of the pivot submatrix among all
admissible transformation candidates in each step.  In the SVD
computation, choosing a candidate with the maximal weight guarantees
convergence~\cite{Oksa-et-al-19}, and is trivially accomplished since
the weights are directly computable, due to $h_k=0$.  In the
hyperbolic case, $h_k$ from~\eqref{e:3.3} cannot be known in advance,
without performing the right (column) transformation, and it cannot be
estimated (roughly, due to Lemma~\ref{l:3.2}) by means of
Lemma~\ref{l:3.1} without computing the associated $\widehat{V}_k$ and
occasionally recomputing the column norms, what is of the same linear
complexity as transforming those columns.

The computation of $h_k$ is therefore preferable to an estimation.
In each step, it has to be performed for \emph{all} admissible
transformation candidates, and non-trivially for all indices $p$ and
$q$ such that $p<q$, $j_{pp}\ne j_{qq}$, and the associated
$\widehat{V}_k\ne I_2$ (if $\widehat{V}_k$ is not defined, let
$h_k=-\infty$ instead of halting), i.e., at most $n_0^2/4$ times, if
$J_0$ has the same number of positive and negative signs.  The left
transformation by $\widehat{U}_k^{\ast}$ is not needed here, so only
the right one by $\widehat{V}_k$ has to have a virtual variant, that
does not change the elements of $G_{k-1}$, but for each row
$i\notin\{p,q\}$ computes what would $G_k(i,p)$ and $G_k(i,q)$ be from
$G_{k-1}(i,p)$, $G_{k-1}(i,q)$, and $\widehat{V}_k$, and updates $h_k$
using~\eqref{e:3.3}.  That can be done accurately by applying twice
for each $i\notin\{p,q\}$ the accumulation rule
\begin{displaymath}
  \rho\mathop{:=}\rho+(a^2-\tilde{a}^2)=\fma(a-\tilde{a},a+\tilde{a},\rho),
\end{displaymath}
once for $a=|G_{k-1}(i,p)|$ and $\tilde{a}=|G_k(i,p)|$, and again for
$a=|G_{k-1}(i,q)|$ and $\tilde{a}=|G_k(i,q)|$.  If $\rho$ is
initialized to the sum of squares of the magnitudes of the
off-diagonal elements of $\widehat{G}_{k-1}$ instead of to zero, then
the final $\rho$ is $w_k$ instead of $h_k$.

All virtual transformations in a step are mutually independent, so
they should be performed in parallel.  With enough memory available
($O(n_0^3)$ in parallel), the computed elements of
$\widehat{U}_k^{\ast}$, $\widehat{V}_k$, and $G_k$ could be stored,
separately for each virtual transformation, and reused if the
corresponding candidate has been selected as a pivot.  Sequentially,
only the data ($O(n_0)$ values) of a candidate with the maximal weight
should be stored.  In the tested prototype of the algorithm the
virtual transformations are performed in parallel, but all data
generated, save the weights, are discarded.

It now emerges that each step requires at least $O(n_0^2)$, and at
most $O(n_0^3)$ operations just for computing all $w_k$.  The
complexity of the whole HSVD algorithm is therefore quartic in $n_0$
in the general case, far worse than the usual cubic algorithms for the
ordinary SVD.  It is legitimate to ask what impedes development of a
Kogbetliantz-type HSVD algorithm with a cubic complexity.  For that,
the pivot strategy should either ignore the weights and select the
pivots in a prescribed order (as, e.g., the row-cyclic or the
column-cyclic serial pivot strategies do), or compute all weights in a
step with no more than a quadratic number of operations (e.g., by
ignoring $h_k$ in~\eqref{e:3.2} and calculating the rest of $w_k$).
Either approach works well sometimes, but the former failed in the
numerical experiments when $n_0$ went up to $2000$, and the latter
even with $n_0$ around $100$, both with a catastrophic increase
(overflow) of the off-diagonal norm.  It remains an open question
whether employing in both cases a much wider floating-point datatype
(precision-wise as well as exponent-wise) could save the computation
and eventually lead to convergence, but that is of more interest to
theory than practice.  The pivot strategy motivated here and described
in detail in section~\ref{s:4}, however slow, is designed to keep the
growth of the off-diagonal norm minimal (at least with a single pivot)
when it is unavoidable, and can be used for the matrix orders of up to
a few thousands with enough parallelism at hand.  A faster but at
least equally safe pivot strategy would make the $J$-Kogbetliantz
algorithm competitive in performance with the pointwise one-sided
hyperbolic Jacobi method.

It is remarkable that the extensive numerical tests conducted with
many variants of the parallel blocked one-sided hyperbolic Jacobi
method, all of them with some prescribed cyclic pivot strategy, either
on CPU~\cite{Singer-et-al-12a,Singer-et-al-12b} or on
GPU~\cite{Novakovic-15,Novakovic-Singer-11}, have never shown an
indication of a dangerous off-diagonal norm growth, even though the
hyperbolic angle of a $2\times 2$ transformation might be of a large
magnitude there as well, at least in theory.  It may be interesting to
look further into why that did not happen (and probably is hard to
make happen) in practice, unlike with the Kogbetliantz-type algorithm,
which requires such a complex pivot strategy to reduce the norm
growth.
\subsection{Floating-point considerations}\label{ss:3.3}
The sum of squares from~\eqref{e:3.2} can overflow, as well as each of
the squares, leading to $w_k=\infty$.  A dynamic rescaling of the
whole $G_{k-1}$ by an appropriate power of two could mitigate that
issue, but with a risk that the smallest values by magnitude become
subnormal and lose precision.   However, if the weights are computed
in a wide enough floating-point datatype (e.g., using the Intel's
80-bit extended), no overflow can occur.  Similarly, one or both
squares can underflow (a fact to be relied upon in section~\ref{s:5})
to a point of becoming zero(s).  Then, if $h_k=0$, the only way of
avoiding $w_k=0$, without computing in a wider datatype, is scaling
$G_{k-1}$ upwards, thus risking overflow of the largest elements by
magnitude.  As a partial remedy, the augmented weights from
subsection~\ref{ss:4.1} are always distinct and well-ordered, so a
deterministic pivot selection is possible even with some (or all)
weights being $\infty$ or underflowing.

There is no rule of thumb how to properly prescale $G_0$, so that such
issues, as well as the potential overflows due to the hyperbolic
transformations, do not needlessly occur.  Monitoring the computed
weights can indicate should the latter problems be immediately avoided
by downscaling $G_{k-1}$.  In the tested prototype of the algorithm the
dynamic scaling of the whole matrix, unlike the scaling from
subsection~\ref{sss:2.1.1}, has not been implemented, but should
otherwise be if robustness is paramount.
\section{Dynamic pivot selection based on weights}\label{s:4}
A dynamic pivot strategy (DPS in short) based on block weights was
introduced in~\cite{Becka-Oksa-Vajtersic-02} for the two-sided
block-Jacobi SVD algorithm (as the block-Kogbetliantz algorithm is
also called), while the global and the asymptotic quadratic
convergence of such a coupling was proven in~\cite{Oksa-et-al-19} for
the serial (a single block pair per step) and the parallel (multiple
block pairs per step) annihilation.  As the pointwise Jacobi
algorithms are but a special case of the block ones, when the blocks
(matrices) contain only one, scalar element, all properties of the
dynamic pivoting hold in that context as well.

However, a DPS used in the pointwise Kogbetliantz-type HSVD algorithm
differs in several aspects from the one for the SVD.  A weight, i.e.,
the amount of the off-diagonal norm reduction, in the latter is finite
(up to a possible floating-point overflow) and non-negative, while in
the former it can be of arbitrary sign and infinite.  Yet, the goal in
both cases is the same: to reduce the off-diagonal norm in each step
as much as possible.  In the latter the off-diagonal norm growth is
impossible, while in the former it is sometimes necessary, but is
still kept as low as practicable.

Another important difference is in handling a situation when some or
all weights are the same.  In the former, a concept of augmented
weight is introduced, as follows.
\subsection{DPS in the sequential case}\label{ss:4.1}
\begin{definition}\label{d:4.1}
  Let the weight of a $2\times 2$ submatrix of $G_{k-1}$ at the
  intersection of the $p$th and the $q$th row with the $p$th and the
  $q$th column be computed according to~\eqref{e:3.2} if that
  submatrix is a transformation candidate.  Else, if the submatrix
  does not need to be transformed, or cannot be transformed due to at
  least one its elements being non-finite, define its weight as a
  quiet $\mathtt{NaN}$.  Let a triple $w_{pq}^{[k]}=(w,p,q)$ be called
  an \emph{augmented weight}, where $w$ is the weight of the submatrix
  induced by $(p,q)$.  Also, let
  $\mathbf{w}_k^{}=\{w_{pq}^{[k]}\mid 1\le p<q\le n_0\}$ be the set of
  all augmented weights in the $k$th step.
\end{definition}

For any given $k$, a total order $\preceq$ can be defined on the
augmented weights that makes all of them distinct, even though the
weights themselves may be equal.

\begin{definition}\label{d:4.2}
  Let, for some $k$, $a$ and $b$ be two augmented weights in
  $\mathbf{w}_k$, and let them be considered equal, denoted as $a=b$,
  if and only if their corresponding components are equal, i.e.,
  $a.w=b.w$, $a.p=b.p$, and $a.q=b.q$.  Contrary to the usual
  definition of $\mathtt{NaN}$, in this context let
  $\mathtt{NaN}=\mathtt{NaN}$ and $\mathtt{NaN}<c$ for any other $c$.
  Let $\preceq$ be the union of the relations $\prec$ and $=$, where
  $a\prec b$ if and only if
  \begin{compactenum}
  \item $a.w>b.w$, or
  \item $a.w=b.w$ and $a.q-a.p>b.q-b.p$, or
  \item $a.w=b.w$, $a.q-a.p=b.q-b.p$, and $a.q>b.q$.
  \end{compactenum}
\end{definition}

\begin{proposition}\label{p:4.1}
  The relation $\preceq$ from Definition~\ref{d:4.2} makes
  $\mathbf{w}_k$ well ordered; specifically, every non-empty subset of
  $\mathbf{w}_k$, including $\mathbf{w}_k$, has a unique
  $\preceq$-smallest element.
\end{proposition}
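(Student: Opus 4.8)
The plan is to reduce Proposition~\ref{p:4.1} to the elementary fact that a finite set equipped with a total (linear) order is well ordered, with a unique least element in every non-empty subset. Two preliminary observations make this reduction legitimate. First, $\mathbf{w}_k$ is finite, since by Definition~\ref{d:4.1} its elements are indexed by the pairs $(p,q)$ with $1\le p<q\le n_0$, of which there are $\binom{n_0}{2}$. Second, the assignment $(p,q)\mapsto w_{pq}^{[k]}$ is injective, so two augmented weights of $\mathbf{w}_k$ are equal in the sense of Definition~\ref{d:4.2} if and only if their $(p,q)$ components coincide; in particular distinct elements of $\mathbf{w}_k$ always differ in at least one of the last two coordinates, even if their weight components happen to agree. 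Granting these, it suffices to show that $\preceq$ is a total order: reflexivity, antisymmetry, and transitivity of $\preceq=\prec\cup{=}$ follow from asymmetry and transitivity of $\prec$ in the standard way, and then a trivial finite induction (keep the running $\prec$-minimum while scanning the elements of a non-empty subset $S\subseteq\mathbf{w}_k$) produces a $\preceq$-smallest element of $S$, whose uniqueness is exactly antisymmetry.

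The core step is to show that $\prec$ is a strict total order on $\mathbf{w}_k$, i.e.\ that it is irreflexive, transitive, and trichotomous. Here I would observe that clauses~1--3 of Definition~\ref{d:4.2} describe precisely the lexicographic strict order associated with the key $(w,\,q-p,\,q)$, where the three coordinates are ordered by, respectively, the \emph{reverse} of the (extended) order on weights, the reverse of the usual order on $q-p\in\mathbb{Z}$, and the reverse of the usual order on $q\in\mathbb{Z}$: clause~1 reads "first coordinate strictly larger", clause~2 reads "first equal and second strictly larger", clause~3 reads "first two equal and third strictly larger". A lexicographic product of strict total orders is again a strict total order, so irreflexivity and transitivity come for free coordinatewise; the only point that uses the injectivity remark is trichotomy, in the case where all three key coordinates of $a$ and $b$ agree. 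But then $a.q=b.q$ together with $a.q-a.p=b.q-b.p$ forces $a.p=b.p$, and with $a.w=b.w$ the whole triple agrees, so $a=b$ in $\mathbf{w}_k$; hence for $a\ne b$ exactly one of $a\prec b$ and $b\prec a$ holds.

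The one genuinely delicate point, and the step I expect to need the most care, is that the \emph{first} key coordinate really does carry a strict total order, since weights take values in $\mathbb{R}\cup\{\pm\infty\}$ together with the symbol $\mathtt{NaN}$, which does not compare normally. Here I would invoke verbatim the nonstandard convention fixed in Definition~\ref{d:4.2}: $\mathtt{NaN}=\mathtt{NaN}$ and $\mathtt{NaN}<c$ for every other value $c$. Under this convention the admissible weight values form a chain with $\mathtt{NaN}$ as its minimum and $+\infty$ as its maximum, so the relation "$a.w>b.w$" used in clause~1 is irreflexive, transitive, and trichotomous on $\mathbf{w}_k$, which is all the lexicographic argument requires; it is worth noting explicitly that this makes the $\mathtt{NaN}$-weighted pairs (the non-transformable or already-acceptable submatrices) the $\preceq$-largest elements, consistent with their never being chosen as pivots. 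Combining the pieces, $\prec$ is a strict total order on the finite set $\mathbf{w}_k$, hence $\preceq$ is a total order, hence $\mathbf{w}_k$ is well ordered and every non-empty subset has a unique $\preceq$-smallest element, which is the assertion of Proposition~\ref{p:4.1}. Beyond the lexicographic reformulation and the handling of $\mathtt{NaN}$, the remaining verifications are routine bookkeeping with no estimates or limiting arguments involved.
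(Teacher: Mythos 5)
Your proof is correct and follows essentially the same route as the paper's: reduce the claim to ``a finite set with a total order is well ordered'' and then check totality of $\preceq$. The paper dismisses the totality check with ``it is easy to verify'' and instead spends its words interpreting conditions 1--3 (largest weight, farthest band, largest column index), whereas you supply the verification explicitly via the lexicographic key $(w,\,q-p,\,q)$ together with the $\mathtt{NaN}$ convention and the injectivity of $(p,q)\mapsto w_{pq}^{[k]}$ --- a welcome filling-in of the omitted details, not a different argument.
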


\begin{proof}
  It is easy to verify that $\preceq$ is a total order on
  $\mathbf{w}_k$.  Since $\mathbf{w}_k$ is finite, it is well
  ordered by $\preceq$.  If all weights in $S$,
  $\emptyset\ne S\subseteq\mathbf{w}_k$, are different, the smallest
  element is the one with the largest weight (due to condition~1 from
  Definition~\ref{d:4.2}).  The quantities $a.q-a.p$ and $b.q-b.p$
  indicate a band, i.e., a sub/super-diagonal of $G_{k-1}$ at which
  $(q,p)$ and $(p,q)$ lie, respectively, with the main diagonal being
  band $0$.  If several elements of $S$ have the same maximal weight,
  the smallest element is the one among them in the farthest band
  (condition~2).  If more than one such element exists, the smallest
  is the one lying lowest, i.e., with the largest column index
  (condition~3).
  \qed%
\end{proof}

The following Corollary~\ref{c:4.1} is a direct consequence of
Proposition~\ref{p:4.1} and defines the DPS in the sequential case,
i.e., when only one pivot is transformed in each step.

\begin{corollary}\label{c:4.1}
  Let
  $\widetilde{\mathbf{w}}_k^{}=\mathbf{w}_k^{}\setminus\{a\mid a.w=\mathtt{NaN}\}$
  be a set of augmented weights such that the weights themselves are
  not $\mathtt{NaN}$.  If $\widetilde{\mathbf{w}}_k=\emptyset$, no
  transformations are possible and the algorithm stops with $N=k$.
  Else, let $\hat{a}$ be the $\preceq$-smallest element of
  $\widetilde{\mathbf{w}}_k$.  If $\hat{a}.w=-\infty$, no
  transformation is valid and the algorithm halts with an error.
  Else, $(\hat{a}.p,\hat{a}.q)$ are the indices of a single pivot to
  be chosen in the $k$th step.
\end{corollary}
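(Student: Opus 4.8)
The plan is to read the corollary as an essentially immediate consequence of Proposition~\ref{p:4.1}, once the two special weight values $\mathtt{NaN}$ and $-\infty$ are matched with the operational situations they are meant to encode. Since $\widetilde{\mathbf{w}}_k$ is by construction a subset of $\mathbf{w}_k$, the first step is to invoke Proposition~\ref{p:4.1}: if $\widetilde{\mathbf{w}}_k\neq\emptyset$, then it has a unique $\preceq$-smallest element, so $\hat a$ is well defined, and by the very definition of $\preceq$ this $\hat a$ is an augmented weight of maximal weight $\hat a.w$ among the members of $\widetilde{\mathbf{w}}_k$ (the band and row conditions only serve to make the choice unique when several candidates share that maximal weight). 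This already settles determinism of the selection whenever a selection is made.

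Next I would pin down the meaning of the three branches. By Definition~\ref{d:4.1}, an augmented weight $a$ satisfies $a.w=\mathtt{NaN}$ precisely when the $2\times2$ submatrix induced by $(a.p,a.q)$ either is not a transformation candidate in the sense of Definition~\ref{d:1.2}, or has a non-finite entry and hence cannot be transformed. Therefore $\widetilde{\mathbf{w}}_k$ is exactly the collection of index pairs that are genuine transformation candidates with finite entries. If $\widetilde{\mathbf{w}}_k=\emptyset$, there are no transformation candidates at all, and by the convention recorded just after Definition~\ref{d:1.2} the algorithm halts successfully with the (approximate) HSVD of $G$ obtained, which is what $N=k$ records; this is the first assertion of the corollary.

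For the second branch I would recall from subsection~\ref{ss:3.2} that $h_k$, and hence $w_k$ in~\eqref{e:3.2}, is set to $-\infty$ exactly when the $2\times2$ HSVD, and thus $\widehat V_k$, is undefined or is rejected as unsafe, instead of halting immediately. Because $\hat a$ carries the largest weight in $\widetilde{\mathbf{w}}_k$, the condition $\hat a.w=-\infty$ forces every finite-entry candidate to have weight $-\infty$, i.e., no admissible pivot pair admits a defined, safe transformation; consistently with the opening sentence of section~\ref{s:3}, the algorithm then halts with an error. In the remaining case $\hat a.w\in\mathbb{R}\cup\{+\infty\}$ (the value $+\infty$ being possible, per subsection~\ref{ss:3.3}, through overflow of the sum of squares), so $(\hat a.p,\hat a.q)$ is a transformation candidate whose $\widehat V_k$ is defined and which, among all such candidates, maximizes the weight; by Proposition~\ref{p:4.1} this maximizer is unique after the tie-break, hence it is the well-defined single pivot of step $k$, which is the last assertion.

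The only genuinely delicate point—and the one I would spell out with care rather than treat as routine—is the bookkeeping that ties the symbolic values $\mathtt{NaN}$ and $-\infty$ back to the operational statements scattered across Definition~\ref{d:1.2}, section~\ref{s:3}, and subsection~\ref{ss:3.2}: one must verify that ``not a transformation candidate, or an entry is non-finite'' is precisely the $\mathtt{NaN}$ case, that ``a candidate but $\widehat V_k$ undefined or unsafe'' is precisely the $-\infty$ case, and that these, together with the finite/$+\infty$ case, are exhaustive and mutually exclusive, with nothing slipping through. Once that dictionary is made explicit, the corollary is just a restatement of Proposition~\ref{p:4.1} in the language of the algorithm.
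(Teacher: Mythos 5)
Your proposal is correct and matches the paper's treatment: the paper offers no separate proof, stating only that the corollary is a direct consequence of Proposition~\ref{p:4.1}, and your argument is exactly that consequence spelled out, with the dictionary between the symbolic weights ($\mathtt{NaN}$ for non-candidates or non-finite entries, $-\infty$ for undefined or unsafe hyperbolic transformations) and the three branches made explicit. The key observations—that the $\preceq$-smallest element maximizes the weight, that its uniqueness follows from the well-ordering, and that $\hat{a}.w=-\infty$ forces all surviving candidates to be invalid—are all the paper intends.
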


Finding the smallest element of $\widetilde{\mathbf{w}}_k$ is linear
in $c=|\widetilde{\mathbf{w}}_k|$, i.e., at most quadratic in $n_0$,
if a na\"{\i}ve method is used.  However, any $t$ disjoint subsets of
$\widetilde{\mathbf{w}}_k$, each of them of size at most
$\lceil c/t\rceil$ and at least one less that, can be linearly
searched for their smallest elements, all of them in parallel.  The
smallest elements thus found can in turn be $\preceq$-reduced in
parallel with $\lceil\log_2 t\rceil$ complexity to get the smallest
element overall.

Furthermore, observe that only the weights in the pivot rows and
columns change after a step.  Then, in the next step, the weights in
the changed positions have to be recomputed and compared with the
unchanged weights in the remaining part of the matrix, for which the
$\preceq$-smallest element can already be found in the previous step.
Therefore, in each step two elements of $\widetilde{\mathbf{w}}_k$
have to be found: the $\preceq$-smallest one $a$, and its closest
$\preceq$-successor $b$ such that
$\{a.p,a.q\}\cap\{b.p,b.q\}=\emptyset$.  Finding such $a$ and $b$
would be easiest if $\widetilde{\mathbf{w}}_k$ would have already been
sorted $\preceq$-ascendingly.  But if such $a$ and $b$ are found, they
define two pivots that can both be transformed in parallel, i.e., in a
multi-step of length two.  Repeating the observation of this
paragraph, both a sketch of a method and an argument for the parallel
DPS emerges, where a sequence of pivots, all with their indices
disjoint, is incrementally built to be transformed in a multi-step.
The case of a single pivot per step is here abandoned in favor of the
parallel, multi-step case, albeit it can be noticed that some pivots
in such a multi-step can lead to the off-diagonal norm growth when the
$\preceq$-smallest one does not.  The sequential case is thus locally
(i.e., in each step, but not necessarily globally) optimal with
respect to the change of the off-diagonal norm, but the parallel one
may not be.
\subsection{DPS in the multi-step case}\label{ss:4.2}
For a multi-step $\mathbf{k}$, let the augmented weights
$w_{pq}^{[\mathbf{k}]}$ and the set $\mathbf{w}_{\mathbf{k}}$ of them
be defined as in Definition~\ref{d:4.1}, with the smallest
$k\in\mathbf{k}$.  Definition~\ref{d:4.2}, Proposition~\ref{p:4.1},
and Corollary~\ref{c:4.1} are then modified accordingly.  Also, let
$\widehat{\mathbf{w}}_{\mathbf{k}}$ be a $\preceq$-ascendingly sorted
array of the elements of
$\widetilde{\mathbf{w}}_{\mathbf{k}}\setminus\{a\mid a.w=-\infty\}$.
An option to get $\widehat{\mathbf{w}}_{\mathbf{k}}$ from
$\widetilde{\mathbf{w}}_{\mathbf{k}}$ is the parallel merge sort.  In
the prototype implementation, the Baudet--Stevenson odd-even sort with
merge-splitting of the subarrays~\cite{Baudet-Stevenson-78} is used,
since it is simple and keeps $\lceil t/2\rceil$ tasks active at any
given time, even though its worst-case complexity is quadratic.  Both
choices require a work array of $c$ augmented weights, but that
scratch space can be reused elsewhere.  For $t=1$, a sequential merge
(or quick) sort is applicable.

\begin{definition}\label{d:4.3}
  Let $\mathbf{S}_{\mathbf{k}}$ be the set of all $\preceq$-ascending
  sequences of length at most $|\mathbf{k}|$ of the augmented weights
  from $\widehat{\mathbf{w}}_{\mathbf{k}}$ with non-intersecting
  indices, i.e., of all (not necessarily contiguous) subarrays of
  $\widehat{\mathbf{w}}_{\mathbf{k}}$ of length at most
  $|\mathbf{k}|$, such that for any two elements $a$ and $b$ from a
  subarray holds $\{a.p,a.q\}\cap\{b.p,b.q\}=\emptyset$.  Let
  $S_{\mathbf{k}}\in\mathbf{S}_{\mathbf{k}}$ be arbitrary, $m\ge 1$ be
  the length of $S_{\mathbf{k}}$, and define the following functions
  of $S_{\mathbf{k}}$,
  \begin{displaymath}
    w(S_{\mathbf{k}}^{})=\sum_{\ell=1}^mS_{\mathbf{k}}^{}(\ell).w,\quad
    \mathbf{o}(S_{\mathbf{k}}^{})=(l_{\ell}^{}\mid S_{\mathbf{k}}^{}(\ell)=\widehat{\mathbf{w}}_{\mathbf{k}}^{}(l_{\ell}^{}))_{\ell=1}^m,
  \end{displaymath}
  as its weight and as a sequence of indices that its elements have in
  $\widehat{\mathbf{w}}_{\mathbf{k}}$, respectively.
\end{definition}

It suffices to restrict Definition~\ref{d:4.3} to the sequences of
length $m>0$ only, since $|\widehat{\mathbf{w}}_{\mathbf{k}}|=0$
implies that no valid transformations are possible, and the execution
halts.

\begin{definition}\label{d:4.4}
  Let $|\mathbf{k}|$, $\widehat{\mathbf{w}}_{\mathbf{k}}$, and $\ell$
  such that $1\le\ell\le|\widehat{\mathbf{w}}_{\mathbf{k}}|$ be
  given.  Then, a \emph{parallel ordering}
  $O_{\mathbf{k}}^{\ell}\in\mathbf{S}_{\mathbf{k}}^{}$ is the sequence
  of maximal length, but not longer than $|\mathbf{k}|$, such that
  $O_{\mathbf{k}}^{\ell}(1)=\widehat{\mathbf{w}}_{\mathbf{k}}^{}(\tau_1)$,
  with $\tau_1=\ell$, and
  $O_{\mathbf{k}}^{\ell}(l)=\widehat{\mathbf{w}}_{\mathbf{k}}^{}(\tau_l)$
  for $l>1$, where $\tau_l$ is the smallest index of an element of
  $\widehat{\mathbf{w}}_{\mathbf{k}}$ such that
  \begin{equation}
    \{\widehat{\mathbf{w}}_{\mathbf{k}}(\tau_l).p,\widehat{\mathbf{w}}_{\mathbf{k}}(\tau_l).q\}\cap\{\widehat{\mathbf{w}}_{\mathbf{k}}(\tau_i).p,\widehat{\mathbf{w}}_{\mathbf{k}}(\tau_i).q\}=\emptyset,
    \label{e:4.2}
  \end{equation}
  for all $\tau_i$ such that $1\le i<l$.  If $O_{\mathbf{k}}^{\ell}$
  is of length $|\mathbf{k}|$, it is denoted by
  $PO_{\mathbf{k}}^{\ell}$.  A parallel DPS is a pivot strategy that
  for each $\mathbf{k}$ finds $O_{\mathbf{k}}^{\ell}$, given an
  admissible $\ell$.
\end{definition}

Given an admissible $\ell$, $O_{\mathbf{k}}^{\ell}$ from
Definition~\ref{d:4.4} exists and is unique.  Let its length be
$m_{\ell}$.  Then,
$\mathbf{o}(O_{\mathbf{k}}^{\ell})=(\tau_l^{})_{l=1}^{m_{\ell}^{}}$.
Taking the maximal $m_{\ell}$ possible reflects an important choice of
having most possible pivots per each multi-step transformed in
parallel, even though it might imply that $w(O_{\mathbf{k}}^{\ell})$
is smaller than it would have been if only the first
$m_{\ell}'<m_{\ell}^{}$ augmented weights were left in
$O_{\mathbf{k}}^{\ell}$.  Also, $|\mathbf{k}|$ should be considered to
stand for the \emph{desired} number of steps in $\mathbf{k}$, until a
parallel ordering has been found as described below and the actual,
maybe lower, number of steps has been determined.

Algorithm~\ref{a:4.1} sequentially constructs a parallel ordering from
Definition~\ref{d:4.4} for a given index $\ell$ of
$\widehat{\mathbf{w}}_{\mathbf{k}}$.  It can also be constructed in
parallel with Algorithm~\ref{a:4.2}.  Definition~\ref{d:4.4} indicates
validity of Algorithms~\ref{a:4.1} and \ref{a:4.2}.  All parallel
constructs from here on in the paper are the OpenMP~\cite{OpenMP-18}
ones, acting on the shared memory.

\begin{algorithm}[hbtp]
  \SetKw{Break}{break}
  \SetKw{Downto}{downto}
  \KwIn{$\widehat{\mathbf{w}}_{\mathbf{k}}$, $|\mathbf{k}|$, $n_0$, and $\ell$, $1\le\ell\le|\widehat{\mathbf{w}}_{\mathbf{k}}|$.}
  \KwOut{$O_{\mathbf{k}}^{\ell}$ of length at most $|\mathbf{k}|$, $1\le|\mathbf{k}|\le\lfloor n_0/2\rfloor$.}
  \KwData{a Boolean array $\text{\sc r}$ allocated on the thread's stack (faster), with $n_0$ elements initialized to $\bot$.}
  $j:=\ell$\tcp*[r]{start the search within $\widehat{\mathbf{w}}_{\mathbf{k}}$ from the index $\ell$}
  \For(\tcp*[f]{each iteration appends an augmented weight to $O_{\mathbf{k}}^{\ell}$}){$i:=1$ \KwTo $|\mathbf{k}|$}{
    $O_{\mathbf{k}}^{\ell}(i)=\widehat{\mathbf{w}}_{\mathbf{k}}^{}(j)$,\quad$j:=j+1$\tcp*[r]{append the $j$th augmented weight from $\widehat{\mathbf{w}}_{\mathbf{k}}^{}$}
    $\text{\sc r}(O_{\mathbf{k}}^{\ell}(i).p):=\top$,\quad$\text{\sc r}(O_{\mathbf{k}}^{\ell}(i).q):=\top$\tcp*[r]{record $O_{\mathbf{k}}^{\ell}(i).p$ and $O_{\mathbf{k}}^{\ell}(i).q$ as selected}
    \While(\tcp*[f]{search the remaining part of $\widehat{\mathbf{w}}_{\mathbf{k}}$ for the next $\widehat{\mathbf{w}}_{\mathbf{k}}(j)$}){$j\le|\widehat{\mathbf{w}}_{\mathbf{k}}|$}{
      $\text{\sc f}:=\text{\sc r}(\widehat{\mathbf{w}}_{\mathbf{k}}(j).p)\vee\text{\sc r}(\widehat{\mathbf{w}}_{\mathbf{k}}(j).q)$\tcp*[r]{check if $\widehat{\mathbf{w}}_{\mathbf{k}}^{}(j).p$ or $\widehat{\mathbf{w}}_{\mathbf{k}}^{}(j).q$ are selected}
      \lIf(\tcp*[f]{if not, success; exit the loop}){$\neg\text{\sc f}$}{\Break}
      $j:=j+1$\tcp*[r]{else, try with the next $j$}
    }
    \lIf(\tcp*[f]{exit if the next $\widehat{\mathbf{w}}_{\mathbf{k}}(j)$ cannot be found ($|\mathbf{k}|:=i$)}){$j>|\widehat{\mathbf{w}}_{\mathbf{k}}|$}{\Break}
  }
  \caption{Computing a parallel ordering sequentially.}
  \label{a:4.1}
\end{algorithm}

Algorithm~\ref{a:4.2} is the one chosen for the prototype
implementation when $t>1$ (as was the case in the tests), with a
fallback to Algorithm~\ref{a:4.1} when $t=1$.

\begin{algorithm}[hbtp]
  \SetKwInput{Use}{Use}
  \SetKwFor{ParallelFor}{for}{do {\rm in} parallel {\rm with} threads$(t)${\rm ,} shared$(\widehat{\mathbf{w}}_{\mathbf{k}})${\rm ,} reduction$(\min\colon k)$}{end parallel for}
  \SetKw{Break}{break}
  \SetKwFunction{IsNotNaN}{isNotNaN}
  \SetKwFunction{QNaN}{qNaN}
  \Use{\QNaN{$p$} returns a quiet $\mathtt{NaN}$ with its payload set to $p$; $\IsNotNaN{x}=\top$ iff $x$ is not a $\mathtt{NaN}$.}
  \KwIn{$\widehat{\mathbf{w}}_{\mathbf{k}}$, $|\mathbf{k}|$, $n_0$, $t$, and $\ell$, $1\le\ell\le|\widehat{\mathbf{w}}_{\mathbf{k}}|$.}
  \KwOut{$O_{\mathbf{k}}^{\ell}$ of length at most $|\mathbf{k}|$, $1\le|\mathbf{k}|\le\lfloor n_0/2\rfloor$.}
  $i:=\ell$,\quad$l:=0$\tcp*[r]{$i$ is the current index into $\widehat{\mathbf{w}}_{\mathbf{k}}$}
  \While(\tcp*[f]{$l$ is the current length of $O_{\mathbf{k}}^{\ell}$}){$l<|\mathbf{k}|$}{
    $l:=l+1$,\quad$O_{\mathrm{k}}^{\ell}(l)=\widehat{\mathbf{w}}_{\mathbf{k}}^{}(i)$\tcp*[r]{append the current augmented weight to $O_{\mathbf{k}}^{\ell}$}
    \lIf(\tcp*[f]{return $O_{\mathbf{k}}^{\ell}$ if it is complete}){$l\ge|\mathbf{k}|$}{\Break}
    $a_p:=O_{\mathbf{k}}^{\ell}(l).p$,\quad$a_q:=O_{\mathbf{k}}^{\ell}(l).q$\tcp*[r]{shorthands for the indices}
    $k:=|\widehat{\mathbf{w}}_{\mathbf{k}}|+1$\tcp*[r]{$k>i$ will be $\min j$ s.t. $\widehat{\mathbf{w}}_{\mathbf{k}}(j)$ does not collide with $O_{\mathbf{k}}^{\ell}(l)$}
    \ParallelFor{$j=i+1$ \KwTo $|\widehat{\mathbf{w}}_{\mathbf{k}}|$}{
      \If(\tcp*[f]{$\widehat{\mathbf{w}}_{\mathbf{k}}(j)$ is alive if its weight is not a NaN}){\IsNotNaN{$\widehat{\mathbf{w}}_{\mathbf{k}}(j).w$}}{
        $b_p:=\widehat{\mathbf{w}}_{\mathbf{k}}(j).p$,\quad$b_q:=\widehat{\mathbf{w}}_{\mathbf{k}}(j).q$\tcp*[r]{check if\ldots}
        $\text{\sc c}:=(a_p=b_p)\vee(a_p=b_q)\vee(a_q=b_p)\vee(a_q=b_q)$\tcp*[r]{$\widehat{\mathbf{w}}_{\mathbf{k}}(j)$ \&\ $O_{\mathbf{k}}^{\ell}(l)$ collide}
        \leIf(\tcp*[f]{kill $\widehat{\mathbf{w}}_{\mathbf{k}}(j)$ if $\text{\sc c}$}){$\text{\sc c}$}{$\widehat{\mathbf{w}}_{\mathbf{k}}(j).w:=\QNaN{j}$}{$k:=\min\{k,j\}$}
      }
    }
    \leIf(\tcp*[f]{take the lowest surviving index or stop}){$k>|\widehat{\mathbf{w}}_{\mathbf{k}}|$}{\Break}{$i:=k$}
  }
  \caption{Computing a parallel ordering with $t>1$ tasks.}
  \label{a:4.2}
\end{algorithm}

\begin{example}\label{x:4.1}
  \looseness=-1
  Let $A$ be a matrix representation of the
  $\widehat{\mathbf{w}}_{\mathbf{k}}$ computed for some $G_{k-1}$,
  \begin{displaymath}
    \begin{matrix}
      A=\begin{bmatrix}
        \ast_1^{} & 15 & 10 & \hphantom{1}6 & \hphantom{1}3 & {\setlength{\fboxrule}{1.5pt}\fbox{\hphantom{1}1}}\\
        15 & \ast_2^{} & 14 & \hphantom{1}9 & {\setlength{\fboxrule}{1pt}\fbox{\hphantom{1}5}} & \hphantom{1}2\\
        10 & 14 & \ast_3^{} & \fbox{13} & \hphantom{1}8 & \hphantom{1}4\\
        \hphantom{1}6 & \hphantom{1}9 & \fbox{13} & \ast_4^{} & 12 & \hphantom{1}7\\
        \hphantom{1}3 & {\setlength{\fboxrule}{1pt}\fbox{\hphantom{1}5}} & \hphantom{1}8 & 12 & \ast_5^{} & 11\\
        {\setlength{\fboxrule}{1.5pt}\fbox{\hphantom{1}1}} & \hphantom{1}2 & \hphantom{1}4 & \hphantom{1}7 & 11 & \ast_6^{}
      \end{bmatrix},&\hfill&
      \begin{matrix}
        a_{pq}=l\iff\widehat{\mathbf{w}}_{\mathbf{k}}(l)=(w_{pq},p,q);\\\\
        PO_{\mathbf{k}}^1=((w_{16}^{},1,6),(w_{25}^{},2,5),(w_{34}^{},3,4)),\\
        w(PO_{\mathbf{k}}^1)=w_{16}^{}+w_{25}^{}+w_{34}^{},\\
        \mathbf{o}(PO_{\mathbf{k}}^1)=(1,5,13).
      \end{matrix}
    \end{matrix}
  \end{displaymath}
  Here, $a_{pq}=a_{qp}$ for $1\le p<q\le n_0=6$ is the index of
  $w_{pq}^{[\mathbf{k}]}=(w_{pq}^{},p,q)$ in
  $\widehat{\mathbf{w}}_{\mathbf{k}}$.  Then, $PO_{\mathbf{k}}^1$ is
  obtained by either Algorithm~\ref{a:4.1} or \ref{a:4.2}, with the
  pivot index pairs denoted in $A$ as well with the boxes of
  diminishing thickness, corresponding to the decreasing weights.
  Moreover, if all weights are the same (zeros, e.g.), then, by
  Definition~\ref{d:4.2}, $A$ always has the same form as above,
  enlarged or shrunk according to $n_0$, regardless of the actual
  elements in $G_{k-1}$.  Near the end of the Kogbetliantz process,
  where most weights are the same, the pivot strategy predictably
  selects many of the pivots in a pattern resembling the antidiagonal,
  and expanding towards the SE and the NW corners.
\end{example}

\addvspace{-5pt}%
The initial ordering of the elements of $\mathbf{w}_{\mathbf{k}}$ is
irrelevant mathematically.  However, to simplify parallelization of
the iteration over a triangular index space, i.e., over the indices of
the strictly upper triangle of $G_{k-1}$, a one-dimensional array
$\mathcal{I}$ of $\mathbf{n}=n_0(n_0-1)/2$ index pairs is pregenerated
as follows.  Let $i_+:=1$ and $i_-:=\mathbf{n}$.  For each index pair
$(p,q)$ in the column-cyclic order (column-by-column, and
top-to-diagonal within each column), if $J_0(p,p)=J_0(q,q)$ let
$\mathcal{I}(i_+):=(p,q)$ and $i_+:=i_++1$; else, let
$\mathcal{I}(i_-):=(p,q)$ and $i_-:=i_--1$.  This way $\mathcal{I}$ is
partitioned into two contiguous subarrays, $\mathcal{I}_+$ and
$\mathcal{I}_-$ (which might be empty), of the index pairs leading to
the trigonometric or to the hyperbolic transformations, respectively.
One parallel loop over $\mathcal{I}_+$ then computes all the weights
in the trigonometric case, and another parallel loop over
$\mathcal{I}_-$ does the same in the hyperbolic case.  Within each of
the loops all iterations are of equal complexity, what would not have
been the case if the two loops were fused into one, since computing
the weights is essentially faster in the trigonometric ($O(1)$ per
weight) than in the hyperbolic ($O(n_0)$ per weight) case.
\section{Overview of the algorithm}\label{s:5}
In this section the convergence criterion, a vital part of the
$J$-Kogbetliantz algorithm, is discussed in subsection~\ref{ss:5.1},
and the algorithm is summarized in subsection~\ref{ss:5.2}.
\subsection{Convergence criterion}\label{ss:5.1}
Traditionally, a convergence criterion for the Jacobi-like (including
the Kogbetliantz-like) processes is simple and decoupled from the
choice of a pivot strategy.  However, in the hyperbolic case, where
even a small off-diagonal norm can oscillate from one step to another,
a ``global'' stopping criterion, based on the fall of the off-diagonal
norm, relative to the initial one, below a certain threshold, or on
stabilization of the diagonal values, e.g., does not suffice alone for
stopping the process unattended.

The former approach may stop the process when the off-diagonal norm
has relatively diminished, but when there may still be some valid
transformations left that can both change the approximate singular
values and raise the off-diagonal norm.

On the other hand, if the threshold has been set too low, the process
may never (literally or practically) stop and may keep accumulating
the superfluous transformations computed almost exclusively from the
leftover rounding errors off the diagonal.

If a stopping criterion is solely based on observing that the diagonal
elements, i.e., the approximate singular values, have not changed at
all in a sequence of successive steps of a certain, predefined length,
a few conducted tests indicate that the computed singular values are
accurate in a sense of~\eqref{e:6.2} and the diagonal has converged to
its final value, but the singular vectors are not, with the relative
error~\eqref{e:6.1} of order $\sqrt{\varepsilon}$, where $\varepsilon$
is the machine precision, i.e., the transformations left unperformed
would have contributed to the singular vectors significantly, but not
to the singular values.

A ``local'' convergence criterion is thus needed, based on the
$2\times 2$ transformations in a multi-step belonging to a narrow
class of matrices, as shown in Algorithm~\ref{a:5.1}.

\begin{algorithm}[hbtp]
  \SetKw{Stop}{stop}
  $b:=0$\tcp*[r]{a counter of the \emph{big} steps}
  \ForEach(\tcp*[f]{assume the $k$th transformations have been computed}){$k\in\mathbf{k}$}{
    \leIf{{\rm$\widehat{G}_{k-1}$ was diagonal}}{$\text{\sc d}=\top$}{$\text{\sc d}=\bot$}
    \leIf{{\rm$\widehat{U}_k^{\ast}$ is identity}}{$\text{\sc u}=\top$}{$\text{\sc u}=\bot$}
    \leIf{{\rm$\widehat{V}_k$ is identity}}{$\text{\sc v}=\top$}{$\text{\sc v}=\bot$}
    $\text{\sc s}=\text{\sc d}\vee(\text{\sc u}\wedge\text{\sc v})$\tcp*[r]{$\text{\sc s}$ is $\top$ if the $k$th step is \emph{small}}
    \lIf(\tcp*[f]{else, the $k$th step is \emph{big}}){$\neg\text{\sc s}$}{$b:=b+1$}
  }
  \lIf(\tcp*[f]{halt if no \emph{big} steps in $\mathbf{k}$}){$b=0$}{\Stop}
  \caption{The convergence criterion, evaluated at each multi-step $\mathbf{k}$.}
  \label{a:5.1}
\end{algorithm}

The steps of each multi-step $\mathbf{k}$ are categorized as either
\emph{big} or \emph{small}.  A step is \emph{big} if its $2\times 2$
pivot submatrix is not diagonal, and either the left or the right
transformation is not identity; else, it is \emph{small}.  A
non-trivial \emph{small} step is just a scaling by the factors of unit
modulus and/or a swap of the diagonal elements, so it is a heuristic
but reasonable expectation that an absence of \emph{big} steps is an
indication of convergence.
\subsection{The $J$-Kogbetliantz algorithm}\label{ss:5.2}
\looseness=-1
The $J$-Kogbetliantz algorithm is summarized in Algorithm~\ref{a:5.2}.
Note that accumulating the left and the right singular vectors is
optional, and that $\Sigma$ is the diagonal of $G_N$.

\begin{algorithm}[hbtp]
  \SetKw{Break}{break}
  \KwIn{$G_0$ and $J_0$, preprocessed from $G$ and $J$, if necessary.}
  \KwOut{$N$, $U$, $\Sigma$, and $V^{-1}$, as described in section~\ref{s:1}.}
  $N:=0$\tcp*[r]{a counter of steps performed in the loop below}
  \Repeat(\tcp*[f]{the loop body is a multi-step $\mathbf{k}$}){convergence detected by Algorithm~\ref{a:5.1}}{
    compute the augmented weights $\mathbf{w}_{\mathbf{k}}$ as described in subsection~\ref{ss:3.2} and section~\ref{s:4}\;
    filter and sort $\mathbf{w}_{\mathbf{k}}$ to obtain $\widehat{\mathbf{w}}_{\mathbf{k}}$ as described in subsection~\ref{ss:4.2}\;
    \lIf(\tcp*[f]{terminate early, if possible}){$|\widehat{\mathbf{w}}_{\mathbf{k}}|=0$}{\Break}
    generate the parallel ordering $O_{\mathbf{k}}^1$ by Algorithm~\ref{a:4.2} (or by Algorithm~\ref{a:4.1} if $t=1$)\;
    \ForEach(\tcp*[f]{in parallel with $t$ threads}){$k\in\mathbf{k}$}{
      compute (or reuse) the $2\times 2$ HSVD of $\widehat{G}_{k-1}^{}$, i.e., $\widehat{U}_k^{\ast}$, $\widehat{\Sigma}_k^{}$, and $\widehat{V}_k^{}$, as in section~\ref{s:2}\;
      apply $\widehat{U}_k^{\ast}$ from the left to $U_{k-1}^{\ast}$ and $G_{k-1}^{}$ to obtain $U_k^{\ast}$ and $G_{k-1}'$, resp., as in section~\ref{s:3}\;
    }\ForEach(\tcp*[f]{in parallel with $t$ threads}){$k\in\mathbf{k}$}{
      apply $\widehat{V}_k^{}$ from the right to $G_{k-1}'$ and $V_{k-1}^{}$ to obtain $G_k^{}$ and $V_k^{}$, resp., as in section~\ref{s:3}\;
    }
    $N:=N+|\mathbf{k}|$\tcp*[r]{end of the multi-step $\mathbf{k}$}
  }
  compute $U=(U_N^{\ast})^{\ast}$ and $V^{-1}=J_0^{}V_N^{\ast}J_0^{}$\tcp*[r]{optionally}
  \caption{Overview of the $J$-Kogbetliantz algorithm.}
  \label{a:5.2}
\end{algorithm}
\section{Numerical testing}\label{s:6}
Testing was performed on the Intel Xeon Phi 7210 CPUs, running at
$1.3$~GHz with TurboBoost turned off in Quadrant cluster mode, with
$96$~GiB of RAM and $16$~GiB of flat-mode MCDRAM (which was not used),
under $64$-bit CentOS Linux $7.9.2009$ with the Intel compilers
(Fortran, C), version $19.1.3.304$, and the GNU compilers (Fortran,
C), version $9.3.1$, for the error checking.  No BLAS/LAPACK routines
from Intel Math Kernel Library were used in the final prototype
implementation.

The prototype code has been written in Fortran for the
\texttt{DOUBLE PRECISION} and \texttt{DOUBLE COMPLEX} datatypes, with
some auxiliary routines written in C\@.  The real and the complex
$J$-Kogbetliantz algorithms are implemented as two programs.  There
are also two error checkers in quadruple precision (Fortran's
\texttt{KIND=REAL128}), one which finds the absolute and then the
relative normwise error of the obtained HSVD as
\begin{equation}
  \|G_0-U\Sigma V^{-1}\|_F/\|G_0\|_F,
  \label{e:6.1}
\end{equation}
while the other compares $\Sigma J_0 \Sigma^T$ with the eigenvalues
$\Lambda$ of $H=G_0^{} J_0^{} G_0^{\ast}$, i.e.,
\begin{equation}
  \max_{1\le i\le n_0^{}}^{}|(\lambda_{ii}'-\sigma_{ii}^2 j_{ii}^{})/\lambda_{ii}'|,\quad
  \lambda_{11}'\ge\lambda_{22}'\ge\ldots\ge\lambda_{n_0^{}n_0^{}}',
  \label{e:6.2}
\end{equation}
where $\Lambda'=P_{\Lambda}^{}\Lambda P_{\Lambda}^T$ ($P_{\Lambda}$
being a permutation) has the eigenvalues on the diagonal sorted
descendingly to match the ordering of $\Sigma J_0 \Sigma^T$.  All
eigenvalues are non-zero.

The close-to-exact eigenvalues are known since each $H$ has been
generated by taking its double precision eigenvalues $\Lambda$
pseudorandomly from one of the ranges:
\begin{compactenum}
\item $\lambda\in\langle\epsilon,1]$, drawn uniformly from $\langle 0,1]$,
\item $|\lambda|\in\langle\epsilon,1]$, drawn uniformly from $[-1,1]$,
\item $|\lambda|\in\langle\epsilon,1]$, drawn from the normal variable
  $\mathcal{N}(\mu=0,\sigma=1)$,
\end{compactenum}
with a given
$\epsilon\in\{\epsilon_1=10^{-13},\epsilon_2=10^{-15}\}$.  Then,
$H=U\Lambda U^{\ast}$ (or $U\Lambda U^T$) is formed by applying
$n_0-1$ pseudorandom Householder reflectors to $\Lambda$ in extended
precision.

The Hermitian/symmetric indefinite factorization with complete
pivoting~\cite{Singer-DiNapoli-Novakovic-Caklovic-20,Slapnicar-98} of
$H$ gives $J_0$ and $G_0'$, which is rounded to a double
(complex/real) precision input $G_0$.  For each $n_0$ twelve pairs
$(G_0,J_0)$ have been generated, six each for the real and the complex
case.  In each case two pairs come with $J_0=I_{n_0}$, corresponding
to the first range above.  For a given $n_0$, $\epsilon$, and a range,
the eigenvalues of the real $H$ are the same as those of the complex
$H$, due to a fixed pseudo-RNG seed selected for that $\epsilon$.

\looseness=-1
For each field $T\in\{\mathbb{R},\mathbb{C}\}$, range $L\in\{1,2,3\}$
of the eigenvalues of $H$, and $\epsilon$ as above, a sequence of test
matrices was generated, with their orders ranging from $n_0=4$ to
$n_0=2048$ with a variable step: four up to $n_0=128$, eight up to
$n_0=256$, $16$ up to $n_0=512$, $32$ up to $n_0=1024$, and $256$
onwards.  For each $n_0$, the number of tasks for a run of the
$J$-Kogbetliantz algorithm was $t=\min\{64,n_0(n_0-1)/2\}$, since the
CPU has 64 cores, and to each core at most one task (i.e., an OpenMP
thread) was assigned by setting \texttt{OMP\_PLACES=CORES} and
\texttt{OMP\_PROC\_BIND=SPREAD} environment variables.

Let $\mathbf{N}$, $0\le\mathbf{N}\le N$, be the number of multi-steps
performed until convergence.  Then, define $\mathbf{C}$, the number of
`virtual' sweeps (also called cycles) performed, as
\begin{equation}
  \mathbf{C}=\mathbf{N}/(n_0-1).
  \label{e:6.3}
\end{equation}
A `virtual' sweep has at most the same number of steps as would a
`real' sweep by a cyclic pivot strategy have, i.e., $n_0(n_0-1)/2$,
but in it any transformation candidate can be transformed up to
$\lfloor n_0/2\rfloor$ times.  Note that $\mathbf{C}$ does not have to
be an integer.

In each subfigure of Figures~\ref{f:6.1}--\ref{f:6.3} there are three
data series, one for each $L$.  A data point in a series is the
maximum of a value from one run of the $J$-Kogbetliantz algorithm on a
matrix generated with $\epsilon=\epsilon_1$, and a value from another
run on a matrix generated with $\epsilon=\epsilon_2$, with all other
parameters (i.e., $T$, $L$, and $n_0$) being the same.
\begin{figure}[hbt]
  \begin{center}
    \includegraphics{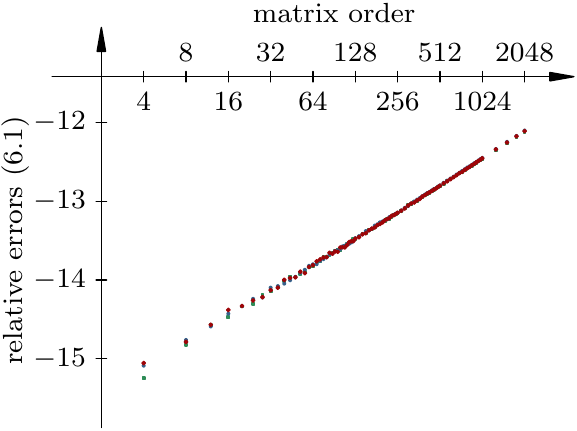}\hfill
    \includegraphics{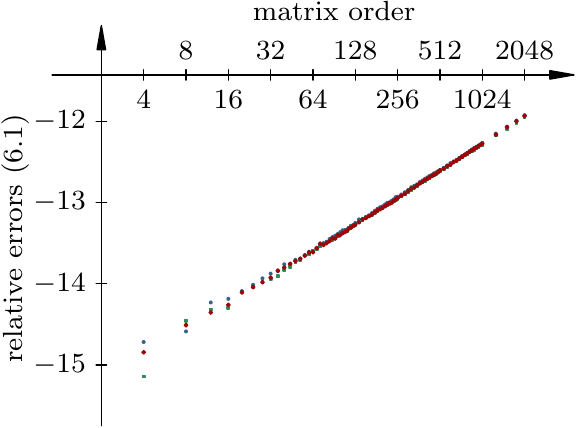}\\
    \includegraphics{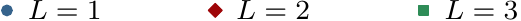}
  \end{center}
  \caption{The relative errors~\eqref{e:6.1}, in $\log_{10}$-scale, in
    the HSVD computed in \texttt{DOUBLE PRECISION} (left) and
    \texttt{DOUBLE COMPLEX} (right) datatypes.  The matrix orders on
    $x$-axis are in $\log_2$-scale.}
  \label{f:6.1}
\end{figure}
\begin{figure}[hbt]
  \begin{center}
    \includegraphics{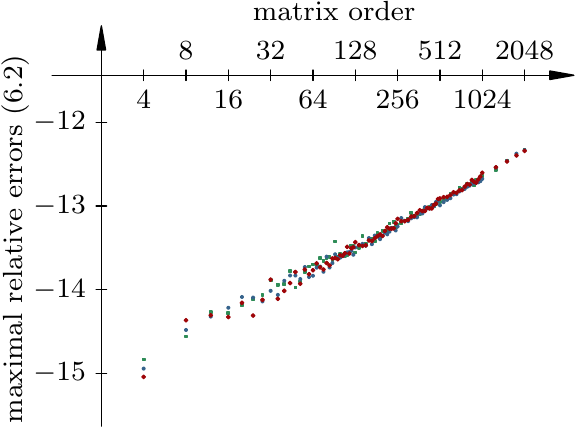}\hfill
    \includegraphics{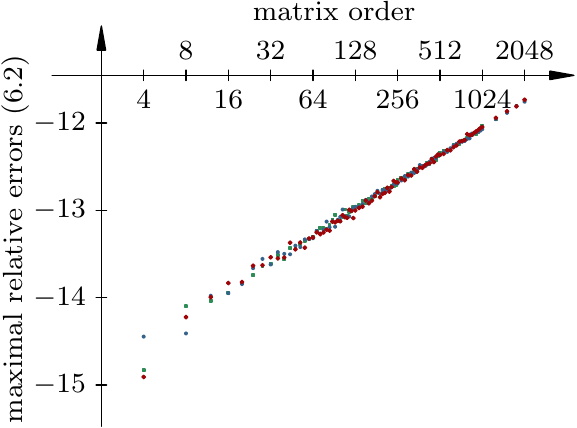}\\
    \includegraphics{fig7.pdf}
  \end{center}
  \caption{The maximal relative errors~\eqref{e:6.2}, in
    $\log_{10}$-scale, in the eigenvalues of $G_0^{}J_0^{}G_0^{\ast}$,
    with the $J_0$-HSVD of $G_0$ computed in \texttt{DOUBLE PRECISION}
    (left) and \texttt{DOUBLE COMPLEX} (right) datatypes.  The matrix
    orders on $x$-axis are in $\log_2$-scale.}
  \label{f:6.2}
\end{figure}
\begin{figure}[hbt]
  \begin{center}
    \includegraphics{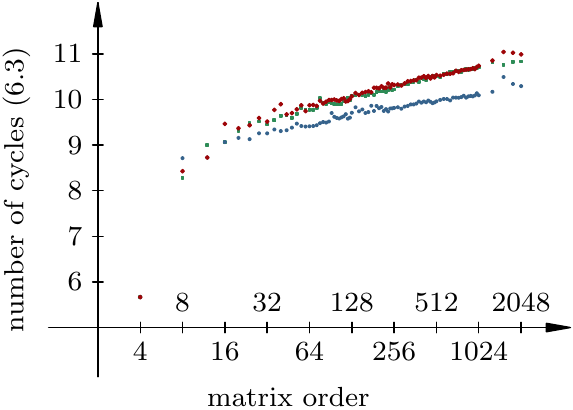}\hfill
    \includegraphics{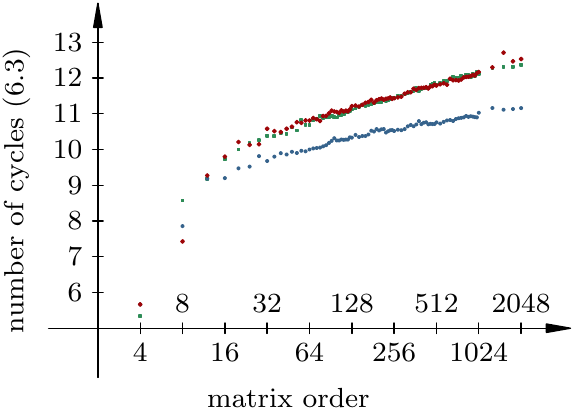}\\
    \includegraphics{fig7.pdf}
  \end{center}
  \caption{The number of cycles~\eqref{e:6.3} until convergence, when
    computing in \texttt{DOUBLE PRECISION} (left) and
    \texttt{DOUBLE COMPLEX} (right) datatypes.  The matrix orders on
    $x$-axis are in $\log_2$-scale.}
  \label{f:6.3}
\end{figure}

A comparison of the relative errors in the decomposition, shown in
Figure~\ref{f:6.1}, leads to a similar conclusion that can be reached
by comparing the maximal relative errors in the eigenvalues of $H$,
shown in Figure~\ref{f:6.2}.  In the real as well as in the complex
case a satisfactory accuracy, in a sense of both~\eqref{e:6.1} and
\eqref{e:6.2}, was reached in a reasonably small number of cycles, as
shown in Figure~\ref{f:6.3}.

A sequential (with $t=1$) variant of the algorithm, performing one
step at a time, was compared performance-wise against the parallel
multi-step one, with $\upsilon=0.75$ and $n_0$ going up to $128$ and
$256$ for $T=\mathbb{C}$ and $T=\mathbb{R}$, respectively.  The
sequential variant was drastically slowed down (up to more than three
orders of magnitude) compared to the multi-step one, especially for
the ``true'' HSVD (less so for the ``ordinary'' SVD), to a point of
being totally impractical.  The $J$-Kogbetliantz algorithm is
therefore best run in the multi-step regime, with as much parallelism
as possible.
\section{Conclusions and future work}\label{s:7}
In this paper an accurate method for computing the $2\times 2$ HSVD
of real and complex matrices is demonstrated and employed as one of
the three major building blocks of a $J$-Kogbetliantz algorithm for
general square matrices.  The other two important contributions are a
heuristic but efficient convergence criterion for all pointwise
Kogbetliantz-type processes and a modification of the well-established
dynamic pivot strategy that can cope with the pivot weights of
arbitrary magnitudes and signs.

To keep the exposition concise, a forward rounding error analysis of
the floating-point computation of the $2\times 2$ HSVD is left for
future work.  Furthermore, performing such analysis is slightly
impeded by, e.g., a lack of standardized, tight error bounds for the
absolute value of a complex number, or equivalently, of the
$\mathtt{HYPOT}$ intrinsic.

The $J$-Kogbetliantz algorithm, as presented, is not highly performant
even in its parallel form.  It is worth exploring if (and what kind
of) blocking of the algorithm would be beneficial in terms of
performance, without negatively affecting accuracy.  A straightforward
generalization of the dynamic pivot strategy to block (instead of
$2\times 2$) pivots seems too inefficient, as it would assume at each
block-multi-step the full diagonalization of all possible block pivots
that require hyperbolic transformations only to compute their weights.
Apart from---and complementary to---blocking, there are other options
for improving performance, like storing and reusing the $2\times 2$
HSVDs computed while forming a multi-step, as explained in
section~\ref{s:3}, and implementing a vectorized sorting routine for
the suitably represented augmented weights.

The batches of $2\times 2$ transformations in each multi-step could be
processed in a vectorized way, as in~\cite{Novakovic-20}, should a
highly optimized implementation be required.  Such a version of the
algorithm would have to resort to a specific vectorized routine in
each of the three cases of $2\times 2$ transformations (one
trigonometric and two hyperbolic, of lower and upper triangular
matrices).  Consequently, (up to) three disjoint batches would have to
be processed separately, with a non-trivial repacking of input data
for each of them.  A serious practical use-case is required to justify
such effort.

Finally, an interesting observation is offered without a proof.  For
complex matrices, the $J$-Kogbetliantz algorithm seems to converge (in
limit) not only in the sense of $\off(G_k)\to 0$ and
$\diag(G_k)\to\Sigma$, but also with
$\displaystyle\max_{i\ne j}|\arg(G_k)_{ij}|\to 0$, for $k\to\infty$.
\begin{acknowledgements}
  We are much indebted to Sa\v{s}a Singer$\null^{\dagger}$ for his
  suggestions on the paper's subject, and to the anonymous referee for
  significantly improving the presentation of the paper.
\end{acknowledgements}
\section*{Declarations}
\def\ackname{Funding}
\begin{acknowledgements}
  This work has been supported in part by Croatian Science Foundation
  under the project IP--2014--09--3670
  ``Matrix Factorizations and Block Diagonalization Algorithms''
  (\href{https://web.math.pmf.unizg.hr/mfbda/}{MFBDA}).
\end{acknowledgements}
\def\ackname{Conflicts of Interest}
\begin{acknowledgements}\addvspace{-14pt}%
  The authors have no conflicts of interest to declare that are
  relevant to the content of this article.
\end{acknowledgements}
\def\ackname{Availability of data}
\begin{acknowledgements}\addvspace{-14pt}%
  The input dataset is available at
  \url{https://pmfhr-my.sharepoint.com/:f:/g/personal/venovako_math_pmf_hr/EiBdopxcJfFHvCibaJrSpDoBL2hSMfGLREP0npJKu_bIKg}.
\end{acknowledgements}
\def\ackname{Code availability}
\begin{acknowledgements}\addvspace{-14pt}%
  The source code is available in
  \url{https://github.com/venovako/JKogb} repository.
\end{acknowledgements}
\def\ackname{Authors' contributions}
\begin{acknowledgement}\addvspace{-14pt}%
  The second author formulated the research topic, reviewed the
  literature, plotted the figures and proofread the manuscript.  The
  first author performed the rest of the research tasks.
\end{acknowledgement}
%

%
\appendix
\section{Proofs of Lemmas~\ref{l:3.1} and~\ref{l:3.2}}\label{s:A}
\begin{proof}[Lemma~\ref{l:3.1}]
  Let, for $1\le\ell\le n$, $\gamma_{\ell}=\arg(x_{\ell})$ and
  $\delta_{\ell}=\arg(y_{\ell})$.  Then,
  \begin{displaymath}
    \renewcommand{\arraycolsep}{3pt}
    \begin{bmatrix}x_{\ell}&y_{\ell}\end{bmatrix}=
    \begin{bmatrix}e^{\mathrm{i}\gamma_{\ell}}|x_{\ell}|&e^{\mathrm{i}\delta_{\ell}}|y_{\ell}|\end{bmatrix}=
    e^{\mathrm{i}\gamma_{\ell}}\begin{bmatrix}|x_{\ell}|&e^{\mathrm{i}(\delta_{\ell}-\gamma_{\ell})}|y_{\ell}|\end{bmatrix}=
    e^{\mathrm{i}\delta_{\ell}}\begin{bmatrix}e^{\mathrm{i}(\gamma_{\ell}-\delta_{\ell})}|x_{\ell}|&|y_{\ell}|\end{bmatrix}.
  \end{displaymath}
  Using the second equality, from the matrix multiplication it follows
  \begin{displaymath}
    x_{\ell}'=e^{\mathrm{i}\gamma_{\ell}^{}}(|x_{\ell}^{}|\cosh\psi+e^{\mathrm{i}(\delta_{\ell}^{}-\gamma_{\ell}^{}-\beta)}|y_{\ell}^{}|\sinh\psi),
  \end{displaymath}
  and using the third equality, from the matrix multiplication it follows
  \begin{displaymath}
    y_{\ell}'=e^{\mathrm{i}\delta_{\ell}^{}}(e^{\mathrm{i}(\gamma_{\ell}^{}-\delta_{\ell}^{}+\beta)}|x_{\ell}^{}|\sinh\psi+|y_{\ell}^{}|\cosh\psi).
  \end{displaymath}
  Since $|e^{-\mathrm{i}\gamma_{\ell}^{}}x_{\ell}'|=|x_{\ell}'|$ and
  $|e^{-\mathrm{i}\delta_{\ell}^{}}y_{\ell}'|=|y_{\ell}'|$ and
  $\cos(-\phi)=\cos\phi$, it holds
  \begin{displaymath}
    \begin{aligned}
      |x_{\ell}'|^2
      &=(|x_{\ell}^{}|\cosh\psi+\cos(\delta_{\ell}^{}-\gamma_{\ell}^{}-\beta)|y_{\ell}^{}|\sinh\psi)^2+(\sin(\delta_{\ell}^{}-\gamma_{\ell}^{}-\beta)|y_{\ell}^{}|\sinh\psi)^2\\
      &=|x_{\ell}^{}|^2\cosh^2\psi+|y_{\ell}^{}|^2\sinh^2\psi+\cos(\delta_{\ell}^{}-\gamma_{\ell}^{}-\beta)|x_{\ell}^{}||y_{\ell}^{}|2\cosh\psi\sinh\psi,\\
      |y_{\ell}'|^2
      &=(\cos(\gamma_{\ell}^{}-\delta_{\ell}^{}+\beta)|x_{\ell}^{}|\sinh\psi+|y_{\ell}^{}|\cosh\psi)^2+(\sin(\gamma_{\ell}^{}-\delta_{\ell}^{}+\beta)|x_{\ell}^{}|\sinh\psi)^2\\
      &=|x_{\ell}^{}|^2\sinh^2\psi+|y_{\ell}^{}|^2\cosh^2\psi+\cos(\delta_{\ell}^{}-\gamma_{\ell}^{}-\beta)|x_{\ell}^{}||y_{\ell}^{}|2\cosh\psi\sinh\psi.
    \end{aligned}
  \end{displaymath}
  After grouping the terms, the square of the Frobenius norm of the new
  $\ell$th row is
  \begin{equation}
    \begin{aligned}
      \left\|\begin{bmatrix}x_{\ell}' & y_{\ell}'\end{bmatrix}\right\|_F^2=
      |x_{\ell}'|^2+|y_{\ell}'|^2&=
      (\cosh^2\psi+\sinh^2\psi)(|x_{\ell}^{}|^2+|y_{\ell}^{}|^2)\\
      &+2\cos(\delta_{\ell}^{}-\gamma_{\ell}^{}-\beta)|x_{\ell}^{}||y_{\ell}^{}|2\cosh\psi\sinh\psi.
    \end{aligned}
    \label{e:A.1}
  \end{equation}
  Summing the left side of the equation~\eqref{e:A.1} over all $\ell$
  one obtains
  \begin{displaymath}
    \left\|\begin{bmatrix}\mathbf{x}'&\mathbf{y}'\end{bmatrix}\right\|_F^2=
    \sum_{\ell=1}^n\left(|x_{\ell}'|^2+|y_{\ell}'|^2\right),
  \end{displaymath}
  what is equal to the right side of the equation~\eqref{e:A.1},
  summed over all $\ell$,
  \begin{displaymath}
    \sum_{\ell=1}^n\left((\cosh^2\psi+\sinh^2\psi)(|x_{\ell}^{}|^2+|y_{\ell}^{}|^2)+2\zeta_{\ell}^{}|x_{\ell}^{}||y_{\ell}^{}|2\cosh\psi\sinh\psi\right),
  \end{displaymath}
  where
  $-1\le\zeta_{\ell}^{}=\cos(\delta_{\ell}^{}-\gamma_{\ell}^{}-\beta)\le 1$,
  so $|\zeta_{\ell}^{}|\le 1$.  The last sum can be split into a
  non-negative part and the remaining part of an arbitrary sign,
  \begin{displaymath}
    (\cosh^2\psi+\sinh^2\psi)\sum_{\ell=1}^n(|x_{\ell}^{}|^2+|y_{\ell}^{}|^2)+
    2\cosh\psi\sinh\psi\sum_{\ell=1}^n 2\zeta_{\ell}^{}|x_{\ell}^{}||y_{\ell}^{}|.
  \end{displaymath}
  Using the triangle inequality, and observing that
  $\sum_{\ell=1}^n(|x_{\ell}^{}|^2+|y_{\ell}^{}|^2)=\left\|\begin{bmatrix}\mathbf{x}&\mathbf{y}\end{bmatrix}\right\|_F^2$,
  this value can be bounded above by
  \begin{displaymath}
    (\cosh^2\psi+\sinh^2\psi)\left\|\begin{bmatrix}\mathbf{x}&\mathbf{y}\end{bmatrix}\right\|_F^2+
    2\cosh\psi|\sinh\psi|\sum_{\ell=1}^n 2|x_{\ell}^{}||y_{\ell}^{}|,
  \end{displaymath}
  and below by
  \begin{displaymath}
    (\cosh^2\psi+\sinh^2\psi)\left\|\begin{bmatrix}\mathbf{x}&\mathbf{y}\end{bmatrix}\right\|_F^2-
    2\cosh\psi|\sinh\psi|\sum_{\ell=1}^n 2|x_{\ell}^{}||y_{\ell}^{}|,
  \end{displaymath}
  where both bounds can be simplified by the identities
  \begin{displaymath}
    \cosh^2\psi+\sinh^2\psi=\cosh(2\psi),\qquad
    2\cosh\psi|\sinh\psi|=|\sinh(2\psi)|.
  \end{displaymath}
  By the inequality of arithmetic and geometric means it holds
  $2|x_{\ell}^{}||y_{\ell}^{}|\le(|x_{\ell}^{}|^2+|y_{\ell}^{}|^2)$,
  so a further upper bound is reached as
  \begin{displaymath}
    \cosh(2\psi)\left\|\begin{bmatrix}\mathbf{x}&\mathbf{y}\end{bmatrix}\right\|_F^2+
    |\sinh(2\psi)|\left\|\begin{bmatrix}\mathbf{x}&\mathbf{y}\end{bmatrix}\right\|_F^2,
  \end{displaymath}
  and a further lower bound as
  \begin{displaymath}
    \cosh(2\psi)\left\|\begin{bmatrix}\mathbf{x}&\mathbf{y}\end{bmatrix}\right\|_F^2-
    |\sinh(2\psi)|\left\|\begin{bmatrix}\mathbf{x}&\mathbf{y}\end{bmatrix}\right\|_F^2,
  \end{displaymath}
  what, after grouping the terms and dividing by
  $\left\|\begin{bmatrix}\mathbf{x}&\mathbf{y}\end{bmatrix}\right\|_F^2$,
  concludes the proof.
  \qed%
\end{proof}
\begin{proof}[Lemma~\ref{l:3.2}]
  Note that $\cosh(2\psi)+|\sinh(2\psi)|\ge\cosh(2\psi)\ge 1$, and
  \begin{displaymath}
    \begin{aligned}
      1&=\cosh^2(2\psi)-\sinh^2(2\psi)\\
      &=(\cosh(2\psi)-|\sinh(2\psi)|)\cdot(\cosh(2\psi)+|\sinh(2\psi)|)\\
      &\ge\cosh(2\psi)-|\sinh(2\psi)|>0.
    \end{aligned}
  \end{displaymath}
  If $\psi=0$, the equalities in the bounds established in
  Lemma~\ref{l:3.1} hold trivially.  Also, if both equalities hold
  simultaneously, $\psi=0$.

  The inequality of arithmetic and geometric means in the proof of
  Lemma~\ref{l:3.1} turns into equality if and only if
  $|x_{\ell}|=|y_{\ell}|$ for all $\ell$.  When
  $|x_{\ell}||y_{\ell}|\ne 0$, it has to hold
  $\zeta_{\ell}=\zeta$, where $\zeta=\pm\sign(\sinh\psi)$, to reach
  the upper or the lower bound, respectively.  From $\zeta=\pm 1$ it
  follows $\delta_{\ell}=\gamma_{\ell}+\beta+l\pi$ for a fixed
  $l\in\mathbb{Z}$, i.e.,
  $x_{\ell}=e^{\mathrm{i}\gamma_{\ell}}|x_{\ell}|$ and
  $y_{\ell}=\pm e^{\mathrm{i}\beta}e^{\mathrm{i}\gamma_{\ell}}|x_{\ell}|$,
  so $y_{\ell}=\pm e^{\mathrm{i}\beta}x_{\ell}$ for all $\ell$.
  Conversely, $\mathbf{y}=\pm e^{\mathrm{i}\beta}\mathbf{x}$ implies,
  for all $\ell$, that $|x_{\ell}|=|y_{\ell}|$ and $\zeta_{\ell}$ is a
  constant $\zeta=\pm 1$, so one of the two bounds is reached.
  \qed%
\end{proof}
\def\ackname{Sanja Singer\hskip-2pt}
\begin{acknowledgement}
  (1963--2021) was a tenured professor of Mathematics at the Faculty
  of Mechanical Engineering and Naval Architecture, University of
  Zagreb, Croatia.  She received a Ph.D.\ in Mathematics in 1997 from
  the same University.  Her research was focused on accurate and
  high-performance algorithms of numerical linear algebra, especially
  matrix factorization algorithms, and eigenvalue and singular value
  algorithms for dense matrices.  She educated many generations of
  engineers and mathematicians, introduced several undergraduate and
  graduate courses in numerical mathematics and parallel computing,
  and supervised two grateful doctoral students, while selflessly
  helping countless others.
\end{acknowledgement}
\end{document}